\newcommand{\supp}{\operatorname{supp}}
\newcommand{\cl}{\operatorname{cl}}
\newcommand{\Rr}{{\mathbb{R}}}
\newcommand{\Zz}{{\mathbb{Z}}}
\newcommand{\Tt}{{\mathbb{T}}}
\newcommand{\Hh}{{\overline{H}}}
\newcommand{\Mm}{{\mathcal{M}}}
\theoremstyle{definition}
\newtheorem{theorem}{Theorem} \newtheorem{corollary}[theorem]{Corollary}
\newtheorem{remark}{Remark}
\newtheorem{lemma}[theorem]{Lemma}
 \newtheorem{proposition}[theorem]{Proposition}
\theoremstyle{definition}
\begin{document}

\title{Mather problem and viscosity solutions in the stationary setting}

\author{Diogo A.  Gomes and Elismar R. Oliveira}
\date{\today}
\maketitle

\thanks{D. Gomes was
partially supported by CAMGSD/IST through FCT Program POCTI - FEDER
and by grants POCI/FEDER/MAT/55745/2004, DENO/FCT-PT (PTDC/EEA-ACR/67020/2006). Elismar R. Oliveira was supported by POCI - FEDER/MAT/55745/2004 program.}

\begin{abstract}
In this paper we discuss the  Mather problem for stationary Lagrangians, that is 
Lagrangians $L:\Rr^n\times \Rr^n\times \Omega\to \Rr$, 
where $\Omega$ is a compact metric space on which $\Rr^n$ acts through an action which leaves $L$ invariant. 
This setting allow us to generalize the standard Mather problem for quasi-periodic and almost-periodic Lagrangians.
Our main result is 
the existence of stationary Mather measures invariant under the Euler-Lagrange flow which are supported in a graph. 
We also obtain several estimates for viscosity solutions of Hamilton-Jacobi equations for the discounted cost 
infinite horizon problem. 
\end{abstract}
 
\section{Introduction}
Let $M$ be  a complete compact manifold, and $L:TM \to \Rr$ a $C^{3}$ Lagrangian, fiberwise strictly convex and  coercive.
A probability measure on $TM$ is called holonomic if 
\[
\int_{TM} v\cdot D\varphi d\mu=0,
\]
for all $\varphi \in C^1(M)$.
A central
 result in Aubry-Mather theory
\cite{Mane5} (see also \cite{F6}), 
is the fact that any holonomic probability measure $\mu$ on $TM$ 
which minimizes the action $\int_{TM} L d\mu$ is supported on a Lipschitz graph and is
invariant under the Euler-Lagrange flow.
Certain results in Aubry-Mather theory have been extended for non-compact manifolds, see for instance \cite{FathiMad} or \cite{Maderna},
but as far as the authors know, there is in the literature no satisfactory construction of Mather measures for general non-compact manifolds.

In this paper, rather than considering Lagrangians on the tangent bundle of compact manifolds, such as in 
the original paper of Mather \cite{Mather}, we consider
Lagrangians defined on  $\Rr^n\times \mathbb{R}^{n}\times\Omega$, where $\Omega$ is a suitable compact metric space
on which $\Rr^n$ acts trough an action $\tau_x$. 
The main result of this paper is Theorem \ref{InvarianceFlowMeasures}, in which
we establish the existence of stationary Mather measures invariant under the Euler-Lagrange flow. 

Stationary ergodic problems were considered in \cite{LS} in the context of homogenization of random stationary ergodic Hamilton-Jacobi equations. The authors (in particular DG) are thankful to 
several enlightening discussions with P. Souganidis on this issue.  Generalized Mather measures for stationary ergodic problems were
also considered in the homogenization setting in \cite{GV}. The stationary ergodic setting was consider in \cite{DS} where the 
construction of critical (or critical approximate) viscosity solutions of Hamilton-Jacobi equations is carried out in 
detail for the one-dimensional case.  

A simple example (taken from \cite{LS}) which illustrates the main difficulties in the stationary setting is the Lagrangian
\[
 L=\frac{|v|^2}{2}-\cos(x+\omega_1)-\cos(\sqrt{2} x+\omega_2). 
\]
Consider $\omega\in \Rr^2/\Zz^2\equiv \Tt^2$ as a fixed parameter.
It would be natural, 
as in Mather's problem, to look for probability measures $\mu$ on $\Rr^n\times \Rr^n$ which minimize the action 
\begin{equation}
\int_{\Rr^n\times \Rr^n} L d\mu
\label{choose} 
\end{equation}
under the holonomy constraint
\[
\int_{\Rr^n\times \Rr^n} v \cdot D_x\varphi d\mu=0, 
\]
for all $\varphi$ of class $C^1$, bounded with bounded derivatives. This problem can be solved explicitly, and in fact we
have the following two cases: if there exists a solution $\bar x$ to the overdetermined system
\[
\bar x+\omega_1=2\pi n, \qquad \sqrt{2} \bar x+\omega_2=2\pi n, 
\]
for some $n\in \Zz$,  
the Mather measure on $\Rr \times \Rr $ is simply $\mu_0=\delta_{\bar x}(x) \delta_0(v)$; otherwise there does not exist a Mather measure since $L > -1$ for all $(x,v)$, and the infimum in \eqref{choose} is easily shown to be -1.

To overcome these issues, which are due to the lack of compactness of $\Rr^n$, we will instead define
stationary Mather measures as measures on $(v, \omega)\in \Rr^n\times \Omega$, which minimize the action and satisfy a suitable holonomy
condition. It turns out that if $\Omega$
is compact and the Lagrangian satisfies certain stationarity hypothesis this is the natural way to generalize Mather measures. Before
proceeding, we must make precise our framework.

Let $\Omega$ be a compact metric space, and let
$L=L(x, v, \omega): \mathbb{R}^{n}  \times \mathbb{R}^{n} \times \Omega \to \mathbb{R}$  be a continuous Lagrangian, $C^3$ in the first two coordinates.
The Lagrangian $L$ is also required to be strictly convex and superlinear on the velocity $v$, and nonnegative.  In our setting,
this last condition can be achieved without changing of the nature the problem by  adding a constant to $L$. We assume further that 
\begin{equation}
\label{lipep}
L(x+y, v, \omega)-L(x, v, \omega)\leq |y|\left(C+C L(x, v, \omega)\right). 
\end{equation}

We suppose that there exists an action $\tau: \Omega \times \mathbb{R}^{n} \to  \Omega$ which is continuous, satisfies the semigroup property
$$\tau_{x+y}\omega=\tau_{x } \tau_{y}\omega \text{ and }\tau_{0}(\cdot)=Id.$$
Since $\Omega$ is compact and the action is continuous, the action is
uniformly transitive\footnote{The authors are grateful to Albert Fathi that pointed out to us that uniform transitivity holds
under the compactness assumption.} in the following sense:
  $$\forall \varepsilon >0, \exists M>0, \forall  \omega_{1},  \omega_{2} \in \Omega, \exists z \in \mathbb{R}^{n}, \text{ such that } |z| <M \text{, and } d(\tau_{z} \omega_{1},  \omega_{2}) < \varepsilon.$$

A first example of such an action is the following: we take $\Omega=\Tt^d$, the $d$-dimensional torus, let $n<d$ and we will
construct an action
 $\tau : \mathbb{R}^n  \times \mathbb{T}^{d}  \to \mathbb{T}^{d}$. To start with, we identify the torus $\Tt^d$ with its universal covering
$\Rr^d$, and consider a constant coefficient $d\times n$ matrix $A$. Assume that $\{Ax: x\in \Rr^n\}$ is dense in $\Tt^n$.
Then we define
\[
\tau_{x}\omega=\omega+A x.
\]

A second example is the following. We take $\Omega$ to be the space of all sequences $\omega=(\omega_k)$ on $\Tt^1$, endowed with the following metric:
\[
d(\omega, \tilde \omega)=\sum_{k=1}^\infty 2^{-k} |\omega_k-\tilde \omega_k|. 
\]
It is simple to verify that with this distance the space $\Omega$ is compact. A sequence $\lambda$
of real numbers
 is called irrational if for any $N$ the vector $(\lambda_1, \hdots, \lambda_N)$
is is linearly independent over the integers. Let $\bar \lambda$ be an irrational sequence. Define the following action from $\Rr$ into $\Omega$ by
\[
\tau_x \omega=\omega + x \bar \lambda. 
\]
This action is also uniformly transitive. 


A  function $\varphi :\mathbb{R}^{n}\times\Rr^n \times \Omega \to \mathbb{R}$,  is stationary if $$\varphi(x+y, v, \omega) = \varphi(x,v, \tau_{y} (\omega)), \; \forall x, y \in \mathbb{R}^{n}, \; \omega \in \Omega.$$
We assume that the Lagrangian $L$ is stationary. 

Denote 
\begin{align*}
C^{1}_{s}(\mathbb{R}^{n} \times \Omega)= \{ &\varphi :\mathbb{R}^{n} \times \Omega \to \mathbb{R}, \; \text{stationary, }  \; C^{1} \;  \text{in the first variable,}  \text{ continuous in} \; \\& \omega \text{, and such that } D_{x}\varphi(0, \omega)  \; \text{is continuous in} \; \omega\}, 
\end{align*}
with an analogous definition for $C^1_s(\Rr^n\times \Rr^n\times \Omega)$. 

If the action is given as in the first example by
$
\tau_{x}\omega=\omega+A x,
$ 
given $\psi: \Tt^d\to \Rr$, the function $\varphi(x, \omega)=\psi(\omega+Ax)$ is stationary, and, furthermore, $\varphi \in C^1_s$ if
$\psi$ is $C^1$. In the second example we can construct an example of a stationary function in the following way: let $\psi_k: \Tt\to \Rr$ be a sequence of periodic functions
uniformly bounded in $k$. 
Let
\[
\varphi(x, \omega)=\sum_k \psi_k(\omega_k+\bar\lambda_k x) 2^{-k} \frac 1 {1+|\bar \lambda_k|}
\]
Furthermore, if $\psi_k$ is $C^1$ and its derivatives are uniformly bounded in $k$, $\varphi\in C^1_s$. 

To motivate the stationary Mather problem,
let $x(t)$ be a globally Lipschitz trajectory on $\Rr^n$. Let $\omega_0\in \Omega$ is an arbitrary point.
Consider 
ergodic averages to define an occupation measure $\mu$ on $\Rr^n\times \Omega$ corresponding to $x(\cdot)$ in the following way
\begin{align*}
\lim_{T \to \infty} \frac{1}{T} \int_{0}^{T} \phi(x,\dot{x}, \omega_0)  dt &= \lim_{T \to \infty} \frac{1}{T} \int_{0}^{T} \phi(0,\dot{x},\tau_{x}\omega_0)  dt  \equiv \int_{\mathbb{R}^{n} \times \Omega}  \phi (0, v, \omega) d\mu,
\end{align*}
where the limit is taken trough an appropriate sequence.  Of course, the measure $\mu$ could depend on the point $\omega_0$ 
or the sequence through which the limit is taken. Nevertheless, 
such probabilities $\mu$, satisfy an integral constraint, the holonomy condition:
\begin{equation}
\label{hnom}
\int_{\mathbb{R}^{n} \times \Omega}  v \cdot D_{x}\varphi(0, \omega)  d\mu =0,
\end{equation}
for any stationary function $\varphi \in C^{1}_{s}(\mathbb{R}^{n}\times\Omega)$. 

The stationary Mather problem can be formulated as follows: minimize 
$$\int_{\mathbb{R}^{n} \times \Omega} L(0, v, \omega) d\mu (v, \omega),$$
over all probability measures that satisfy the holonomy constraint \eqref{hnom}.
A minimizing measure for this problem is called a stationary Mather measure. 
A similar problem arises also in \cite{GV} for the homogenization of Hamilton-Jacobi equations. 

Let $\gamma:\mathbb{R}^{n}  \to \mathbb{R}$ be a positive function 
such that \begin{equation}
\label{gammadef}
\lim_{\vert v \vert \to \infty} \frac{\vert v \vert}{\gamma(v)} =0, \; \text{and} \lim_{\vert v \vert \to \infty} \frac{L(0, v, \omega)}{\gamma(v)} =+\infty, 
\end{equation}
where the last limit is uniform in $\omega \in \Omega$ by compactness.
We denote by $C_\gamma^0(\mathbb{R}^{n} \times \Omega)$ the set 
 of the continuous
functions $\phi$ with
$$\|\phi\|_\gamma=\sup_{\mathbb{R}^{n} \times \Omega } \frac{|\phi(v, \omega)|}{\gamma(v)}<\infty, \qquad
\lim_{|v|\rightarrow \infty}\frac{ \vert \phi(v, \omega)  \vert}{\gamma(v)}\rightarrow 0.$$

We will need also to consider the discounted Mather problem, see \cite{GomesSelection} for a discussion of related generalizations of Mather's problem. For that, let $\alpha$ be a positive number. 
Consider the operator $A: C^{1}_{s}(\mathbb{R}^{n} \times \Omega)  \to C_\gamma^0(\mathbb{R}^{n} \times \Omega)$ given by
$$ \varphi \to A^{v} \varphi (\omega) =v \cdot D_{x}\varphi(0, \omega) - \alpha \varphi(0, \omega).$$

The discounted stationary Mather problem consists in minimizing
$$\int_{\mathbb{R}^{n} \times \Omega} L(0, v, \omega) d\mu (v, \omega)$$ 
over all probability measures that satisfy
the discounted holonomy constraint 
\begin{equation}
\label{dhnom}
\int_{\mathbb{R}^{n} \times \Omega} A^{v} \varphi (\omega)  d\mu (v, \omega)=- \alpha \int_{ \Omega} \varphi(0, \omega) d\nu (\omega),
\end{equation}
 for all $\varphi \in C^{1}_{s}(\mathbb{R}^{n} \times \Omega)$.
A minimizing probability measure for this problem is called a discounted stationary Mather measure. The measure $\nu$ is called the trace of $\mu$.
If $\alpha=0$ we call these measures stationary Mather measures. 

The main result of this paper 
is the construction of
 stationary Mather measures invariant under the Euler-Lagrange flow. Usually, this flow is defined in $\Rr^n\times \Rr^n$. However, since
the stationary Mather measures are measures on $\Rr^n\times \Omega$ we must now discuss the natural extension of the Euler-Lagrange flow to this space. 

Given a stationary vector field $W: \mathbb{R}^{n} \times  \mathbb{R}^{n} \times \Omega \to \mathbb{R}^{n} \times \mathbb{R}^{n}$, let $\Phi=(\Phi_{1}, \Phi_{2}):  \mathbb{R}\times \mathbb{R}^{n}  \times \mathbb{R}^{n} \times \Omega \to  \mathbb{R}^{n} \times  \mathbb{R}^{n}$ be its flow. 
We define the flow $\Psi:\Rr\times \Rr^n\times \Omega\to \Rr^n\times \Omega$ 
induced by $W$ in $\mathbb{R}^{n} \times \Omega$ as 
$$\Psi(t, v, \omega)=(\Phi_{2}(t, 0, v, \omega), \tau_{\Phi_{1}(t, 0, v, \omega)} \omega).$$
We denote by $C^1_b(\Rr^n\times \Omega)$ the set of bounded continuous functions $\phi(v, \omega)$ in $\Rr^n\times \Omega$ such that $D_v\phi(v, \omega)$ is also continuous 
and bounded. 
A measure $\mu$ is invariant under the flow $\Psi$ if, 
$$\int_{\mathbb{R}^{n} \times \Omega} \phi ( \Psi(t, v, \omega) ) d\mu(v, \omega) = \int_{\mathbb{R}^{n} \times \Omega} \phi(v, \omega) d\mu(v, \omega),$$
for all $\phi \in C^1_b(\mathbb{R}^{n} \times \Omega)$ and for all $t \in \mathbb{R}$. 

Let $\mu$ be a  measure in $\mathbb{R}^{n} \times \Omega$ and  $W: \mathbb{R}^{n} \times  \mathbb{R}^{n} \times \Omega \to \mathbb{R}^{n} \times \mathbb{R}^{n} $ be a stationary vector field in $\mathbb{R}^{n} \times \mathbb{R}^{n}$.  Then $\mu$ is invariant under the flow induced by $W$ in $\mathbb{R}^{n} \times \Omega$, if and only if, 
\begin{equation} \int_{\mathbb{R}^{n} \times \Omega} \nabla \hat{\phi}(0, v, \omega) \cdot W(0, v, \omega) d\mu(v, \omega) = 0,\label{InvCond} \end{equation}
(where the gradient in the previous formula is taken both in $x$ and $v$) for all  $\hat{\phi} \in \hat{C}_\gamma^0(\mathbb{R}^{n} \times \mathbb{R}^{n} \times \Omega)$.
A proof for this classical fact for the case of vector fields on a manifold $M$ can be found, for instance,
in \cite{BG}. The proof in our setting follows exactly along
the same lines and we will omit it. 


In this paper we will need to consider the discounted Lagrangian $L_\alpha\equiv e^{-\alpha t} L(x, v)$. The corresponding Euler-Lagrange equation is
\begin{equation}
\label{discountel}
  \frac{d  }{d t} D_vL(x, v, \omega) = D_xL(x, v, \omega)+\alpha D_vL,
\end{equation}
for each $\omega \in \Omega$.
For $\alpha=0$ we obtain the usual Euler-Lagrange equations. 
We have a $\omega$-parametric Lagrangian vector field $W^{L_\alpha}$, that is given by:
$$ W^{L_\alpha}=
\begin{cases}
X^{L_\alpha}(x, v, \omega)= v      \\
Y^{L_\alpha}(x, v, \omega)= (D^2_{vv}L)^{-1}(D_xL +\alpha D_vL- D_{xv}L v).
\end{cases} $$
We say that a  measure $\mu$ in $\mathbb{R}^{n} \times \Omega$ is invariant under the Euler-Lagrange flow if it is invariant under the flow $\Psi^\alpha$ induced by $W^{L_\alpha}$ in $\mathbb{R}^{n} \times \Omega$.

The outline of this paper is as follows: in section \ref{dualsec} we describe briefly the duality theory for the stationary Mather problem and its connections with viscosity solutions
of Hamilton-Jacobi equations. The proofs of some the results, since they are standard, are outlined for completeness in appendix \ref{apA}.
In section \ref{sfc} we make some formal computations in the spirit of \cite{EGom1}. These computations suggest that for certain discounted stationary Mather
measures  one may be able to extend the regularity results in \cite{EGom1}. Holonomic discounted stationary Mather measures are constructed in section \ref{hdmm}.  
Using these measures we obtain regularity results for viscosity solutions
in section \ref{graph}.  These imply that the discounted stationary Mather measures are supported in a (partially) Lipschitz graph
whose Lipschitz constant is independent of the discount factor $\alpha$. Finally in the last section
we construct stationary Mather measures invariant under the Euler-Lagrange flow. 


\section{Duality and viscosity solutions}
\label{dualsec}

The stationary Mather problem is an infinite dimensional linear programming problem. As usual in these problems (see \cite{GomesSelection}, for instance), the duality theory plays an important role and will be
developed in this section. 
\begin{theorem}\label{Duality}
Let $\nu$ be a probability measure on $\Omega$ and $\alpha \geq 0$. Define
\begin{equation}
\label{hbaralf}
\Hh_\alpha=\inf \int_{\mathbb{R}^{n} \times \Omega} L(0, v, \omega) d\mu (v, \omega),
\end{equation}
where the infimum is taken over all probability measures on $\mathbb{R}^{n} \times \Omega$ which satisfy the discounted holonomy condition 
\eqref{dhnom}.
Let
\[
\mathcal{H}(\varphi, x, \omega)= \sup_{ v \in\mathbb{R}^{n}}  ( - A^{v}(\varphi)(x, \omega) - L(x, v, \omega))= H(x, D_{x}\varphi(x, \omega), \omega) +  \\ \alpha \varphi(x, \omega),
\]
with
$H(x, p, \omega) = \sup_{ v \in\mathbb{R}^{n} } ( - p\cdot v  - L(x, v, \omega))$.

Then, the infimum in \eqref{hbaralf} is achieved at some probability measure $\mu$ satisfying \eqref{dhnom} and furthermore
\begin{equation}
\label{dualf}
\Hh_\alpha = - \inf_{ \varphi \in C^{1}_{s}  } \sup_{ \Omega}  \left\lbrace -\alpha \int_{\Omega} \varphi  d\nu  +  \mathcal{H}(\varphi, 0, \omega) \right\rbrace. 
\end{equation}
\end {theorem}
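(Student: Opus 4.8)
The plan is to treat Theorem \ref{Duality} as an infinite-dimensional linear program and to establish two assertions: that the infimum in \eqref{hbaralf} is attained, and that there is no duality gap. Combined with the elementary ``weak duality'' inequality, these yield \eqref{dualf}.

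\emph{Attainment.} First, the feasible set is nonempty with finite value: the measure $\mu_0=\delta_0(v)\otimes\nu(\omega)$ satisfies \eqref{dhnom} because $A^0\varphi(\omega)=-\alpha\varphi(0,\omega)$, and $\int L\,d\mu_0=\int_\Omega L(0,0,\omega)\,d\nu\le\max_\Omega L(0,0,\cdot)<\infty$, so $0\le\Hh_\alpha<\infty$. Take a minimizing sequence $\mu_k$ with $\int L\,d\mu_k\le\Hh_\alpha+1/k$. Splitting the integral at a large radius and using the uniform superlinearity \eqref{gammadef}, the action bound forces $\sup_k\mu_k(\{|v|\ge R\})\to0$ as $R\to\infty$; together with the compactness of $\Omega$ this gives tightness, so by Prokhorov's theorem a subsequence converges narrowly to a probability measure $\mu$. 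Since $A^v\varphi(\omega)=v\cdot D_x\varphi(0,\omega)-\alpha\varphi(0,\omega)$ grows at most linearly in $v$ while $|v|/\gamma(v)\to0$, the functions $A^v\varphi$ are uniformly integrable along $\{\mu_k\}$, so \eqref{dhnom} passes to the limit and $\mu$ is feasible; and since $L\ge0$ is continuous, $\mu\mapsto\int L\,d\mu$ is lower semicontinuous for narrow convergence (truncate $L$ at level $m$, use the Portmanteau lemma, let $m\to\infty$), hence $\int L\,d\mu\le\Hh_\alpha$ and $\mu$ is optimal.

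\emph{Weak duality.} For a feasible $\mu$ and any $\varphi\in C^1_s$, adding the vanishing constraint term gives $\int L\,d\mu=\int\big(L(0,v,\omega)+A^v\varphi(\omega)\big)\,d\mu+\alpha\int_\Omega\varphi(0,\omega)\,d\nu$. By the very definition of $\mathcal{H}$ one has $L(0,v,\omega)+A^v\varphi(\omega)\ge-\mathcal{H}(\varphi,0,\omega)\ge-\sup_\Omega\mathcal{H}(\varphi,0,\cdot)$, so $\int L\,d\mu\ge-\sup_\Omega\mathcal{H}(\varphi,0,\cdot)+\alpha\int_\Omega\varphi\,d\nu$; taking the infimum over feasible $\mu$ and the supremum over $\varphi$ gives $\Hh_\alpha\ge-\inf_{\varphi\in C^1_s}\sup_\Omega\{-\alpha\int_\Omega\varphi\,d\nu+\mathcal{H}(\varphi,0,\omega)\}$.

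\emph{No duality gap, and the main obstacle.} The reverse inequality is the crux. Introducing a multiplier $c\in\Rr$ for the mass constraint, one has
\[
\Hh_\alpha=\inf_{\mu\ge0}\ \sup_{c,\varphi}\ \Big[\int\big(L-c-A^v\varphi\big)\,d\mu+c-\alpha\int_\Omega\varphi\,d\nu\Big];
\]
after interchanging $\inf$ and $\sup$, the inner infimum over positive measures is finite only if $L(0,v,\omega)-c-A^v\varphi(\omega)\ge0$ for all $(v,\omega)$, and maximizing $c$ under this pointwise constraint --- then substituting $-\varphi$ for $\varphi$ --- reproduces exactly the right-hand side of \eqref{dualf}. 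The interchange is justified by the Fenchel--Rockafellar duality theorem (equivalently, a Sion-type minimax argument): the primal is feasible with finite value (the point $\mu_0$ above), and the uniform superlinearity \eqref{gammadef} makes $\{\mu:\int L\,d\mu\le M\}$ weak-$*$ compact, which furnishes the convexity and lower semicontinuity at the origin of the primal value function (under perturbations of the right-hand side of \eqref{dhnom}) needed for strong duality. The genuinely delicate point is exactly here: the primal carries one linear equality constraint for every $\varphi\in C^1_s$, an infinite family, while the measures live on the non-compact space $\Rr^n\times\Omega$, so neither finite-dimensional LP duality nor weak-$*$ compactness of all probability measures is available; moreover one cannot simply run a minimax theorem on the truncated set $\{\int L\,d\mu\le M\}$, since the measure realizing the inner infimum of the Lagrangian for a fixed $\varphi$ is a Dirac mass of a priori unbounded action. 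It is precisely \eqref{gammadef} that restores the compactness needed to close the gap. Since this kind of argument is by now standard for generalized Mather problems (cf. \cite{GomesSelection}), the remaining details are collected in Appendix \ref{apA}.
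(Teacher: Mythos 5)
Your attainment argument is correct and is in fact a different route from the paper: you prove existence of a minimizer directly (feasibility via $\delta_0\otimes\nu$, tightness from the uniform superlinearity in \eqref{gammadef} together with compactness of $\Omega$, Prokhorov, uniform integrability of $|v|$ to pass \eqref{dhnom} to the limit, lower semicontinuity of the action), whereas the paper obtains attainment as part of the conclusion of the Legendre--Fenchel--Rockafellar theorem. Your weak duality inequality is also correct, including the sign bookkeeping.

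The gap is in the only hard step, the absence of a duality gap, which is the actual content of Theorem \ref{Duality}. What you give there is the formal Lagrangian interchange (which is fine as a heuristic, and your $\varphi\mapsto-\varphi$ substitution is the right normalization) followed by a single sentence asserting that Fenchel--Rockafellar or a minimax theorem applies because \eqref{gammadef} makes $\{\mu:\int L\,d\mu\le M\}$ weak-$*$ compact and this ``furnishes lower semicontinuity at the origin of the primal value function under perturbations of the right-hand side of \eqref{dhnom}''. That is not a proof, and it is not the hypothesis of the theorem you invoke. Fenchel--Rockafellar requires a specific dual pair and a specific pair of convex functions with a qualification condition, none of which you exhibit. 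Concretely, one must (i) choose the ambient space --- here $C^0_\gamma(\mathbb{R}^n\times\Omega)$, whose dual is the space $\Mm$ of $\gamma$-weighted measures, so that finite-action probability measures sit in the dual; (ii) encode the primal as $\min_{\Mm}(h^*-g^*)$ with $h(\phi)=\sup(-\phi-L)$ and $g$ the indicator of the closed convex set generated by the constraint functions, which requires actually computing both conjugates (this is where positivity, unit mass and the holonomy identity are recovered; in the paper this is Proposition \ref{legendre} and needs the truncations $L_n=L\,\eta_n$ and monotone convergence); and (iii) verify the qualification hypothesis, which in this setting is the continuity of $h$ on $C^0_\gamma$ (Lemma \ref{continuity}) --- this, not weak-$*$ compactness of action sublevel sets, is where \eqref{gammadef} enters --- together with the observation (Lemma \ref{UnifContOfg-f}) that passing to the closure of the constraint set does not change the dual value, before rewriting $\sup(g-h)$ in the form \eqref{dualf}. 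If instead you want to argue through lower semicontinuity of a value function, you must specify the perturbation space in which the right-hand side of \eqref{dhnom} is perturbed, the topology in which lsc is proved, and why the dual of that space yields multipliers exactly in $C^1_s$; your text does neither, and your own caveat about Dirac masses of unbounded action shows you are aware that a naive truncation-minimax argument fails. Deferring ``the remaining details'' to an appendix you have not written leaves the crux unproved; the paper's Appendix \ref{apA} (Proposition \ref{legendre}, Lemma \ref{continuity}, Theorem \ref{LegFenchRock}, Lemma \ref{UnifContOfg-f}) is precisely the material your sketch is missing.
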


The proof of this Theorem is similar to analogous results in \cite{GomesSelection}, for instance. For completeness, however, we present the proof in the Appendix \ref{apA}.


In this paper we will need to consider viscosity solutions to the equation
\begin{equation}
\mathcal{H}^\alpha(u,0,\omega)\equiv H(0, D_{x}u_{\alpha}(0, \omega), \omega) + \alpha u_{\alpha}(0, \omega)=0.
\label{DiscHJE}
\end{equation}
As in the standard Mather problem, viscosity solutions yield important 
information concerning the value of the variational problem \eqref{hbaralf}, and help characterize the support of the measure.

Before we proceed, we make some remarks concerning the regularization by convolution of stationary functions. 
\begin{remark}\label{viscsolsmooth}
To approximate a stationary function $u : \mathbb{R}^{n} \times \Omega \to \mathbb{R}$ by
smooth stationary functions we are going to use a 
convolution with a standard mollifier $\eta^{\varepsilon} : \mathbb{R}^{n}   \to \mathbb{R}$, that is, $\eta$ compactly supported, 
$\eta^{\varepsilon} (x)= \frac{1}{\varepsilon} \eta(\frac{x}{\varepsilon})$, 
and $\int_{\mathbb{R}^{n}} \eta(x) dx=1$.
We define the convolution between $u$ and $\eta^{\varepsilon}$ by 
$$u^{\varepsilon}(x, \omega)=\int_{\mathbb{R}^{n}} u(x, \tau_{y}\omega) \eta^{\varepsilon}(y) dy.$$
Observe that, $u^{\varepsilon} \in C_s^1$. Moreover, we have
$$\frac{\partial u^{\varepsilon}}{\partial  x}(x, \omega)  \cdot v=- \int_{\mathbb{R}^{n}} u(x, \tau_{y}\omega) D_{y}\eta^{\varepsilon}(y)  \cdot v \; dy.$$
\end{remark}

\vspace{0.6cm}

We consider two different types of viscosity solutions for $\mathcal{H}(u , 0, \omega) =\lambda$.
Firstly recall the usual definition of viscosity solution: a function $u : \mathbb{R}^{n} \times \Omega \to \mathbb{R}$,  continuous in $x$ (not necessarily $C^{1}$) for each $\omega \in \Omega$, is a {\it viscosity solution in $x$}
of $\mathcal{H}(u, x,  \omega)=\lambda$ if for each $\omega_{0} \in \Omega$, any $C^{1}$ function $\psi: \mathbb{R}^{n}  \to \mathbb{R}$ and any
$x_{0} \in  \mathbb{R}^{n}$ such that $u(x, \omega_{0})-\psi(x)$ has a strict  local  minimum (resp. maximum) at $x_0$ with $u(x_{0}, \omega_{0})-\psi(x_{0})=0$  we have
$$\mathcal{H}(\psi,  x_{0}, \omega_{0}) \geq \lambda \; \text{(resp. }\leq \lambda).$$
For our purposes we need a modified version of viscosity solution: a stationary (not necessarily $C^{1}$) function $u : \mathbb{R}^{n} \times \Omega \to \mathbb{R}$,
continuous in  $\Omega$,  is a {\it viscosity solution in $\omega$}
of $\mathcal{H}(u, 0, \omega)=\lambda$ if for any  $\varphi \in  C^{1}_{s}(\mathbb{R}^{n} \times \Omega)$ and any point $\omega_{0} \in \Omega$ such that $u(0, \omega)-\varphi(0, \omega)$ has a local  minimum (resp. maximum) at $\omega_0$ with $u(0, \omega_{0})-\varphi(0, \omega_{0})=0$  we have
$$\mathcal{H}(\varphi, 0 ,  \omega_{0}) \geq \lambda \; \text{(resp. }\leq \lambda).$$

\begin{proposition}\label{ViscSolEquiv}
Suppose that  $u : \mathbb{R}^{n} \times \Omega \to \mathbb{R}$  is a viscosity solution in $x$ of $\mathcal{H}(u , 0, \omega) =\lambda$ and 
assume furthermore that $u$ is stationary and continuous in  $\Omega$. Then $u$ is also a viscosity solution in $\omega$ of $\mathcal{H}(u , 0, \omega) =\lambda$. 
\end{proposition}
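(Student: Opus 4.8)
The plan is to transfer a local extremum of $\omega\mapsto u(0,\omega)-\varphi(0,\omega)$ on $\Omega$ to a local extremum of the one--variable function $x\mapsto u(x,\omega_0)-\varphi(x,\omega_0)$ at the origin, using the stationarity of both $u$ and $\varphi$ together with the continuity of the action $\tau$, and then to invoke the viscosity--in--$x$ property. The key observation is that $\mathcal{H}(\varphi,0,\omega_0)=H(0,D_x\varphi(0,\omega_0),\omega_0)+\alpha\varphi(0,\omega_0)$ depends on $\varphi$ only through the first--order data $\varphi(0,\omega_0)$ and $D_x\varphi(0,\omega_0)$, so a harmless quadratic perturbation of the test function will not change it.

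Concretely, let $\varphi\in C^{1}_{s}(\mathbb{R}^{n}\times\Omega)$ and $\omega_0\in\Omega$ be such that $\omega\mapsto u(0,\omega)-\varphi(0,\omega)$ has a local minimum at $\omega_0$ with $u(0,\omega_0)-\varphi(0,\omega_0)=0$. By stationarity of $u$ and $\varphi$, for every $x$ in a neighborhood of $0$ we have
\[
u(x,\omega_0)-\varphi(x,\omega_0)=u(0,\tau_x\omega_0)-\varphi(0,\tau_x\omega_0).
\]
Since $x\mapsto\tau_x\omega_0$ is continuous and $\tau_0\omega_0=\omega_0$, the point $\tau_x\omega_0$ remains in any prescribed neighborhood of $\omega_0$ once $|x|$ is small, so the right--hand side is $\geq 0$ for such $x$. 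Hence the function $\psi_0(x):=\varphi(x,\omega_0)$, which is $C^{1}$ because $\varphi\in C^{1}_{s}$, is a $C^1$ test function for which $u(x,\omega_0)-\psi_0(x)$ has a (not necessarily strict) local minimum equal to $0$ at $x=0$.

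To meet the strictness requirement in the definition of viscosity solution in $x$, I would pass to $\psi(x):=\psi_0(x)-|x|^{2}$. Then $u(x,\omega_0)-\psi(x)$ has a \emph{strict} local minimum at $x=0$ with value $0$, while $\psi$ is $C^1$ with $\psi(0)=\varphi(0,\omega_0)$ and $D\psi(0)=D_x\varphi(0,\omega_0)$. Applying the viscosity--in--$x$ inequality at $(x_0,\omega_0)=(0,\omega_0)$ yields $\mathcal{H}(\psi,0,\omega_0)\geq\lambda$, and since
\[
\mathcal{H}(\psi,0,\omega_0)=H(0,D\psi(0),\omega_0)+\alpha\psi(0)=H(0,D_x\varphi(0,\omega_0),\omega_0)+\alpha\varphi(0,\omega_0)=\mathcal{H}(\varphi,0,\omega_0),
\]
we conclude $\mathcal{H}(\varphi,0,\omega_0)\geq\lambda$. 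The local--maximum case is handled identically, replacing $\psi$ by $\psi_0(x)+|x|^{2}$ to obtain a strict local maximum and using the reverse viscosity inequality.

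I do not expect a genuine obstacle. The only two points that need care are: (i) that the \emph{continuity} of the action is exactly what lets one pass from a neighborhood of $\omega_0$ in $\Omega$ to a neighborhood of $0$ in $\mathbb{R}^{n}$ (uniform transitivity is not needed here); and (ii) that the quadratic perturbation upgrading the extremum to a strict one leaves the values $\psi(0)$ and $D\psi(0)$ — the only data on which $\mathcal{H}$ acts — unchanged. Both are elementary, so this should be a short argument.
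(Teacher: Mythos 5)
Your proposal is correct and follows essentially the same route as the paper: transfer the extremum from $\Omega$ to $\mathbb{R}^n$ via stationarity by testing with $\psi_0(x)=\varphi(x,\omega_0)$ and then invoke the viscosity-in-$x$ property at $x=0$. Your extra quadratic perturbation $\mp|x|^2$ to enforce the strictness required by the paper's definition (which the paper's own proof silently skips) is a harmless and legitimate refinement, since it leaves $\psi(0)$ and $D\psi(0)$, hence $\mathcal{H}(\psi,0,\omega_0)$, unchanged.
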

\begin{proof}
Let $u : \mathbb{R}^{n} \times \Omega \to \mathbb{R}$ be a  viscosity solution in $x$ of $\mathcal{H}(u , 0, \omega) =\lambda$. Consider an
arbitrary function $\varphi \in C^{1}_{s}(\mathbb{R}^{n} \times \Omega)$  and a point $\omega_{0} \in \Omega$ such that $u(0, \omega)-\varphi(0, \omega)$ has a local minimum (resp. maximum) and $u(0, \omega_{0})-\varphi(0, \omega_{0})=0$. Define $\psi(x)=\varphi(x, \omega_{0})$. We claim that $u(x, \omega_{0})-\psi(x)$ has a local minimum (resp. maximum) in $x_{0}=0  \in  \mathbb{R}^{n}$. In fact,  
\begin{align*}
u(x, \omega_{0})-\psi(x)&= u(x, \omega_{0})-\varphi(x, \omega_{0})=  u(0, \tau_{x}\omega_{0})-\varphi(0,\tau_{x}\omega_{0}) \\&\geq  u(0, \omega_{0})-\varphi(0, \omega_{0}) = u(0, \omega_{0})-\psi(0), \quad (\text{resp. } \leq.)
\end{align*}
Then, because $u$ is a viscosity solution in $x$ we have
\[
\mathcal{H}(\psi,0, \omega_0)=\mathcal{H}(\varphi,0, \omega_0)\geq \lambda \quad (\text{resp. }\, \leq \lambda).
\]
\end{proof}
Consider the infinite horizon optimal control problem
\begin{equation}
u_{\alpha}(x, \omega)=\inf_{x(0)=x} \int_{0}^{+\infty} e^{- \alpha t} L(x(t), \dot{x}(t), \omega) dt,  \end{equation}
where the infimum is taken over all
globally Lipschitz trajectories with initial condition $x(0)=x$.
Then $u_{\alpha} : \mathbb{R}^{n} \times \Omega \to \mathbb{R}$ satisfies the dynamic programing principle
\begin{equation}
u_{\alpha}(x, \omega)=\inf_{x(0)=x} \left( \int_{0}^{T} e^{- \alpha t} L(x(t), \dot{x}(t), \omega) dt + 
e^{- \alpha T} u_{\alpha}(x(T), \omega) \right), \label{DynProgPrinciple}
\end{equation}
among all globally Lipschitz trajectories with initial condition $x(0)=x$.
It is standard, see \cite{Bardi}, that the function $u_\alpha$ is a viscosity solution of $\mathcal{H}(\varphi,  0, \omega)=0$ in $x$.
Furthermore, the optimal trajectories are solutions to the discounted Euler-Lagrange equations \eqref{discountel}. Finally, for $0<t<T$ we have additionally
that $D_xu_\alpha(x(t))$ exists and
\[
\dot x(t)=-D_pH(D_xu_\alpha(x(t)), x(t)). 
\]

The next proposition is also a well known result, see, for instance, \cite{Bardi} for similar results:
\begin{proposition}\label{EstabilityViscositySol} For each $\omega$ fixed, let $u_{\alpha}(x, \omega)$ be a viscosity solution (in $x$) of 
\begin{equation}
\label{halfie}
\mathcal{H}^\alpha(u,x,\omega)=
 H(x, D_{x}u_{\alpha}(x, \omega), \omega) + \alpha u_{\alpha}(x, \omega)=0.
 \end{equation}
Then $\alpha u_{\alpha}$ is uniformly bounded and $ u_{\alpha}$ is uniformly Lipschitz in $x$, as $\alpha\to 0$.
\end {proposition}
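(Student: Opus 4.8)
The plan is to establish the two bounds by purely control-theoretic comparison arguments, exploiting the nonnegativity of $L$, the growth condition \eqref{lipep}, and the uniform transitivity of the action $\tau$.

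\medskip

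\textbf{Upper bound on $\alpha u_\alpha$.} Since $L\ge 0$, we immediately have $u_\alpha\ge 0$, so $\alpha u_\alpha\ge 0$. For the upper bound, I would exploit uniform transitivity: fix a reference point $\omega_*\in\Omega$ and a bounded trajectory that performs well for $L(\cdot,\cdot,\omega_*)$; actually it is cleaner to argue pointwise. For any $\omega$, pick a competitor trajectory $x(\cdot)$ in the definition of $u_\alpha(0,\omega)$ that first moves a bounded distance (using uniform transitivity, a displacement $z$ with $|z|<M$ chosen so that $\tau_z\omega$ is close to a point where $L(0,\cdot,\tau_z\omega)$ is controlled by a fixed constant, which is possible since $L(0,v,\cdot)$ is continuous on the compact $\Omega$ and hence uniformly bounded on, say, $\{|v|\le 1\}\times\Omega$), reaching there in unit time, and then stays put, i.e. $\dot x\equiv 0$ thereafter. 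Along the first segment $L$ is bounded by a constant $C_0$ (depending on $M$ and the sup of $L$ on $\{|v|\le M\}\times\Omega$, using \eqref{lipep} to pass between basepoints), and along the stationary tail $L(x,0,\omega)=L(0,0,\tau_{x(T)}\omega)\le C_1:=\sup_\Omega L(0,0,\cdot)<\infty$. Plugging this competitor into \eqref{DynProgPrinciple}/the infinite-horizon functional yields
\[
u_\alpha(0,\omega)\le \int_0^1 e^{-\alpha t}C_0\,dt+\int_1^\infty e^{-\alpha t}C_1\,dt\le C_0+\frac{C_1}{\alpha},
\]
so $\alpha u_\alpha(0,\omega)\le \alpha C_0+C_1\le C_0+C_1$ for $\alpha\le 1$, uniformly in $\omega$. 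By stationarity, $\alpha u_\alpha(x,\omega)=\alpha u_\alpha(0,\tau_x\omega)$ is bounded by the same constant.

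\medskip

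\textbf{Uniform Lipschitz bound in $x$.} Let $x,x'\in\Rr^n$ with $y=x'-x$. Take a near-optimal trajectory $\xi(\cdot)$ for $u_\alpha(x,\omega)$ and build a competitor for $u_\alpha(x',\omega)$ by prepending a short segment of length $|y|$, traversed in time $|y|$ at unit speed, that connects $x'$ to $x$, then following $\xi$. The cost of the prepended segment is controlled: by \eqref{lipep} and the already-established bound $\alpha u_\alpha\le C$ (equivalently, by the boundedness of the optimal cost), $L$ along a unit-speed path joining $x'$ to $x$ is bounded by $C(1+L(\cdot,\cdot,\omega))$ terms that integrate, over a time interval of length $|y|$, to $O(|y|)$ — here the factor $(C+CL)$ in \eqref{lipep} is what lets us compare the Lagrangian at the shifted basepoint to its value at the original one without losing control. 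The tail contributes $(e^{-\alpha|y|}-1)$ times a bounded quantity plus the original cost, so
\[
u_\alpha(x',\omega)\le u_\alpha(x,\omega)+C|y|+ (1-e^{-\alpha|y|})\|\alpha u_\alpha\|_\infty\cdot\frac{1}{\alpha}\le u_\alpha(x,\omega)+C'|y|,
\]
with $C'$ independent of $\alpha\in(0,1]$ since $(1-e^{-\alpha|y|})/\alpha\le |y|$. Swapping $x$ and $x'$ gives the two-sided Lipschitz estimate. Equivalently one may read the Lipschitz bound off the Hamilton--Jacobi equation \eqref{halfie}: a uniform bound on $|u_\alpha|$ and on $\alpha u_\alpha$ forces a uniform bound on $|H(x,D_xu_\alpha,\omega)|$ at points of differentiability, and coercivity of $L$ (superlinearity in $v$) translates this into a uniform bound on $|D_xu_\alpha|$; but the direct competitor argument is cleaner and avoids discussing differentiability.

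\medskip

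\textbf{Main obstacle.} The delicate point is making the two short connecting segments genuinely cheap with constants independent of $\alpha$ and of $\omega$: this is exactly where hypothesis \eqref{lipep} enters, since $L$ is only assumed stationary and a priori large, and a crude bound on $L$ along the connecting path would be multiplicative in the (unbounded) running cost of the near-optimal tail. One handles this by choosing the connecting segment to have speed bounded by a fixed constant and invoking \eqref{lipep} in the integrated form $\int_0^{|y|}L(\xi(0)+\sigma(s),\dot\sigma,\omega)\,ds\le |y|(C+C\,\text{(bounded)})$ after first noting that the relevant values of $L$ are at basepoints within distance $|y|$ of a point where the optimal-cost-per-unit-time is controlled; the uniform transitivity of $\tau$ is what guarantees such a controlled reference point exists within bounded distance, uniformly in $\omega$.
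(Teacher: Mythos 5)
Your argument is correct, and it is essentially the standard control-theoretic proof that the paper itself omits (the authors simply cite \cite{Bardi} for this "well known result"): the bound $0\le \alpha u_\alpha\le C$ comes from comparison with a parked trajectory, and the uniform Lipschitz bound from prepending a bounded-speed connecting segment of duration $|x'-x|$ to a near-optimal trajectory, using $u_\alpha\ge 0$ to absorb the discount factor on the tail. One remark: several of your auxiliary ingredients are superfluous and slightly muddle the write-up. Since $L$ is stationary, $L(z,v,\omega)=L(0,v,\tau_z\omega)$, and continuity of $L$ together with compactness of $\Omega$ already give $\sup\{L(0,v,\omega):|v|\le 1,\ \omega\in\Omega\}<\infty$; hence the constant trajectory suffices for the bound on $\alpha u_\alpha$ (no uniform transitivity needed), and the connecting segment in the Lipschitz step has cost at most $C_1|x'-x|$ directly, without invoking \eqref{lipep} or the "controlled reference point" of your final paragraph. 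Condition \eqref{lipep} is genuinely needed only if one instead translates the whole near-optimal trajectory (which, as you note, would produce a constant of order $1/\alpha$ and is rightly avoided), so the obstacle you flag does not arise in your own construction.
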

Using standard techniques we
can establish the following proposition, whose proof is presented in appendix \ref{Appendix3}:
\begin{proposition}\label{solviscalpha} 
Let
$u_{\alpha}  : \mathbb{R}^{n} \times \Omega \to \mathbb{R}$ be a solution of \eqref{halfie}. Then $u_{\alpha}$ is a viscosity solution (in $\omega$)
of  $\mathcal{H}(\varphi, 0, \omega)=0$, and $u_{\alpha}(0, \omega)$ is Lipschitz in $\omega$ with Lipschitz constant (in $\omega$)
 bounded by $K /\alpha$, where $K$ is independent
of $\alpha$, for all $\alpha\geq 0$.
\end {proposition}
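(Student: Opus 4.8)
The plan is to split the statement into two parts: first, that $u_\alpha$ is a viscosity solution in $\omega$; second, the quantitative Lipschitz bound in $\omega$ with constant $K/\alpha$.

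For the first part, I would invoke Proposition~\ref{ViscSolEquiv} essentially verbatim. By Proposition~\ref{EstabilityViscositySol}, $u_\alpha$ is uniformly Lipschitz in $x$ and $\alpha u_\alpha$ is bounded; in particular, $u_\alpha(x,\omega)$ is finite and continuous in $x$. The one point requiring care is that Proposition~\ref{ViscSolEquiv} assumes $u_\alpha$ is \emph{stationary} and continuous in $\omega$. Stationarity of $u_\alpha$ follows from its definition as an infinite-horizon value function together with stationarity of $L$: the change of variables $x(t)\mapsto x(t)+y$ in the trajectory integral, combined with $L(x+y,v,\omega)=L(x,v,\tau_y\omega)$, gives $u_\alpha(x+y,\omega)=u_\alpha(x,\tau_y\omega)$. (Alternatively, if $u_\alpha$ is only assumed to solve \eqref{halfie}, stationarity must be imposed as part of the hypothesis or derived from uniqueness of viscosity solutions for the $\alpha$-discounted equation, which is strictly monotone in $u$ and hence has a comparison principle.) Continuity in $\omega$ will come out of the Lipschitz estimate proved in the second part, so I would actually prove the Lipschitz bound first and then conclude the viscosity-in-$\omega$ property from Proposition~\ref{ViscSolEquiv}.

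For the Lipschitz bound, the idea is to compare the control problems started from two different points of $\Omega$. Fix $\omega_1,\omega_2\in\Omega$ and let $\varepsilon>0$. By uniform transitivity of the action there is $z\in\Rr^n$ with $|z|<M=M(\varepsilon)$ and $d(\tau_z\omega_1,\omega_2)<\varepsilon$; but more usefully, I want to exploit stationarity directly: $u_\alpha(0,\tau_z\omega_1)=u_\alpha(z,\omega_1)$, so it suffices to estimate $|u_\alpha(z,\omega_1)-u_\alpha(0,\omega_1)|$ for $|z|$ bounded, plus a continuity-in-$\omega$ term to pass from $\tau_z\omega_1$ to $\omega_2$. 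For the first term I would take an optimal (or near-optimal) trajectory $x(\cdot)$ for $u_\alpha(0,\omega_1)$, translate it to $x(\cdot)+z$, and use the hypothesis \eqref{lipep}: $L(x+z,v,\omega)-L(x,v,\omega)\le |z|(C+CL(x,v,\omega))$. Plugging the translated trajectory into the functional for $u_\alpha(z,\omega_1)$ gives
\[
u_\alpha(z,\omega_1)-u_\alpha(0,\omega_1)\le |z|\int_0^\infty e^{-\alpha t}\bigl(C+CL(x(t),\dot x(t),\omega_1)\bigr)\,dt
= |z|\Bigl(\tfrac{C}{\alpha}+C\,u_\alpha(0,\omega_1)\Bigr),
\]
and since $\alpha u_\alpha$ is uniformly bounded (Proposition~\ref{EstabilityViscositySol}), the right side is $\le |z|\,\tilde K/\alpha$ for $\tilde K$ independent of $\alpha$. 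The reverse inequality is symmetric. Thus $u_\alpha(\cdot,\omega_1)$ is Lipschitz in its first argument with constant $\tilde K/\alpha$ — which, incidentally, also re-derives the $x$-Lipschitz bound but now with explicit $\alpha$-dependence, consistent with Proposition~\ref{EstabilityViscositySol} near $\alpha$ bounded away from $0$.

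Finally I convert this into the $\omega$-bound. Given $\omega_1,\omega_2$ and $\varepsilon>0$, pick $z$ with $|z|<M(\varepsilon)$ and $d(\tau_z\omega_1,\omega_2)<\varepsilon$. Then
\[
|u_\alpha(0,\omega_1)-u_\alpha(0,\omega_2)|\le |u_\alpha(0,\omega_1)-u_\alpha(z,\omega_1)| + |u_\alpha(0,\tau_z\omega_1)-u_\alpha(0,\omega_2)|,
\]
using $u_\alpha(z,\omega_1)=u_\alpha(0,\tau_z\omega_1)$; the first term is $\le M(\varepsilon)\tilde K/\alpha$. The second term is where the main obstacle lies: I need continuity (indeed a modulus) of $\omega\mapsto u_\alpha(0,\omega)$ that is strong enough, and a priori I only have $d(\tau_z\omega_1,\omega_2)<\varepsilon$. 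The cleanest route is a bootstrap/fixed-modulus argument: let $m(\delta)$ be the modulus of continuity of $u_\alpha(0,\cdot)$ (which exists since $\Omega$ is compact and $u_\alpha(0,\cdot)$ is continuous — continuity itself following from stability of viscosity solutions under the continuous dependence of $L$ on $\omega$, or from the DPP). Write the inequality as $m(d(\omega_1,\omega_2))\le M(\varepsilon)\tilde K/\alpha + m(\varepsilon)$ for all $\varepsilon$; choosing $\varepsilon$ comparable to $d(\omega_1,\omega_2)$ and using that $M(\varepsilon)$ can be taken monotone, a standard argument (as in the Lipschitz-regularity proofs for stationary Hamilton-Jacobi equations) upgrades this to $m(\delta)\le K\delta/\alpha$, i.e. genuine Lipschitz continuity with constant $K/\alpha$, $K$ independent of $\alpha$. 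The delicate point throughout is keeping every constant independent of $\alpha$, which is exactly what the uniform bound on $\alpha u_\alpha$ from Proposition~\ref{EstabilityViscositySol} buys us. With the $\omega$-Lipschitz bound in hand, continuity in $\omega$ is immediate and Proposition~\ref{ViscSolEquiv} gives that $u_\alpha$ is a viscosity solution in $\omega$, completing the proof. \hfill$\blacksquare$
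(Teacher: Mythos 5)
Your first half takes a genuinely different route from the paper's. The paper proves stationarity exactly as you do (translating trajectories and using $L(x+y,v,\omega)=L(x,v,\tau_y\omega)$), but it then verifies the viscosity-in-$\omega$ property \emph{directly} from the dynamic programming principle \eqref{DynProgPrinciple}: assuming $\mathcal{H}(\varphi,0,\omega_\varphi)<-\delta$ on a neighbourhood of the minimum point, it integrates this inequality along a finite-time minimizing trajectory, inserts the DPP identity and lets $T\to 0$ to reach a contradiction with $u_\alpha(0,\omega_\varphi)=\varphi(0,\omega_\varphi)$. Your reduction to Proposition \ref{ViscSolEquiv} (viscosity in $x$ plus stationarity plus continuity in $\omega$) is a legitimate alternative and arguably cleaner, but it makes the conclusion depend on first establishing continuity of $u_\alpha(0,\cdot)$ in $\omega$; mere continuity can indeed be obtained from the DPP and continuity of $L$ in $\omega$, so that part is acceptable, though you should say so rather than defer it to the Lipschitz estimate.

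The genuine gap is in your derivation of the $K/\alpha$ Lipschitz bound in $\omega$ from uniform transitivity. Uniform transitivity only provides, for each $\varepsilon>0$, some $M(\varepsilon)$ with \emph{no quantitative relation} to $\varepsilon$: in general $M(\varepsilon)\to\infty$ as $\varepsilon\to 0$ (in the dense-line-in-$\Tt^d$ example it can grow arbitrarily fast), and nothing makes $M(\varepsilon)$ comparable to $\varepsilon$ or to $d(\omega_1,\omega_2)$. Hence your inequality $m\bigl(d(\omega_1,\omega_2)\bigr)\le M(\varepsilon)\tilde K/\alpha+m(\varepsilon)$ cannot be bootstrapped to $m(\delta)\le K\delta/\alpha$: choosing $\varepsilon$ comparable to $\delta$ leaves $m(\delta)$ on both sides, so the inequality is vacuous, while choosing $\varepsilon$ small blows up $M(\varepsilon)\tilde K/\alpha$; there is no mechanism that produces the linear factor $d(\omega_1,\omega_2)$. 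A Lipschitz modulus in $\omega$ has to come from quantitative dependence of $L$ (or $H$) on $\omega$, e.g.\ by comparing the two value functions along the same (near-optimal, uniformly bounded-velocity) trajectory, which gives $|u_\alpha(0,\omega_1)-u_\alpha(0,\omega_2)|\le \frac1\alpha\sup|L(x,v,\omega_1)-L(x,v,\omega_2)|$ over the relevant compact set of velocities, hence a $K\,d(\omega_1,\omega_2)/\alpha$ bound once such Lipschitz dependence is available -- transitivity of the action only controls $u_\alpha$ along orbits, which is the uniform-in-$\alpha$ $x$-Lipschitz estimate you already have from \eqref{lipep} and Proposition \ref{EstabilityViscositySol}. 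For the record, the paper's own appendix proof of Proposition \ref{solviscalpha} stops after stationarity and the viscosity property and does not print the Lipschitz-in-$\omega$ estimate at all, so you attempted more than the written proof does; but as it stands this step of your argument fails.
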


\begin{proposition} \label{CritValueDiscounted}
Let  $u_{\alpha}$ be a viscosity solution in $\omega$ of
\eqref{halfie} Then
$$ \inf_{ \varphi \in C^{1}_{s}}  \sup_{ \omega \in \Omega} \left\lbrace 
- \alpha \int_{ \Omega} \varphi(0, \omega) d\nu (\omega) + \mathcal{H}^\alpha(\varphi,  0, \omega)\right\rbrace  = - \alpha \int_{ \Omega} u_{\alpha}(0, \omega) d\nu (\omega).$$
\end{proposition}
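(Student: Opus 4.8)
The plan is to prove the identity by establishing the two inequalities separately, using the characterization of $u_\alpha$ as a viscosity solution in $\omega$ provided by Proposition \ref{solviscalpha}. Write $F(\varphi) = -\alpha\int_\Omega \varphi(0,\omega)\,d\nu(\omega) + \sup_{\omega}\mathcal{H}^\alpha(\varphi,0,\omega)$ for the functional being minimized over $\varphi\in C^1_s$.

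For the inequality $\le$: the natural candidate competitor is $\varphi = u_\alpha$ itself, except that $u_\alpha$ need not be $C^1_s$, so it is not admissible in the infimum. To get around this I would use the mollification from Remark \ref{viscsolsmooth}: set $u_\alpha^\varepsilon(x,\omega) = \int_{\mathbb{R}^n} u_\alpha(x,\tau_y\omega)\eta^\varepsilon(y)\,dy$, which lies in $C^1_s$. Using convexity of $\mathcal{H}^\alpha$ in $(D_x\varphi,\varphi)$ (inherited from convexity of $H$ in $p$ and linearity in the $\alpha\varphi$ term), Jensen's inequality gives $\mathcal{H}^\alpha(u_\alpha^\varepsilon,0,\omega) \le \int \mathcal{H}^\alpha(u_\alpha(\cdot,\tau_y\,\cdot),0,\omega)\eta^\varepsilon(y)\,dy$; since $u_\alpha$ is a viscosity subsolution this should be controlled, in the limit $\varepsilon\to 0$, by $0$ (one must be careful here since viscosity subsolutions only satisfy the PDE in the viscosity sense, not pointwise — the standard fact is that sup-convolution or mollification of a subsolution of a convex Hamilton-Jacobi equation is again an approximate subsolution). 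Then $F(u_\alpha^\varepsilon) \le -\alpha\int_\Omega u_\alpha^\varepsilon(0,\omega)\,d\nu + o(1)$, and since $u_\alpha^\varepsilon(0,\cdot)\to u_\alpha(0,\cdot)$ uniformly (using that $u_\alpha(0,\cdot)$ is continuous on the compact $\Omega$, which follows from stationarity and Lipschitz continuity in $x$), we pass to the limit to obtain $\inf_\varphi F(\varphi) \le -\alpha\int_\Omega u_\alpha(0,\omega)\,d\nu(\omega)$.

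For the inequality $\ge$: let $\varphi\in C^1_s$ be arbitrary. I would consider the function $\omega\mapsto u_\alpha(0,\omega) - \varphi(0,\omega)$ on the compact space $\Omega$ and pick a point $\omega_0$ where it attains its maximum; after adding a constant to $\varphi$ (which shifts both sides of the claimed identity by the same amount, since $\mathcal{H}^\alpha(\varphi+c,0,\omega) = \mathcal{H}^\alpha(\varphi,0,\omega) + \alpha c$ and $-\alpha\int(\varphi+c)d\nu = -\alpha\int\varphi\,d\nu - \alpha c$) we may assume $u_\alpha(0,\omega_0) - \varphi(0,\omega_0) = 0$, so that $u_\alpha(0,\cdot) - \varphi(0,\cdot)$ has a local (indeed global) maximum at $\omega_0$ with value zero. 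Since $u_\alpha$ is a viscosity solution in $\omega$ (Proposition \ref{solviscalpha}), the supersolution-type inequality at a maximum gives $\mathcal{H}^\alpha(\varphi,0,\omega_0) \le 0$. Then
\[
F(\varphi) = -\alpha\int_\Omega \varphi(0,\omega)\,d\nu(\omega) + \sup_\omega \mathcal{H}^\alpha(\varphi,0,\omega) \ge -\alpha\int_\Omega \varphi(0,\omega)\,d\nu(\omega),
\]
but this is not yet what we want; we need to compare with $-\alpha\int u_\alpha(0,\omega)\,d\nu$. Here I use that $\varphi(0,\omega) \le u_\alpha(0,\omega)$ for all $\omega$ (because the global maximum of $u_\alpha(0,\cdot)-\varphi(0,\cdot)$ equals $0$), hence $-\alpha\int_\Omega \varphi(0,\omega)\,d\nu \ge -\alpha\int_\Omega u_\alpha(0,\omega)\,d\nu$ since $\alpha\ge 0$ and $\nu$ is a probability measure. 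Combining, $F(\varphi) \ge -\alpha\int_\Omega u_\alpha(0,\omega)\,d\nu(\omega)$ for every admissible $\varphi$, which gives $\ge$ after taking the infimum.

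The main obstacle I anticipate is the $\le$ direction: making rigorous that the mollification $u_\alpha^\varepsilon$ is an approximate subsolution of $\mathcal{H}^\alpha = 0$ uniformly in $\omega$, i.e. that $\sup_\omega \mathcal{H}^\alpha(u_\alpha^\varepsilon,0,\omega) \to 0$ (or at least $\limsup \le 0$) as $\varepsilon\to 0$. This requires the Jensen-type convexity argument together with the stationarity identity $D_x u_\alpha^\varepsilon(0,\omega)\cdot v = -\int u_\alpha(0,\tau_y\omega)D_y\eta^\varepsilon(y)\cdot v\,dy$ from Remark \ref{viscsolsmooth}, plus an equicontinuity/uniform-Lipschitz input from Propositions \ref{EstabilityViscositySol} and \ref{solviscalpha} to control error terms uniformly over the compact $\Omega$. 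If the convexity argument is deemed too delicate, an alternative is to bypass $\le$ entirely by invoking Theorem \ref{Duality}: since $u_\alpha$ solves \eqref{halfie}, a comparison/verification argument identifies $-\alpha\int u_\alpha(0,\omega)\,d\nu$ with $\Hh_\alpha$, and then $\eqref{dualf}$ gives the $\le$ inequality for free; but presumably the paper wants this Proposition as an independent ingredient toward proving (or complementing) Theorem \ref{Duality}, so the mollification route is the intended one.
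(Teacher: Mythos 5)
Your mollification argument for the inequality $\le$ is exactly the paper's route (the paper likewise takes $u^\varepsilon=u_\alpha*\eta_\varepsilon$, invokes convexity of $H$ and the uniform Lipschitz bounds to get $\mathcal{H}^\alpha(u^\varepsilon,0,\omega)\le o(1)$, and passes to the limit). The problem is in your $\ge$ direction: you test at a point $\omega_0$ where $u_\alpha(0,\cdot)-\varphi(0,\cdot)$ attains its \emph{maximum}, but with the paper's convention for viscosity solutions in $\omega$ a maximum point only yields $\mathcal{H}^\alpha(\varphi,0,\omega_0)\le 0$, which gives no lower bound on $\sup_\omega\mathcal{H}^\alpha(\varphi,0,\omega)$; your asserted step $F(\varphi)\ge -\alpha\int_\Omega\varphi(0,\omega)\,d\nu$ therefore does not follow. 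Worse, after normalizing so that the maximum of $u_\alpha-\varphi$ equals $0$ you have $u_\alpha(0,\omega)\le\varphi(0,\omega)$ for all $\omega$, which is the \emph{opposite} of the inequality $\varphi\le u_\alpha$ you then use to compare $-\alpha\int\varphi\,d\nu$ with $-\alpha\int u_\alpha\,d\nu$. As written, both links in the chain are broken.

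The repair is immediate and is precisely what the paper does: take $\omega_0$ to be a point of \emph{minimum} of $u_\alpha(0,\cdot)-\varphi(0,\cdot)$ and normalize the minimum to $0$. Then the viscosity inequality at a minimum gives $\mathcal{H}^\alpha(\varphi,0,\omega_0)\ge 0$, hence $\sup_\omega\mathcal{H}^\alpha(\varphi,0,\omega)\ge 0$ and $F(\varphi)\ge-\alpha\int_\Omega\varphi(0,\omega)\,d\nu$; and the normalization now gives $\varphi(0,\omega)\le u_\alpha(0,\omega)$ everywhere, so $-\alpha\int_\Omega\varphi\,d\nu\ge-\alpha\int_\Omega u_\alpha\,d\nu$ since $\nu$ is a probability measure and $\alpha\ge0$. (Equivalently, without normalizing, one bounds $-\alpha(u_\alpha-\varphi)(0,\omega_0)\ge-\alpha\int_\Omega(u_\alpha-\varphi)\,d\nu$ because the minimum does not exceed the average; this is the paper's bookkeeping.) With that single substitution your two deductions become exactly the correct ones and the proof coincides with the paper's.
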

\begin{proof}
Consider a viscosity solution $u_{\alpha}$ of \eqref{halfie}. Then for any $\varphi \in C^{1}_{s}$ there exists a point $\omega_{ \varphi}$ of  minimum  for $u_{\alpha}(0, \omega ) - \varphi(0, \omega )$. Consider $ \varphi'(x, \omega )= \varphi(x, \omega ) + (u_{\alpha} - \varphi)(0, \omega_{ \varphi})$. Then $u_{\alpha}(0, \omega ) - \varphi'(0, \omega )$ has a minimum equal to 0 in $\omega_{ \varphi}$.

Since  $u_{\alpha}$  is a viscosity solution we have 
$ \mathcal{H}^\alpha(\varphi' ,  0, \omega_{ \varphi})  \geq 0 $ or equivalently
$$ \mathcal{H}^\alpha(\varphi  ,  0, \omega_{ \varphi}) + \alpha (u_{\alpha} - \varphi)(0, \omega_{ \varphi}) \geq 0.$$
Therefore
$$- \alpha \int_{ \Omega}  \varphi(0, \omega)  d\nu   + \mathcal{H}^\alpha(\varphi  ,  0, \omega_{ \varphi}) + \alpha
(u_{\alpha} - \varphi)(0, \omega_{ \varphi}) \geq  - \alpha \int_{ \Omega}  \varphi(0, \omega) d\nu,   $$
which implies
$$\sup_{ \omega \in \Omega} - \alpha \int_{ \Omega}  \varphi(0, \omega)  d\nu   + \mathcal{H}^\alpha(\varphi,  0, \omega)  \geq  - \alpha \int_{ \Omega}  \varphi(0, \omega)d\nu + \alpha (u_{\alpha} - \varphi)(0, \omega_{ \varphi}) ,  $$
and so
$$\sup_{ \omega \in \Omega} - \alpha \int_{ \Omega}  \varphi(0, \omega)  d\nu   + \mathcal{H}^\alpha(\varphi,  0, \omega)  \geq - \alpha \int_{ \Omega}  u_{\alpha}(0, \omega)  d\nu , $$
which finally yields
$$\inf_{ \varphi \in C^{1}_{s}}  \sup_{ \omega \in \Omega} \left\lbrace 
- \alpha \int_{ \Omega} \varphi(0, \omega) d\nu (\omega) + \mathcal{H}^\alpha(\varphi,  0, \omega)\right\rbrace  \geq - \alpha \int_{ \Omega}  u_{\alpha}(0, \omega)  d\nu. $$

In order to get the other inequality we use the functions $u^{\varepsilon}=u_\alpha  * \eta_\varepsilon$. Then $\mathcal{H}^\alpha(u^{\varepsilon}, 0, \omega) \leq o(1)$ owing to the convexity of the Hamiltonian and the uniform Lipschitz estimates on $u_{\alpha}$, we have 
$$\inf_{ \varphi \in C^{1}_{s}}  \sup_{ \omega \in \Omega} \left\lbrace 
- \alpha \int_{ \Omega} \varphi(0, \omega) d\nu (\omega) + \mathcal{H}^\alpha(\varphi,  0, \omega)\right\rbrace \leq  o(1) - \alpha \int_{ \Omega}  u^{\varepsilon}(0, \omega)  d\nu.$$
Then, the inequality desired is obtained by sending $\varepsilon$ to 0, and ends the proof. 
\end{proof}

\begin{corollary} \label{CritValueDiscLambda=0}
We have
$$\bar{H}_{\alpha} =\alpha \int_{ \Omega}  u_{\alpha}(0, \omega)  d\nu$$
where $u_{\alpha}$ is the unique viscosity solution of 
$ H(0, D_{x}u_{\alpha}(0, \omega), \omega) + \alpha u_{\alpha}(0, \omega)=0.$
\end{corollary}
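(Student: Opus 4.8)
The plan is to deduce the corollary by simply chaining together the duality formula of Theorem~\ref{Duality} with the evaluation of the critical value in Proposition~\ref{CritValueDiscounted}; beyond bookkeeping the only genuine content is to exhibit a legitimate $u_\alpha$ to which Proposition~\ref{CritValueDiscounted} applies, and to record why such $u_\alpha$ is unique.

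First I would note that $\mathcal{H}(\varphi,0,\omega)=H(0,D_x\varphi(0,\omega),\omega)+\alpha\varphi(0,\omega)=\mathcal{H}^\alpha(\varphi,0,\omega)$, so the functional $\varphi\mapsto\sup_{\Omega}\{-\alpha\int_{\Omega}\varphi(0,\omega)\,d\nu+\mathcal{H}(\varphi,0,\omega)\}$ appearing on the right of \eqref{dualf} is exactly the one whose infimum is computed in Proposition~\ref{CritValueDiscounted}. Next I would take $u_\alpha$ to be the value function of the infinite horizon problem \eqref{DynProgPrinciple}: it is a viscosity solution in $x$ of $\mathcal{H}^\alpha(u,x,\omega)=0$ for each fixed $\omega$ by the standard theory recalled after that display; it is stationary, since translating an admissible trajectory $x(\cdot)\mapsto x(\cdot)+y$ together with stationarity of $L$ gives $u_\alpha(x+y,\omega)=u_\alpha(x,\tau_y\omega)$; it obeys the uniform bounds of Proposition~\ref{EstabilityViscositySol}; and by Proposition~\ref{solviscalpha} the map $u_\alpha(0,\cdot)$ is continuous (indeed Lipschitz) on $\Omega$ and $u_\alpha$ is a viscosity solution in $\omega$ of $\mathcal{H}^\alpha(\varphi,0,\omega)=0$. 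With this $u_\alpha$, Proposition~\ref{CritValueDiscounted} yields that the inner inf--sup equals $-\alpha\int_{\Omega}u_\alpha(0,\omega)\,d\nu$, and Theorem~\ref{Duality} identifies $\Hh_\alpha$ with minus that quantity, so $\Hh_\alpha=\alpha\int_{\Omega}u_\alpha(0,\omega)\,d\nu$.

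It then remains to justify the word ``unique'' in the statement, which is what makes the right-hand side well defined. For $\alpha>0$ I would invoke the comparison principle for the stationary equation $H(0,D_xu(0,\omega),\omega)+\alpha u(0,\omega)=0$ read in the $x$ variable on all of $\Rr^n$: the strict monotonicity in $u$ produced by the $\alpha u$ term gives comparison among bounded, uniformly continuous functions, and since every solution of interest is Lipschitz in $x$ with $\alpha u$ bounded (Propositions~\ref{EstabilityViscositySol} and \ref{solviscalpha}), two solutions must coincide, hence $\int_{\Omega}u_\alpha(0,\omega)\,d\nu$ is unambiguous. The step I expect to be the main obstacle is exactly this comparison argument carried out carefully in the non-compact, stationary setting: one cannot lean on compactness of $\Rr^n$, so the doubling-of-variables/penalization has to be run using the a priori Lipschitz and boundedness estimates to localize; all the other steps are a direct concatenation of results already established above.
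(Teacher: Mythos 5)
Your proposal is correct and follows essentially the same route as the paper: the published proof is exactly the two-step chain of the duality formula \eqref{dualf} from Theorem \ref{Duality} with Proposition \ref{CritValueDiscounted} applied to the value function $u_\alpha$. The additional care you take in verifying that the value function qualifies (stationarity, Lipschitz continuity, viscosity solution in $\omega$ via Proposition \ref{solviscalpha}) and in justifying uniqueness through the comparison principle for the discounted equation is detail the paper leaves implicit, not a different argument.
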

\begin{proof}
In fact, if we apply Proposition \ref{CritValueDiscounted}  we have the formula
$$ \inf_{ \varphi \in C^{1}_{s}}  \sup_{ \omega \in \Omega} \left\lbrace 
- \alpha \int_{ \Omega} \varphi(0, \omega) d\nu (\omega) + \mathcal{H}^\alpha(\varphi,  0, \omega)\right\rbrace  = - \alpha \int_{ \Omega} u_{\alpha}(0, \omega) d\nu (\omega).$$
Remembering that 
$\bar{H}_{\alpha} = -  \inf_{ \varphi \in C^{1}_{s}}  \sup_{ \omega \in \Omega} 
- \alpha \int_{ \Omega} \varphi(0, \omega) d\nu (\omega) + \mathcal{H}^\alpha(\varphi,  0, \omega),$
we get
$$\bar{H}_{\alpha} =\alpha \int_{ \Omega}  u_{\alpha}(0, \omega)  d\nu.$$
\end{proof}

We state next, without proof, a partial converse to Proposition \ref{ViscSolEquiv}. The proof is rather technical and, in this paper, its only a application  is in Remark \ref{nonexistence}.

\begin{proposition}\label{ViscSolEquivConverse}
Suppose that,
\begin{itemize}
 \item[(a)] There exists $\delta >0$ such that, for all $x  \neq 0$ with $|x| < \delta$,
$\tau_{x}(\cdot): \Omega \to \Omega $  does not have fixed points.
\item[(b)] For each $\omega_{0} \in \Omega$, there exists $\delta >0$  and a set $\Sigma_{\delta}(\omega_{0}) \ni \omega_{0}$, such that, for all $x  \neq 0$ with $|x| < \delta$, and  $ \omega_{1}, \omega_{2} \in \Sigma_{\delta}(\omega_{0})$, if $\tau_{x}(\omega_{1})= \omega_{2} $  then $ \omega_{1} = \omega_{2}$. 
\item[(c)] The set 
\begin{equation}
\label{udelta}
\mathcal{U}_{\delta}(\omega_{0})=\left\lbrace \tau_{x}(\omega) \vert \; \omega \in \Sigma_{\delta}(\omega_{0}), \;  |x|<\delta/2 \right\rbrace 
\end{equation}
is an open neighborhood of $\omega_{0}$.
\end{itemize}
If   $u : \mathbb{R}^{n} \times \Omega \to \mathbb{R}$  is a viscosity solution in $\omega$ of $\mathcal{H}(u, 0, \omega) =\lambda$ then $u$ is also a viscosity solution in $x$  of  $\mathcal{H}^\alpha(u, 0, \omega)=\lambda$.
\end{proposition}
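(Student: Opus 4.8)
The plan is to prove the contrapositive of the implication at the level of test functions, reducing a violation of the viscosity-in-$x$ property at the origin to a violation of the viscosity-in-$\omega$ property, using hypotheses (a)--(c) to transfer a local extremum in $x$ into a local extremum in $\omega$ and to construct a \emph{stationary} test function out of a purely spatial one. Concretely, suppose $u$ is a viscosity solution in $\omega$ of $\mathcal{H}(u,0,\omega)=\lambda$, fix $\omega_0\in\Omega$, a $C^1$ function $\psi:\Rr^n\to\Rr$, and suppose $u(x,\omega_0)-\psi(x)$ has a strict local minimum (resp.\ maximum) at $x_0=0$ with value $0$. I want to produce $\varphi\in C^1_s(\Rr^n\times\Omega)$ such that $u(0,\omega)-\varphi(0,\omega)$ has a local minimum (resp.\ maximum) at $\omega_0$ with value $0$ and such that $D_x\varphi(0,\omega_0)=D\psi(0)$ (hence $\mathcal H(\varphi,0,\omega_0)=\mathcal H(\psi,0,\omega_0)$); then the viscosity-in-$\omega$ property gives $\mathcal H(\psi,0,\omega_0)\ge\lambda$ (resp.\ $\le\lambda$), which is exactly the viscosity-in-$x$ condition.

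The construction of $\varphi$ is where (a)--(c) enter. On the neighborhood $\Uu_\delta(\omega_0)$ from \eqref{udelta}, every point has the form $\tau_y(\omega')$ with $\omega'\in\Sigma_\delta(\omega_0)$ and $|y|<\delta/2$; by (a) and (b) this representation is essentially unique in the relevant range, so I can attempt to \emph{define} $\varphi$ on $\Uu_\delta(\omega_0)$ by declaring $\varphi(0,\tau_y\omega')\overset{!}{=}\psi(y)+c(\omega')$ for a suitable correction term, and then extend it to a stationary function on all of $\Rr^n\times\Omega$ by the stationarity relation $\varphi(x,\omega)=\varphi(0,\tau_x\omega)$. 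The subtlety: I need this to be well-defined, $C^1$ in $x$ with continuous $\omega$-dependence of $D_x\varphi(0,\cdot)$ (so that $\varphi\in C^1_s$), and I need $u(0,\omega)-\varphi(0,\omega)$ to still have a local extremum at $\omega_0$. For the latter, the point is that for $\omega=\tau_y\omega'$ near $\omega_0$, $u(0,\tau_y\omega')=u(y,\omega')$ by stationarity of $u$, so $u(0,\omega)-\varphi(0,\omega)=u(y,\omega')-\psi(y)-c(\omega')$; taking $c\equiv 0$ on the slice $\Sigma_\delta(\omega_0)$ through $\omega_0$ and using that $u(y,\omega_0)-\psi(y)$ has a strict local min at $y=0$ handles the direction transverse to $\Sigma_\delta$, while along $\Sigma_\delta$ one must check the min is preserved — this is where a careful choice of the slice and possibly a genuinely strict (rather than non-strict) extremum, obtained by subtracting $|y|^2$ from $\psi$, will be needed.

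The main obstacle is precisely the regularity and well-definedness of $\varphi$: the "coordinate chart" $(\omega',y)\mapsto \tau_y\omega'$ on $\Uu_\delta(\omega_0)$ need not be smooth — indeed $\Omega$ carries no differentiable structure — so one cannot naively speak of $D_x\varphi(0,\cdot)$ being continuous without extra care. The resolution I would pursue is to build $\varphi$ so that its $x$-dependence is \emph{inherited entirely from $\psi$} along orbits: set $\varphi(x,\omega)=\psi(y+x)$ whenever $\omega=\tau_y\omega'$ with $\omega'$ in the slice and $x$ small, which is automatically stationary ($\tau_{x}\omega = \tau_{y+x}\omega'$), is $C^1$ in $x$ because $\psi$ is, and has $D_x\varphi(0,\tau_y\omega')=D\psi(y)$, continuous in $\omega$ once one knows the chart is continuous (which follows from continuity of $\tau$ together with (a)--(c) pinning down $y=y(\omega)$ continuously). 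One then extends $\varphi$ from a neighborhood of $\omega_0$ to all of $\Omega$ — since the viscosity-in-$\omega$ test only inspects $\varphi$ near $\omega_0$, any continuous stationary extension lying below (resp.\ above) $u$ off the neighborhood suffices, and such an extension can be produced by a cutoff. This is stated to be "rather technical"; the write-up will spend most of its effort verifying the chart continuity and the preservation of the local extremum, the Hamiltonian identity $\mathcal H(\varphi,0,\omega_0)=\mathcal H(\psi,0,\omega_0)$ being immediate from $D_x\varphi(0,\omega_0)=D\psi(0)$ and $\varphi(0,\omega_0)=\psi(0)=u(0,\omega_0)$.
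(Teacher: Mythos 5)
The paper states this proposition explicitly \emph{without proof} (``the proof is rather technical''), so there is no argument of the authors to measure you against; your proposal can only be judged on its own merits, and as it stands it has a genuine gap at its central step. Your plan is the natural one: use (a)--(c) to parametrize the neighborhood $\mathcal{U}_{\delta}(\omega_{0})$ of \eqref{udelta} by $(y,\omega')\mapsto\tau_{y}\omega'$ and transport the spatial test function $\psi$ to a stationary $\varphi\in C^{1}_{s}$ with $D_{x}\varphi(0,\omega_{0})=D\psi(0)$. But the extremum-transfer fails as described. With $c\equiv 0$ you need $u(0,\tau_{y}\omega')-\psi(y)\geq 0$ for all $\omega'\in\Sigma_{\delta}(\omega_{0})$ near $\omega_{0}$ and all small $y$; at $y=0$ this reads $u(0,\omega')-u(0,\omega_{0})\geq 0$, which nothing guarantees, so $\omega_{0}$ need not be a local minimum of $u(0,\cdot)-\varphi(0,\cdot)$ at all. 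The hypothesis only controls $u$ along the single orbit through $\omega_{0}$, while the $\omega$-test requires control on a full neighborhood containing a continuum of other orbits, on which $u$ is constrained only by continuity; making the minimum strict by subtracting $|y|^{2}$ produces a margin that vanishes as $y\to 0$ and therefore cannot dominate the transverse variation of $u$ near $y=0$. The obvious repair $c(\omega')=u(0,\omega')-u(0,\omega_{0})$ fixes the slice but converts the required inequality into ``$y\mapsto u(y,\omega')-\psi(y)$ has a minimum at $0$ for \emph{every} $\omega'$ near $\omega_{0}$,'' which is equally unjustified. This is precisely why the converse is harder than Proposition \ref{ViscSolEquiv} (where restriction of an $\omega$-minimum to one orbit is trivial), and overcoming it needs an idea not present in your sketch --- for instance relocating the test point to a minimizer of the transported difference over a compact neighborhood and passing to the limit, which in turn forces you to prove statements your outline defers.

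Two further points you flag but do not resolve are also real. Continuity of the ``chart'' $\omega\mapsto(y(\omega),\omega'(\omega))$ does not follow from (a)--(c) as stated: uniqueness of the representation follows from (a)+(b), but $\Sigma_{\delta}(\omega_{0})$ is not assumed closed, so a limit of slice points need not lie in the slice and the standard compactness argument for continuity of the inverse breaks down; without this, $\varphi(0,\cdot)$ and $D_{x}\varphi(0,\cdot)$ need not be continuous in $\omega$, i.e.\ $\varphi\notin C^{1}_{s}$. Finally, the ``extension by a cutoff'' must itself be stationary and $C^{1}$ along orbits with $D_{x}\varphi(0,\cdot)$ continuous; since $\Omega$ carries no differentiable structure, an arbitrary continuous cutoff in $\omega$ will not do, and one has to smooth along orbits as in Remark \ref{viscsolsmooth}. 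In short, the strategy is reasonable, but the decisive step (transferring the extremum from one orbit to an $\omega$-neighborhood) is exactly where the proposal stops, so the proof is incomplete.
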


\begin{remark}\label{nonexistence}
Note that in some cases $\mathcal{H}^\alpha(u, \omega)=\lambda$ does not admit  viscosity solutions in $\omega$, as pointed out in \cite{LS}.
In their example $\Omega=\mathbb{T}^{2}$,   $L=L(x, v, \omega): \mathbb{R}  \times \mathbb{R} \times \mathbb{T}^{2} \to \mathbb{R}$  is the Lagrangian given by
$L(x, v, \omega)=\frac{1}{2} v^{2} + \cos(\omega_{1} +x) +  \cos(\omega_{2} + \sqrt{2}x),$
with the associated Hamiltonian 
$ H(x,p, \omega)=\frac{1}{2} p^{2} - \cos(\omega_{1} +x) -  \cos(\omega_{2} + \sqrt{2}x),$
and the action $\tau : \mathbb{R}  \times \mathbb{T}^{2}  \to \mathbb{T}^{2}$ is given by
$\tau_{x}(\omega_1, \omega_2)= (\omega_1 + x , \omega_2 + \sqrt{2} x).$ 

In this case the viscosity solutions in $x$ are unbounded.
So, if there where a viscosity solution in $\omega$,  then it would be a solution in $x$ by Proposition \ref{ViscSolEquivConverse}. By compactness, any stationary continuous function is bounded, which would be a contradiction.
\end{remark}

\section{Some formal computations} \label{sfc}
In this section we adapt the formal computations in \cite{EGom1} to motivate the regularity results in the following sections. 
Consider the periodic case of a $C^{2}$ Lagrangian $L:\mathbb{T} \times \mathbb{R} \to \mathbb{R}$, given by  $L(x, v)= \frac{1}{2} v^{2} - V(x)$, and the associated Hamiltonian $H(x,p)= \frac{1}{2} p^{2} + V(x)$. The stationary case follows along the same lines, as we will see in later sections.

Let $u$ be a solution to the discounted Hamilton-Jacobi equation $\frac{1}{2}u_{x}^{2} + V(x) + \alpha u=0$. Let $\mu_\alpha$ be a discounted Mather measure
with trace  $\theta_{\alpha}$ and such that the projection of $\mu_\alpha$ in the $x$ coordinated is denoted by $\theta$, that is, 
\[
\int_{\Tt\times \Rr} \varphi(x)d\mu_\alpha=\int_{\Tt} \varphi(x) d\theta.
\]
Note that $\theta$ in general does not agree with $\theta_\alpha$. In this section we  assume that $\mu_\alpha$ has the special property that 
$\theta_\alpha=\theta$. Under this assumption $\mu_\alpha$ is holonomic, that is
\[
\int_{\Tt\times \Rr} v \varphi_x(x) d\mu_\alpha=0, 
\]
for all $C^1$ periodic function $\varphi(x)$. 

We will first show that $\mu_\alpha$ almost every $(x,v)\in \Tt\times \Rr$, we have $v=-u_x(x)$. To see this we will argue by contradiction.
In this case if $v\neq -u_x(x)$, there would exist a set of positive measure $\mu_\alpha$ in which 
\[
L(x,v)+v u_x > -H(u_x, x). 
\]
Since $L(x,v)+v u_x \geq -H(u_x, x)$, integrating with respect to $\mu_\alpha$ yields
\[
\int_{\Tt\times \Rr} L d \mu_\alpha+ \int_{\Tt\times \Rr} v u_x d\mu_\alpha>\alpha \int_{\Tt\times \Rr} u d\mu_\alpha.
\]
This would yield
\[
\int_{\Tt\times \Rr} L d \mu_\alpha>\alpha \int_{\Tt} u d\theta_\alpha,
\]
which contradicts the optimality condition.

Therefore the holonomy constraint can be written as
\[
\int_{\mathbb{R}} ( u_{x} \varphi_{x} + \alpha \varphi ) d\theta(x)= \alpha \int_{\mathbb{R}} \varphi d\theta_{\alpha}(x).
\]

By differentiating twice the Hamilton-Jacobi equation we have $ u_{x}(u_{xx})_{x} +u_{xx}^{2}  + V''(x) + \alpha u_{xx}=0$. Integrating with respect to  $\mu_{\alpha}$ yields
$$\int_{\mathbb{R}} ( u_{x}(u_{xx})_{x} +u_{xx}^{2}  + V''(x) + \alpha u_{xx} ) d\mu_\alpha=0,$$
or, equivalently,
$$\int_{\mathbb{R}} u_{x}(u_{xx})_{x} d\theta(x)  + \int_{\mathbb{R}} u_{xx}^{2} + \alpha u_{xx}   d\theta(x)= - \int_{\mathbb{R}} V''(x)   d\theta(x). $$
Since the trace of $\mu_{\alpha}$, $\theta_\alpha$ is equal to its projection $\theta(x)$, then the measure $\mu_{\alpha}$ is holonomic and so $$\int_{\mathbb{R}} u_{x}(u_{xx})_{x} d\theta(x)=0.$$ 
Using   $-  \alpha u_{xx}   \leq  \frac{1}{2} u_{xx}^{2} +  \frac{1}{2}\alpha^{2}$ we get,
\begin{align*}
\int_{\mathbb{R}} u_{xx}^{2} d\theta(x)&= - \int_{\mathbb{R}}  V''(x)   d\theta(x) -  \int_{\mathbb{R}}  \alpha u_{xx}   d\theta(x)  \\& \leq    - \int_{\mathbb{R}}  V''(x)   d\theta(x) +  \int_{\mathbb{R}}  \frac{1}{2} u_{xx}^{2} +  \frac{1}{2}\alpha^{2}   d\theta(x),
\end{align*}
which  yields a $L^{2}(\theta)$ bound for $u_{xx}$:
$$\int_{\mathbb{R}} u_{xx}^{2} d\theta(x) \leq    \int_{\mathbb{R}}  \alpha^{2} - 2 V''(x) d\theta(x).$$
In order to derive $L^{\infty}$ estimates to $u_{xx}$ we proceed as follows: first we multiply the second derivative of the Hamilton-Jacobi equation by a function $\Psi'(u_{xx})$,  
$$\int_{\mathbb{R}} u_{x}(u_{xx})_{x}\Psi'(u_{xx}) d\theta(x)  + \int_{\mathbb{R}} \left[ u_{xx}^{2} + \alpha u_{xx} + V''(x) \right ]   \Psi'(u_{xx}) d\theta(x)=0.$$
Let $\Psi: \mathbb{R} \to \mathbb{R}$ be such that
$$\Psi'(x)=
\begin{cases}
1  & \text{if} \; x \leq -\lambda  \\
0  & \text{otherwise},
\end{cases}
$$
where $\lambda >0$ is fixed. Choose $\Phi(x)=\Psi'(x)$ (actually one should to use a $C^{\infty}$ approximation of $\Psi'(x)$). Observe that $(u_{xx})_{x}\Phi(u_{xx})=\Psi(u_{xx})_{x}$ and so $\int_{\mathbb{R}}u_{x}(u_{xx})_{x}\Psi'(u_{xx})   d\theta(x) =0$.
Define $A=\{x | u_{xx} \leq -\lambda \}$. Thus, 
\begin{align*}
0&= \int_{\mathbb{R}} \left(u_{xx}^{2}  + V''(x) +\alpha u_{xx}\right) \Phi(u_{xx}) d\theta(x)
\\&=\int_{A} \left(u_{xx}^{2}  + V''(x) +\alpha u_{xx}\right) d\theta(x).
\end{align*}
Since $u_{xx} \leq -\lambda$, and using $  \alpha u_{xx}   \leq  -\frac{1}{2} u_{xx}^{2} -   \frac{1}{2}\alpha^{2}$, one can show that, $0 \geq (\frac{\lambda^2}{2} -   \frac{1}{2}\alpha^{2} + c) \theta(A)$, where, $|V''|  \leq c $. Since $\lambda$ is arbitrary , we get $\theta(A)=0$.  Thus, there exists $\lambda >0$, such that, $u_{xx} > -\lambda$, $\theta$-a.e.

The solutions of  $\alpha u+\frac{1}{2}u_{x}^{2} + V(x)=0$ are semi-concave (this is a standard result, see \cite{Bardi} or the survey paper \cite{BG}), so we get that there exists $\beta >0$ such that $u_{xx} <  \beta$, and so, for some $C>0$,
$|u_{xx}| < C$, $\theta$ almost everywhere.

\section{Holonomic discounted stationary Mather measures}
\label{hdmm}

Motivated by the formal computations in the previous section, we will now establish the existence of holonomic discounted stationary Mather measures. In the paper \cite{CamCapGom}, these measures were called invariant, we did not keep this name here to avoid confusion with invariance with respect to Euler- Lagrange equations.

Given a probability measure $\nu$, and a corresponding discounted stationary  Mather measure $\mu$ with trace $\nu$,  we say that $\mu$ is a holonomic discounted stationary Mather measure if
$$\int_{\mathbb{R}^{n} \times \Omega}  \varphi (0, \omega)  d\mu (v, \omega)= \int_{ \Omega} \varphi(0, \omega) d\nu (\omega),$$
for all $\varphi \in C^{1}_{s}(\mathbb{R}^{n} \times \Omega)$. In particular, $\mu$ satisfies the undiscounted holonomy constraint.
\begin{theorem}\label{InvarExistence} 
There exists a holonomic discounted stationary Mather measure. 
\end {theorem}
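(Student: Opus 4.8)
The plan is to construct a holonomic discounted stationary Mather measure directly from an optimal trajectory of the infinite-horizon control problem, by passing to the limit in long-time averages, and then verifying that the resulting occupation measure is simultaneously minimizing, satisfies the discounted holonomy constraint, and satisfies the undiscounted holonomy identity that defines holonomic measures. First I would fix $\alpha>0$ and a base point $\omega_0\in\Omega$, and take the optimal (or $\varepsilon$-optimal) Lipschitz trajectory $x(\cdot)$ with $x(0)=0$ for $u_\alpha(0,\omega_0)$; by Proposition \ref{EstabilityViscositySol} (and the discounted Euler-Lagrange equations \eqref{discountel}) this trajectory has velocity bounds controlled independently of the fine structure, so the pairs $(\dot x(t),\tau_{x(t)}\omega_0)$ live in a fixed compact subset of $\Rr^n\times\Omega$ after using the superlinearity \eqref{gammadef} to control where mass can escape. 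I would then define, along a suitable sequence $T_j\to\infty$, the discounted occupation measures
\[
\int_{\Rr^n\times\Omega}\phi(v,\omega)\,d\mu_{T_j}(v,\omega)
=\frac{\int_0^{T_j} e^{-\alpha t}\phi(\dot x(t),\tau_{x(t)}\omega_0)\,dt}{\int_0^{T_j} e^{-\alpha t}\,dt},
\]
and likewise the traces $\nu_{T_j}$ on $\Omega$ obtained from $\omega\mapsto\tau_{x(t)}\omega_0$. By tightness (superlinear growth of $L$ together with the a priori bound on $\int e^{-\alpha t}L\,dt$ coming from $u_\alpha$ being bounded, Proposition \ref{EstabilityViscositySol}) and weak-$*$ compactness of probability measures on $\Omega$, I extract a subsequential limit $(\mu,\nu)$.

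Next I would check the three required properties of $(\mu,\nu)$. For the discounted holonomy constraint \eqref{dhnom}: for $\varphi\in C^1_s(\Rr^n\times\Omega)$, the stationarity gives $\frac{d}{dt}\varphi(0,\tau_{x(t)}\omega_0)=v\cdot D_x\varphi(0,\tau_{x(t)}\omega_0)$ with $v=\dot x(t)$, so that $A^v\varphi(\tau_{x(t)}\omega_0)=\frac{d}{dt}\varphi-\alpha\varphi$ along the trajectory; multiplying by $e^{-\alpha t}$ and integrating by parts over $[0,T_j]$ produces exactly a discrete version of \eqref{dhnom} plus boundary terms of order $e^{-\alpha T_j}$ and a term $\alpha\varphi(0,\omega_0)$, which in the normalized limit yields $\int A^v\varphi\,d\mu=-\alpha\int_\Omega\varphi(0,\omega)\,d\nu$. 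For minimality: $u_\alpha$ is a (viscosity, hence by the convolution argument of Remark \ref{viscsolsmooth} approximately classical) subsolution of \eqref{DiscHJE}, so $L(0,v,\omega)\ge -A^v u_\alpha(0,\omega)$ pointwise up to $o(1)$ after mollification; integrating against any competitor $\tilde\mu$ with trace $\tilde\nu$ and using \eqref{dhnom} gives $\int L\,d\tilde\mu\ge \alpha\int u_\alpha\,d\nu$ — wait, more carefully, the dynamic programming principle \eqref{DynProgPrinciple} already tells us the chosen trajectory realizes $u_\alpha(0,\omega_0)$, so $\int L\,d\mu_{T_j}$ converges to a value $\le\bar H_\alpha$ by construction while the duality Theorem \ref{Duality} and Corollary \ref{CritValueDiscLambda=0} give the matching lower bound $\bar H_\alpha=\alpha\int_\Omega u_\alpha\,d\nu$; so $\mu$ is optimal with trace $\nu$.

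Finally, and this is the step I expect to be the main obstacle, I must produce the extra \emph{undiscounted} holonomy identity $\int\varphi(0,\omega)\,d\mu=\int_\Omega\varphi(0,\omega)\,d\nu$ for all $\varphi\in C^1_s$. This does not follow from \eqref{dhnom} alone; the point is that the trace $\nu$ was built from the \emph{same} time-averaging along the \emph{same} trajectory as $\mu$, so for any continuous stationary $\varphi$ the quantities $\int\varphi(0,\omega)\,d\mu_{T_j}$ and $\int\varphi(0,\omega)\,d\nu_{T_j}$ are, by the identity $\varphi(0,\tau_{x(t)}\omega_0)=\varphi(x(t),\omega_0)$, averages of the same integrand along $x(\cdot)$ — hence equal in the limit, provided one sets up the $\nu_{T_j}$ as push-forwards of the (discounted, normalized) time measure on $[0,T_j]$ under $t\mapsto\tau_{x(t)}\omega_0$. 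The care needed is to make sure the same subsequence $T_j$ is used for both limits and that the normalizations agree; once $\mu$ and $\nu$ come from one construction this is immediate. The remaining routine work is to let $\varepsilon\to 0$ in the $\varepsilon$-optimal trajectory if one could not take a genuine minimizer, and to note that superlinearity rules out loss of mass at infinity in $v$ so that $\mu$ is a probability measure. Combining: $\mu$ is a discounted stationary Mather measure with trace $\nu$ satisfying the holonomic identity, which is the assertion. $\square$
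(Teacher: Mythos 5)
Your overall strategy (occupation measures along optimal trajectories of the discounted problem) is the same as the paper's, but your choice of the \emph{exponentially weighted, normalized} occupation measure breaks the argument exactly at the point you flagged as "immediate". Compute the constraint explicitly: for $\varphi\in C^{1}_{s}$, stationarity gives $\frac{d}{dt}\bigl(e^{-\alpha t}\varphi(x(t),\omega_0)\bigr)=e^{-\alpha t}\bigl(\dot x(t)\cdot D_x\varphi(0,\tau_{x(t)}\omega_0)-\alpha\varphi(0,\tau_{x(t)}\omega_0)\bigr)$, so with $m_T=\int_0^T e^{-\alpha t}\,dt$,
\[
\int_{\Rr^n\times\Omega} A^{v}\varphi\,d\mu_{T}
=\frac{e^{-\alpha T}\varphi(x(T),\omega_0)-\varphi(0,\omega_0)}{m_{T}}
\longrightarrow -\alpha\,\varphi(0,\omega_0).
\]
Hence your limit measure satisfies \eqref{dhnom} only with trace $\nu=\delta_{\omega_0}$, not with the trace you actually constructed (the push-forward of the normalized discounted time measure, i.e.\ the projection of $\mu$ onto $\Omega$). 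You cannot have it both ways: if $\nu$ is the projection of $\mu$ --- which is what the holonomic identity $\int\varphi(0,\omega)\,d\mu=\int\varphi(0,\omega)\,d\nu$ requires --- then \eqref{dhnom} fails because of the uncancelled boundary term, since the same computation gives $\int v\cdot D_x\varphi(0,\omega)\,d\mu=\alpha\bigl(\int\varphi(0,\omega)\,d\mu-\varphi(0,\omega_0)\bigr)\neq 0$ in general; and if instead you take $\nu=\delta_{\omega_0}$, the holonomic identity fails because the projection of $\mu$ is not $\delta_{\omega_0}$. So what you build is a discounted Mather measure with trace $\delta_{\omega_0}$, but not a holonomic one, which is precisely the extra property the theorem is about.

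The paper avoids this by averaging \emph{without} the exponential weight: $\mu$ is defined by Ces\`aro averages $\frac1{T_n}\int_0^{T_n}\phi(\dot x_n(t),\tau_{x_n(t)}\omega)\,dt$ along finite-time minimizers of \eqref{DynProgPrinciple}. Then $\int v\cdot D_x\varphi(0,\omega)\,d\mu=\lim_n\frac{\varphi(x_n(T_n),\omega)-\varphi(x_n(0),\omega)}{T_n}=0$, so the discounted holonomy constraint with trace equal to the projection and the holonomic identity hold simultaneously and for free. The price is that optimality no longer drops out of the occupation measure directly; the paper recovers it by summing the dynamic programming principle over a fine partition of $[0,T_n]$, letting the mesh go to zero, and dividing by $T_n$, which yields $\int L\,d\mu=\alpha\int_\Omega u_\alpha(0,\omega)\,d\nu=\Hh_\alpha$ by Corollary \ref{CritValueDiscLambda=0}. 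To repair your proof, replace the discounted weights by plain time averages and add that partition argument for minimality.
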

\begin{proof}
Fix $\omega\in \Omega$. Consider a sequence $T_n\to \infty$ and a sequence $x_n(t)$ 
of minimizing  trajectories for the dynamic 
programing principle \eqref{DynProgPrinciple}, that is, 
$$u_{\alpha}(0, \omega)=   \int_{0}^{T_n} e^{- \alpha t} L(x_n(t), \dot{x}_n(t), \omega) dt + 
e^{- \alpha T_n} u_{\alpha}(x_n(T_n), \omega). $$
Because $u_\alpha$ is Lipschitz and 
\[
\dot x_n=-D_pH(D_xu^\alpha(x_n(t)), x_n(t))
\]
the $|\dot x_n|$ is uniformly bounded. 

Define a probability measure $\mu$ by
$$\int_{\mathbb{R}^{n} \times \Omega}  \phi (v, \omega)  d\mu (v, \omega)=\lim_{n \to \infty} \frac{1}{T_n} \int_{0}^{T_n} \phi(\dot{x}_n, \tau_{x_n(t)}\omega)  dt,$$
for any $\phi \in C_\gamma^0(\mathbb{R}^{n} \times \Omega)$, where the limit is taken through an appropriate subsequence. This sublimit exists and is a probability measure
because $\Omega$ is compact
and $|\dot x_n|$ is uniformly bounded. 

Let $\varphi \in C_s^1$. Observe that $\frac{d \,}{dt}\varphi (x_n(t), \omega)= \dot{x}_n(t) \cdot D_{x} \varphi (0,\tau_{x_n(t)} \omega)$. So, if 
$\phi (v, \omega)=v \cdot D_{x} \varphi (0, \omega)$, then 
\begin{align*}
&\int_{\mathbb{R}^{n} \times \Omega}  \phi (v, \omega)  d\mu (v, \omega)=\lim_{n \to \infty}  \frac{1}{T_n} \int_{0}^{T_n} \dot{x}_n(t)  \cdot D_{x} \varphi (0,\tau_{x_n(t)} \omega)  dt\\  &= \lim_{n \to \infty}  \frac{1}{T_n} \int_{0}^{T_n} \frac{d \,}{dt}\varphi (x_n(t), \omega)  dt  =
\lim_{n \to \infty}\frac{\varphi(x_n(T_n))-\varphi(x_n(0))}{T_n}=
0. 
\end{align*}
Since $A^{v} \varphi  =v \cdot D_{x}\varphi(0, \omega) - \alpha \varphi(0, \omega)$,
\begin{align*}
&\int_{\mathbb{R}^{n} \times \Omega} A^{v} \varphi   d\mu = \int_{\mathbb{R}^{n} \times \Omega}  v \cdot  D_{x} \varphi (0,  \omega) - \alpha \varphi (0,  \omega)  d\mu (v, \omega)\\&=\lim_{n \to \infty}  \frac{1}{T_n} \int_{0}^{T_n} \dot{x}_n(t)  \cdot D_{x} \varphi (0,\tau_{x_n(t)} \omega) - \alpha \varphi (0, \tau_{x_n(t)} \omega)  dt  \\&=- \alpha  \lim_{T_n \to \infty}  \frac{1}{T_n} \int_{0}^{T_n}  \varphi (0, \tau_{x_n(t)} \omega)  dt  =- \alpha  \int_{ \Omega} \varphi(0, \omega) d\nu (\omega),    
\end{align*}
where $\nu$ is given by,
$$\int_{ \Omega} g(\omega) d\nu(\omega)= \lim_{n \to \infty}  \frac{1}{T_n} \int_{0}^{T_n}  g(\tau_{x_n(t)} \omega)  dt,$$
for all continuous function $g : \Omega \to \mathbb{R}$. In particular, $\int_{\mathbb{R}^{n} \times \Omega}  \varphi (0, \omega)  d\mu (v, \omega)= \int_{ \Omega} \varphi(0, \omega) d\nu (\omega)$.

We must to prove that $\mu$ is minimizing. To do so, fix first $n$ and
consider a partition $\{0=t_{0}, t_{1},..., t_{N-1}=T_n \}$ of $[0,T_n]$,  where $t_{i+1}=t_{i} + h$, and $h=T_n/N$.
 The restriction of $x_n(t)$ to each sub-interval is   minimizing, i.e., 
$$u_{\alpha}(x_n(t_{i}), \omega)=   \int_{t_{i}}^{t_{i+1}} e^{- \alpha (t- t_{i})} L(x_n(t), \dot{x}_n(t), \omega) dt + 
e^{- \alpha h} u_{\alpha}(x_n(t_{i+1}), \omega).$$
We have,
$$\sum_{i=0}^{i=N-1} u_{\alpha}(x_n(t_{i}), \omega) - e^{- \alpha h} u_{\alpha}(x_n(t_{i+1}), \omega)=  $$
$$=\sum_{i=0}^{i=N-1} u_{\alpha}(x_n(t_{i}), \omega) - u_{\alpha}(x_n(t_{i+1}), \omega) + (1- e^{- \alpha h}) u_{\alpha}(x_n(t_{i+1}), \omega)= $$ 
$$= u_{\alpha}(x_n(0), \omega) - u_{\alpha}(x_n(T), \omega) +\alpha (\frac{1- e^{- \alpha h}}{\alpha h}) \sum_{i=0}^{i=N-1} h u_{\alpha}(x_n(t_{i+1}), \omega).$$
Sending $h \to 0$ we get 
$$\lim_{h \to 0} \sum_{i=0}^{i=N-1} u_{\alpha}(x_n(t_{i}), \omega) - e^{- \alpha h} u_{\alpha}(x_n(t_{i+1}), \omega)=  u_{\alpha}(x_n(0), \omega) - u_{\alpha}(x_n(T_n), \omega) +\alpha \int_{0}^{T_n}  u_{\alpha}(0, \tau_{x_n(t)} \omega)dt. $$ 
On the other hand, we have
$$ \lim_{h \to 0} \sum_{i=0}^{i=N-1} \int_{t_{i}}^{t_{i+1}} e^{- \alpha (t- t_{i})} L(x_n(t), \dot{x}_n(t), \omega) dt = \int_{0}^{T_n}  L(x_n( t ), \dot{x}_n( t ), \omega) dt.$$
Thus, 
\begin{align*}
&\alpha \int  u_{\alpha}(0, \omega)  d\nu=\\
&\lim_{n \to \infty} \frac{1}{T_n} \left\lbrace u_{\alpha}(x_n(0), \omega) - u_{\alpha}(x_n(T_n), \omega) +\alpha \int_{0}^{T_n}  u_{\alpha}(0, \tau_{x_n(t)} \omega)dt \right\rbrace  \\
&=\lim_{n \to \infty} \frac{1}{T_n} \int_{0}^{T_n}  L(x_n(t), \dot{x}_n(t), \omega) dt=\int_{\mathbb{R}^{n} \times \Omega} L (0, v, \omega)  d\mu (v, \omega).
\end{align*}
By Corollary \ref{CritValueDiscLambda=0} we have $\bar{H}_{\alpha} =\alpha \int  u_{\alpha}(0, \omega)  d\nu$.
Thus $\mu$ is minimizing.
\end{proof}

We should note here that the Theorem does not assert uniqueness. Furthermore the measure $\mu$ may depend on the choice of $\omega\in \Omega$ or in the particular
sequence we choose to extract the weak limits. For our purposes, however, existence is sufficient. 

\begin{theorem}
\label{discoutedelinvar}
Let $\mu_\alpha$ be a holonomic discounted Mather measure as constructed in theorem \ref{InvarExistence}. Then $\mu_\alpha$ is invariant under the discounted Euler-Lagrange flow. 
\end{theorem}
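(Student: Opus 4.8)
The plan is to show that the occupation measure $\mu_\alpha$ constructed as an ergodic average along the minimizing trajectories $x_n(\cdot)$ satisfies the invariance criterion \eqref{InvCond} for the discounted Lagrangian flow, i.e.\ that $\int \nabla\hat\phi(0,v,\omega)\cdot W^{L_\alpha}(0,v,\omega)\,d\mu_\alpha(v,\omega)=0$ for every test function $\hat\phi$ in the relevant class. The essential point is that the optimal trajectories for the discounted infinite horizon problem solve the discounted Euler-Lagrange equations \eqref{discountel} (equivalently, are integral curves of $W^{L_\alpha}$, once lifted to $\Rr^n\times\Rr^n\times\Omega$ via $\tau$), so that along each such curve $t\mapsto (x_n(t),\dot x_n(t),\tau_{x_n(t)}\omega)$ the chain rule gives $\frac{d}{dt}\hat\phi(0,\dot x_n(t),\tau_{x_n(t)}\omega)=\nabla\hat\phi(0,\dot x_n(t),\tau_{x_n(t)}\omega)\cdot W^{L_\alpha}(0,\dot x_n(t),\tau_{x_n(t)}\omega)$, using stationarity of both $\hat\phi$ and $W^{L_\alpha}$ to move the $x$-dependence into the $\Omega$-slot.

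First I would record that, by the regularity statement quoted after \eqref{DynProgPrinciple}, for $0<t<T_n$ the optimal trajectory satisfies $\dot x_n(t)=-D_pH(D_xu_\alpha(x_n(t)),x_n(t))$ and $\ddot x_n$ exists, and $x_n$ solves \eqref{discountel}; hence in the variables $(x,v)=(x_n(t),\dot x_n(t))$ one has $\dot x=v=X^{L_\alpha}$ and $\ddot x = (D^2_{vv}L)^{-1}(D_xL+\alpha D_vL - D_{xv}L\,v)=Y^{L_\alpha}$, so the curve is an integral curve of $W^{L_\alpha}$. Next, using stationarity of $\hat\phi$ I would write $\hat\phi(x_n(t),\dot x_n(t),\omega)=\hat\phi(0,\dot x_n(t),\tau_{x_n(t)}\omega)$, differentiate in $t$, and integrate over $[0,T_n]$, obtaining
\[
\frac{1}{T_n}\int_0^{T_n}\nabla\hat\phi(0,\dot x_n(t),\tau_{x_n(t)}\omega)\cdot W^{L_\alpha}(0,\dot x_n(t),\tau_{x_n(t)}\omega)\,dt=\frac{\hat\phi(x_n(T_n),\dot x_n(T_n),\omega)-\hat\phi(x_n(0),\dot x_n(0),\omega)}{T_n}.
\]
Since $|\dot x_n|$ is uniformly bounded (as shown in the proof of Theorem \ref{InvarExistence}) and $\hat\phi$ is bounded on the relevant compact set of $(v,\omega)$, the right-hand side tends to $0$ as $n\to\infty$. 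Passing to the subsequence defining $\mu_\alpha$ and using that $(v,\omega)\mapsto \nabla\hat\phi(0,v,\omega)\cdot W^{L_\alpha}(0,v,\omega)$ is an admissible test function (continuous, with the appropriate growth controlled by $\gamma$ via the coercivity of $L$ and the structure of $W^{L_\alpha}$) yields $\int \nabla\hat\phi(0,v,\omega)\cdot W^{L_\alpha}\,d\mu_\alpha=0$, which by \eqref{InvCond} is exactly invariance under $\Psi^\alpha$.

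The main obstacle I anticipate is the justification that the integrand $(v,\omega)\mapsto\nabla\hat\phi(0,v,\omega)\cdot W^{L_\alpha}(0,v,\omega)$ lies in the test-function class $\hat C^0_\gamma$ against which the weak limit defining $\mu_\alpha$ is taken — the components of $W^{L_\alpha}$ involve $(D^2_{vv}L)^{-1}$ and derivatives of $L$, and one must check these grow slowly enough relative to $\gamma$; in practice the uniform bound on $|\dot x_n|$ means only the values of the integrand on a fixed compact velocity set matter, so this is a technical rather than substantive difficulty, but it should be addressed explicitly (or circumvented by truncation, replacing $\hat\phi$ by $\hat\phi$ times a cutoff in $v$ supported on a ball containing all the $\dot x_n(t)$). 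A secondary point needing care is the behaviour at the endpoints $t=0$ and $t=T_n$, where $D_xu_\alpha$ need not exist; since these contribute only boundary terms divided by $T_n$, and $\dot x_n$ is bounded, this is harmless, but it is worth noting that the chain-rule computation is carried out on $(0,T_n)$ and the boundary values of $\hat\phi$ are bounded independently of $n$.
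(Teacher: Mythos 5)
Your proposal follows essentially the same route as the paper's own proof: both reduce invariance to the criterion \eqref{InvCond}, use that the minimizing trajectories $x_n$ from Theorem \ref{InvarExistence} solve the discounted Euler--Lagrange equations so that $\frac{d}{dt}\phi(x_n(t),\dot x_n(t),\omega)=\nabla_{x,v}\phi\cdot W^{L_\alpha}$ along them, and conclude by integrating over $[0,T_n]$, dividing by $T_n$ and passing to the defining subsequence, the boundary terms vanishing since $\phi$ is bounded and $|\dot x_n|$ is uniformly bounded. Your additional remarks on the test-function class and the endpoints only make explicit technical points the paper leaves implicit.
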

\begin{proof}
It suffices to prove that for any bounded function $\phi(x,v,\omega)\in C^1_s(\Rr^n\times \Rr^n\times \omega)$ we have
\[
\int_{\Rr^n\times \Omega} W^{L_\alpha} \nabla_{x,v} \phi(0, v, \omega) d\mu_\alpha=0.
\]
This follows, from the identity
\begin{align*}
\phi(x_n(T_n), \dot x_n(T_n), \omega)-\phi(x_n(0), \dot x_n(0), \omega) &=\int_0^{T_n} \frac{d}{dt}\phi(x_n(t), \dot z_n(t), \omega)\\
&=\int_0^{T_n} X^{L_\alpha}\frac{\partial \phi}{\partial x} + Y^{L_\alpha} \frac{\partial \phi}{\partial v}, 
\end{align*}
dividing by $T_n$ and letting $n\to \infty$.
\end{proof}

\section{Graph property, regularity and stationary Mather measures}
\label{graph}

In this section establish that the discounted Mather measures are supported in a graph of a (partially) Lipschitz function.
As we are using similar techniques to \cite{EGom1} (see also \cite{BG}) we will present in this section the main differences and technical points and postpone to Appendix \ref{apC}
the detailed proofs. We will the discounted Mather measures to construct a stationary Mather measure invariant under the Euler-Lagrange flow. 

We assume that
\[
L(x+y, v, \omega) -L(x, v, \omega) \leq (c +c L) |y|
\]
\begin{lemma}\label{EstabilityAlphaViscSolutions}  
Let $u_{\alpha}$ be the viscosity solution of  $H(0, D_{x}u_{\alpha}(0, \omega), \omega) + \alpha u_{\alpha}(0, \omega)=0 $ given by Proposition \ref{solviscalpha}. 
Then $$ \lim_{\alpha \to 0}  \alpha  u_{\alpha}(0, \omega)$$
does not depend on $\omega$.
\end{lemma}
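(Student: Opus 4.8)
The plan is to show that $\alpha u_\alpha(0,\omega)$ is uniformly Lipschitz in $\omega$ \emph{with a constant independent of $\alpha$}, and then combine this with the fact (Corollary \ref{CritValueDiscLambda=0}) that $\int_\Omega \alpha u_\alpha(0,\omega)\, d\nu = \bar H_\alpha$ is, up to the choice of trace $\nu$, a number we can control, plus the uniform transitivity of the action, to pin down the limit as a constant.

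First I would establish the key estimate: the oscillation of $\alpha u_\alpha(0,\cdot)$ over $\Omega$ tends to $0$ as $\alpha\to 0$. The starting point is the stationarity identity $u_\alpha(x,\omega) = u_\alpha(0,\tau_x\omega)$ together with the uniform Lipschitz bound in $x$ from Proposition \ref{EstabilityViscositySol}: there is $K$ independent of $\alpha$ with $|u_\alpha(x,\omega) - u_\alpha(y,\omega)| \le K|x-y|$. Hence $|u_\alpha(0,\tau_x\omega) - u_\alpha(0,\omega)| \le K|x|$ for all $x$. Now given $\omega_1,\omega_2\in\Omega$ and $\varepsilon>0$, uniform transitivity gives $M = M(\varepsilon)$ and $z$ with $|z|<M$ and $d(\tau_z\omega_1,\omega_2)<\varepsilon$. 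This alone only bounds $u_\alpha(0,\tau_z\omega_1) - u_\alpha(0,\omega_1)$ by $KM$, which is not yet small; but multiplying by $\alpha$ we get $|\alpha u_\alpha(0,\tau_z\omega_1) - \alpha u_\alpha(0,\omega_1)| \le \alpha K M(\varepsilon)$. Combining with the continuity in $\omega$ (Proposition \ref{solviscalpha} gives Lipschitz continuity in $\omega$ of $u_\alpha(0,\cdot)$, hence of $\alpha u_\alpha(0,\cdot)$, which lets us replace $\tau_z\omega_1$ by $\omega_2$ up to an error controlled by $\varepsilon$ times that Lipschitz constant), we obtain
\[
|\alpha u_\alpha(0,\omega_1) - \alpha u_\alpha(0,\omega_2)| \le \alpha K M(\varepsilon) + (\text{error from }\varepsilon).
\]
The subtle point is that the $\omega$-Lipschitz constant from Proposition \ref{solviscalpha} is only bounded by $K/\alpha$, so the $\varepsilon$-error is of order $(K/\alpha)\varepsilon$; to make this work one must choose $\varepsilon = \varepsilon(\alpha)$ going to $0$ fast enough (e.g. $\varepsilon = \alpha^2$) so that $(K/\alpha)\varepsilon\to 0$, while simultaneously $\alpha K M(\varepsilon(\alpha))\to 0$. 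This requires knowing that $M(\varepsilon)$ does not blow up too fast; since we are free to send $\alpha\to0$ along any sequence and $M$ is a fixed function of $\varepsilon$, for each fixed $\varepsilon$ we have $\alpha K M(\varepsilon)\to 0$, so a diagonal argument (first fix $\varepsilon$, send $\alpha\to0$, then $\varepsilon\to0$) handles the two terms in the right order and shows $\mathrm{osc}_\Omega\,\alpha u_\alpha(0,\cdot)\to 0$.

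Then I would conclude: since $\alpha u_\alpha$ is uniformly bounded (Proposition \ref{EstabilityViscositySol}), along any subsequence $\alpha_k\to 0$ we may pass to a further subsequence along which $\alpha_k u_{\alpha_k}(0,\cdot)$ converges uniformly (the oscillation going to zero forces the limit to be a constant function of $\omega$). It remains to check the limit constant is independent of the subsequence; this follows because $\int_\Omega \alpha_k u_{\alpha_k}(0,\omega)\,d\nu = \bar H_{\alpha_k}$ and $\bar H_\alpha$ has a limit as $\alpha\to0$ (it is monotone/bounded, or: the dual formula \eqref{dualf} shows $\bar H_\alpha$ is squeezed), so the constant must equal $\lim_{\alpha\to0}\bar H_\alpha$ regardless of the subsequence.

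\textbf{Main obstacle.} The delicate step is managing the competition between the two error terms: the transitivity error $\alpha K M(\varepsilon)$ wants $\varepsilon$ large (so $M(\varepsilon)$ small), while the $\omega$-continuity error $(K/\alpha)\varepsilon$ wants $\varepsilon$ small. One cannot pick a single $\varepsilon$; the resolution is the two-stage limit (fix $\varepsilon$, let $\alpha\to 0$ first, killing $\alpha K M(\varepsilon)$, then let $\varepsilon\to 0$), which is legitimate precisely because $M(\varepsilon)$ is a fixed quantity not depending on $\alpha$. Making this ordering rigorous — and verifying that the $\omega$-Lipschitz estimate of Proposition \ref{solviscalpha} is exactly what is needed to absorb the transitivity approximation — is the crux of the argument.
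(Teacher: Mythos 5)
Your overall strategy is the same as the paper's: use stationarity together with the uniform $x$-Lipschitz bound (so that a shift by $|y|\le M(\varepsilon)$ costs only $\alpha K M(\varepsilon)$ after multiplying by $\alpha$), and use uniform transitivity plus the $\omega$-Lipschitz estimate of Proposition \ref{solviscalpha} to pass from $\tau_z\omega_1$ to $\omega_2$. However, your error bookkeeping in the second step is wrong, and it is precisely the point on which your write-up hinges. Proposition \ref{solviscalpha} gives $|u_\alpha(0,\tau_z\omega_1)-u_\alpha(0,\omega_2)|\le (K/\alpha)\,d(\tau_z\omega_1,\omega_2)$, so after multiplication by $\alpha$ the replacement error for $\alpha u_\alpha$ is $\alpha\cdot(K/\alpha)\,\varepsilon=K\varepsilon$, uniform in $\alpha$ --- not $(K/\alpha)\varepsilon$ as you assert. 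Hence there is no competition between the two error terms and no need to couple $\varepsilon$ with $\alpha$: one simply gets $|\alpha u_\alpha(0,\omega_1)-\alpha u_\alpha(0,\omega_2)|\le \alpha K M(\varepsilon)+K\varepsilon$, sends $\alpha\to0$ for fixed $\varepsilon$, and then sends $\varepsilon\to0$. That is exactly the paper's proof.

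As written, your argument is internally inconsistent: if the error really were $(K/\alpha)\varepsilon$, the two-stage limit you invoke (fix $\varepsilon$, let $\alpha\to0$ first) would fail, since $(K/\alpha)\varepsilon\to\infty$ for fixed $\varepsilon$; and your alternative choice $\varepsilon(\alpha)=\alpha^2$ would require $\alpha K M(\alpha^2)\to0$, which is unjustified because uniform transitivity gives no control whatsoever on how fast $M(\varepsilon)$ may blow up as $\varepsilon\to0$. So the ``main obstacle'' you identify is spurious, and the two resolutions you offer are, respectively, unnecessary (the diagonal ordering, which works only once the error is correctly computed as $K\varepsilon$) and unsupported (the $\varepsilon(\alpha)$ coupling). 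Once the factor of $\alpha$ is accounted for, your argument closes and coincides with the paper's. A minor additional remark: the appeal to Corollary \ref{CritValueDiscLambda=0} and to $\lim_{\alpha\to0}\bar H_\alpha$ to identify the limit across subsequences is not needed for this lemma --- the statement only requires that the limit be independent of $\omega$, i.e.\ that the oscillation of $\alpha u_\alpha(0,\cdot)$ over $\Omega$ vanish; the identification of the constant with $\Hh$ is the content of the following lemma in the paper.
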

\begin{proof}
We know that $\alpha u_{\alpha}$ is uniformly bounded, so $\alpha u_{\alpha}(0, \omega) \to \xi( \omega)$ pointwise for some function. On the other hand, fixed $\omega_{0} \in \Omega$ we know that $u_{\alpha}(y, \omega_{0})$ is uniformly Lipschitz in $x$, uniformly as $\alpha\to 0$, that is,
$$| u_{\alpha}(x_{1}, \omega_{0}) - u_{\alpha}(x_{2}, \omega_{0}) | < C |x_{1} - x_{2} |.$$
Thus, if $|y| < R$ then,
$$\lim_{\alpha \to 0} | \alpha u_{\alpha}(y, \omega_{0}) -\alpha u_{\alpha}(0, \omega_{0}) | < \lim_{\alpha \to 0}\alpha C |y | = 0,$$
that is, $\lim_{\alpha \to 0} \alpha u_{\alpha}(0, \tau_{y}\omega_{0}) = \lim_{\alpha \to 0} \alpha u_{\alpha}(0, \omega_{0}) $ for $|y| < R$.

From Proposition \ref{solviscalpha} we know that $u_{\alpha}(0, \omega)$ is Lipschitz in $\omega$ with Lipschitz constant $K /\alpha$, that is,
$$| u_{\alpha}(0, \omega_{1}) - u_{\alpha}(0, \omega_{2}) | < \frac{K}{\alpha}  d(\omega_{1} , \omega_{2}).$$
Consider $\varepsilon >0$ and $y \in \mathbb{R}^{n}, \text{ such that } d(\tau_{y} \omega_{0},  \omega_{1}) < \varepsilon$. Observe that,
$$| \alpha u_{\alpha}(0, \omega_{0}) -\alpha u_{\alpha}(0, \omega_{1}) | \leq $$
$$ \leq | \alpha u_{\alpha}(0, \omega_{0}) - \alpha u_{\alpha}(0, \tau_{y} \omega_{0})  | +  |\alpha u_{\alpha}(0, \tau_{y} \omega_{0}) - \alpha u_{\alpha}(0, \omega_{1}) | \leq $$
$$ \leq | \alpha u_{\alpha}(0, \omega_{0}) - \alpha u_{\alpha}(0, \tau_{y} \omega_{0})  | +\alpha \frac{K}{\alpha} d(\tau_{y} \omega_{0},  \omega_{1}).$$
Sending $\alpha \to 0$, and then $\varepsilon\to 0$ we get $\lim_{\alpha \to 0}\alpha u_{\alpha}(0, \omega_{0}) =\lim_{\alpha \to 0} \alpha u_{\alpha}(0, \omega_{1})$. Thus, $\xi( \omega)$ is constant.
\end{proof}

\begin{lemma}
Let
$u_{\alpha}$ be the viscosity solution of  $H(0, D_{x}u_{\alpha}(0, \omega), \omega) + \alpha u_{\alpha}(0, \omega)=0 $ given by Proposition \ref{solviscalpha}. 
Then
$$ \lim_{\alpha \to 0}  \alpha  u_{\alpha}(0, \omega)=\Hh,$$
where 
\[
\Hh=\inf \int_{\Rr^n\times \Omega} L d\mu, 
\]
and the infimum is taken over all stationary holonomic measures.
\end{lemma}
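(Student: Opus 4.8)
The plan is to prove the two inequalities $\lim_{\alpha\to 0}\alpha u_{\alpha}(0,\omega)\le \Hh$ and $\lim_{\alpha\to 0}\alpha u_{\alpha}(0,\omega)\ge \Hh$ separately, after first upgrading the pointwise convergence in Lemma \ref{EstabilityAlphaViscSolutions} to uniform convergence on $\Omega$. Denote $\xi:=\lim_{\alpha\to 0}\alpha u_{\alpha}(0,\omega)$, which by Lemma \ref{EstabilityAlphaViscSolutions} is a constant.

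\emph{Step 1 (uniform convergence).} The family $\{\alpha u_{\alpha}(0,\cdot)\}_{\alpha>0}$ is equibounded by Proposition \ref{EstabilityViscositySol} and equi-Lipschitz on $\Omega$: by Proposition \ref{solviscalpha} the function $u_{\alpha}(0,\cdot)$ is Lipschitz in $\omega$ with constant $K/\alpha$, so $\alpha u_{\alpha}(0,\cdot)$ is $K$-Lipschitz with $K$ independent of $\alpha$. Since $\Omega$ is a compact metric space, Arzel\`a--Ascoli applies: any sequence $\alpha_{n}\to 0$ has a subsequence along which $\alpha_{n}u_{\alpha_{n}}(0,\cdot)$ converges uniformly, and by the pointwise statement of Lemma \ref{EstabilityAlphaViscSolutions} the limit is necessarily the constant $\xi$. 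As the limit does not depend on the subsequence, $\alpha u_{\alpha}(0,\cdot)\to\xi$ uniformly on $\Omega$ as $\alpha\to 0$. Consequently, for \emph{any} family of probability measures $\nu_{\alpha}$ on $\Omega$ one has $\alpha\int_{\Omega}u_{\alpha}(0,\omega)\,d\nu_{\alpha}(\omega)\to\xi$.

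\emph{Step 2 ($\xi\le\Hh$).} Let $\mu$ be an arbitrary stationary holonomic probability measure on $\Rr^n\times\Omega$ and let $\nu$ be its projection onto $\Omega$. For every $\varphi\in C^{1}_{s}$ we have $\int v\cdot D_{x}\varphi(0,\omega)\,d\mu=0$ by \eqref{hnom}, hence $\int A^{v}\varphi\,d\mu=-\alpha\int\varphi(0,\omega)\,d\mu=-\alpha\int_{\Omega}\varphi(0,\omega)\,d\nu$, so $\mu$ satisfies the discounted holonomy constraint \eqref{dhnom} with trace $\nu$. Therefore $\Hh_{\alpha}\le\int_{\Rr^n\times\Omega}L(0,v,\omega)\,d\mu$, and by Corollary \ref{CritValueDiscLambda=0} this reads $\alpha\int_{\Omega}u_{\alpha}(0,\omega)\,d\nu\le\int L\,d\mu$. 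Letting $\alpha\to 0$ and using Step 1 gives $\xi\le\int L\,d\mu$; taking the infimum over all stationary holonomic $\mu$ yields $\xi\le\Hh$.

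\emph{Step 3 ($\xi\ge\Hh$).} Let $\mu_{\alpha}$ be the holonomic discounted stationary Mather measure constructed in Theorem \ref{InvarExistence}, with trace $\nu_{\alpha}$. Being minimizing, it satisfies $\int L\,d\mu_{\alpha}=\Hh_{\alpha}=\alpha\int_{\Omega}u_{\alpha}(0,\omega)\,d\nu_{\alpha}$ by Corollary \ref{CritValueDiscLambda=0}. On the other hand $\mu_{\alpha}$ satisfies the undiscounted holonomy constraint, hence is admissible for the stationary Mather problem, so $\int L\,d\mu_{\alpha}\ge\Hh$. Combining, $\alpha\int_{\Omega}u_{\alpha}(0,\omega)\,d\nu_{\alpha}\ge\Hh$; letting $\alpha\to 0$ and using Step 1 gives $\xi\ge\Hh$. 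Together with Step 2 this proves $\lim_{\alpha\to 0}\alpha u_{\alpha}(0,\omega)=\xi=\Hh$.

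The only genuinely technical point is Step 1, the passage from the pointwise convergence of Lemma \ref{EstabilityAlphaViscSolutions} to uniform convergence; everything else is bookkeeping with the duality identities already established. I expect the equi-Lipschitz bound furnished by Proposition \ref{solviscalpha} (Lipschitz constant $K/\alpha$, hence $K$ after multiplying by $\alpha$) to be exactly what makes the Arzel\`a--Ascoli argument go through, and hence to be the crux of the proof.
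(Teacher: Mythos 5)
Your proof is correct and follows essentially the same route as the paper: both directions rest on the identity $\Hh_\alpha=\alpha\int u_\alpha\,d\nu$ from Corollary \ref{CritValueDiscLambda=0}, on the admissibility of stationary holonomic measures for the discounted problem, and on the admissibility of the holonomic discounted Mather measures of Theorem \ref{InvarExistence} for the stationary problem. Your Step 1 (Arzel\`a--Ascoli upgrade of the pointwise limit to uniform convergence, using the $K$-Lipschitz bound on $\alpha u_\alpha(0,\cdot)$) makes explicit a point the paper leaves implicit when it passes to the limit against the $\alpha$-dependent measures $\mu_\alpha$, and is a welcome addition rather than a deviation.
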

\begin{proof}
Denote by $\tilde H$ the limit as $\alpha\to 0$ of $\alpha u_\alpha$, which is constant by the previous lemma.   
Let $\mu_\alpha$ be a holonomic discounted stationary Mather measure. 
Then, because $\mu_\alpha$ is holonomic we have
\[
\Hh\leq \lim_{\alpha\to 0} \int_{\Rr^n\times \Omega} L d\mu_\alpha =\lim_{\alpha\to 0} \alpha \int_{\Rr^n\times \Omega} u_\alpha d\mu_\alpha=\tilde H.
\]
Let $\mu$ be a stationary Mather measure. 
Then, because $\mu$ is a discounted holonomic measure with trace $\mu$ we have
\[
\Hh=\int_{\Rr^n\times \Omega} L d\mu\geq \alpha \int_{\Rr^n\times \Omega} u_\alpha d\mu \to \tilde H, 
\]
as $\alpha\to 0$. This shows that $\tilde H=\Hh$.
\end{proof}

\begin{lemma}\label{EstabilityAlphaMeasures}  
Let $\mu_{\alpha}$ be a sequence of discounted stationary Mather measures with trace $\nu_{\alpha}$. Suppose that $\mu_{\alpha} \to \mu$ when $ \alpha \to 0 $, then $\mu$ is a stationary
Mather measure.
\end {lemma}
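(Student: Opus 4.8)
The plan is to verify that the limit measure $\mu$ is a probability measure on $\Rr^n\times\Omega$, that it satisfies the undiscounted holonomy constraint \eqref{hnom}, and that $\int_{\Rr^n\times\Omega}L(0,v,\omega)\,d\mu=\Hh$; together these three facts are exactly the assertion that $\mu$ is a stationary Mather measure. The whole argument is a passage to the limit $\alpha\to0$, and the heart of it is controlling the action uniformly in $\alpha$.

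\emph{A uniform action bound, and $\mu$ is a probability.} Since $\mu_\alpha$ minimizes the discounted problem with trace $\nu_\alpha$, Corollary \ref{CritValueDiscLambda=0} gives $\int L(0,v,\omega)\,d\mu_\alpha=\alpha\int_\Omega u_\alpha(0,\omega)\,d\nu_\alpha$, where $u_\alpha$ is the viscosity solution of \eqref{halfie}. By Proposition \ref{EstabilityViscositySol} the family $\{\alpha u_\alpha(0,\cdot)\}$ is uniformly bounded, and by Proposition \ref{solviscalpha} each $\alpha u_\alpha(0,\cdot)$ is Lipschitz on $\Omega$ with constant at most $K$; hence the family is precompact in $C(\Omega)$, and since by the preceding lemma it converges pointwise to the constant $\Hh$, the Arzel\`a--Ascoli theorem upgrades this to uniform convergence. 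Therefore $\int L\,d\mu_\alpha=\alpha\int u_\alpha(0,\omega)\,d\nu_\alpha\to\Hh$, and in particular $\sup_{\alpha>0}\int L\,d\mu_\alpha<\infty$. Combined with the superlinearity of $L$ relative to $\gamma$ in \eqref{gammadef}, this bound prevents mass from escaping as $|v|\to\infty$, so the limit $\mu$ — taken in the duality with $C_\gamma^0(\Rr^n\times\Omega)$, which contains the constant function $1$ — is again a probability measure.

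\emph{Holonomy in the limit.} Fix $\varphi\in C^1_s(\Rr^n\times\Omega)$. Rewriting the discounted constraint \eqref{dhnom} for $\mu_\alpha$ gives
\[
\int_{\Rr^n\times\Omega} v\cdot D_x\varphi(0,\omega)\,d\mu_\alpha
=\alpha\int_{\Rr^n\times\Omega}\varphi(0,\omega)\,d\mu_\alpha-\alpha\int_\Omega\varphi(0,\omega)\,d\nu_\alpha .
\]
Because $\varphi(0,\cdot)$ is continuous on the compact set $\Omega$, it is bounded, so the right-hand side is $O(\alpha)\to0$. On the left, $(v,\omega)\mapsto v\cdot D_x\varphi(0,\omega)$ is continuous and, since $\|D_x\varphi(0,\cdot)\|_\infty<\infty$ and $|v|/\gamma(v)\to0$, it belongs to $C_\gamma^0(\Rr^n\times\Omega)$; hence $\mu_\alpha\cws\mu$ yields $\int v\cdot D_x\varphi(0,\omega)\,d\mu=0$, which is \eqref{hnom}. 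Thus $\mu$ is admissible for the stationary Mather problem, so $\int L\,d\mu\ge\Hh$.

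\emph{Lower semicontinuity and conclusion.} For each $R>0$ the function $\min\{L(0,v,\omega),R\}$ is bounded and continuous, hence lies in $C_\gamma^0$, so weak-$*$ convergence gives $\int\min\{L,R\}\,d\mu=\lim_\alpha\int\min\{L,R\}\,d\mu_\alpha\le\liminf_\alpha\int L\,d\mu_\alpha=\Hh$; letting $R\to\infty$ and using monotone convergence yields $\int L\,d\mu\le\Hh$. Together with the previous step this forces $\int L\,d\mu=\Hh$, so $\mu$ is a stationary Mather measure. The only genuinely delicate points are the two uses of the coercivity of $L$ relative to $\gamma$ — one to keep $\mu$ a probability measure in the limit, one for lower semicontinuity of the action — and the upgrade from pointwise to uniform convergence of $\alpha u_\alpha$, which is what makes $\int L\,d\mu_\alpha\to\Hh$; all three are routine in this setting but should be spelled out.
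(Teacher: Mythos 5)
Your proof is correct and follows essentially the same route as the paper's: pass the discounted holonomy constraint \eqref{dhnom} to the limit to get \eqref{hnom}, and identify the limiting action with $\Hh$ via Corollary \ref{CritValueDiscLambda=0} and the convergence $\alpha u_\alpha\to\Hh$. The only difference is that you carefully justify the two limit interchanges the paper's terse proof takes for granted — replacing the asserted equality $\int L\,d\mu=\lim_\alpha\int L\,d\mu_\alpha$ (which does not follow from weak-$*$ convergence alone, as $L\notin C_\gamma^0$) by the truncation/lower-semicontinuity bound plus the admissibility bound $\int L\,d\mu\ge\Hh$, and upgrading the pointwise convergence of $\alpha u_\alpha$ to uniform convergence so the varying traces $\nu_\alpha$ cause no trouble — which is a welcome tightening rather than a different approach.
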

\begin{proof}
First we must to prove that $\mu$ is a holonomic probability measure. In fact, for any $\varphi\in C^1_s$,
$$\int_{\mathbb{R}^{n} \times \Omega} v \cdot D_{x}\varphi(0, \omega) d\mu_{\alpha} = \alpha \int_{\mathbb{R}^{n} \times \Omega} \varphi(0, \omega) d\mu_{\alpha} - \alpha  \int_{ \Omega} \varphi(0, \omega) d\nu_{\alpha}  \to 0,$$
when $\alpha \to 0$.

Using Corollary \ref{CritValueDiscLambda=0} we get 
$$\int_{\mathbb{R}^{n} \times \Omega} L d\mu= \lim_{\alpha \to 0} \int_{\mathbb{R}^{n} \times \Omega} L d\mu_{\alpha}= \lim_{\alpha \to 0} \bar{H}_{\alpha}= \lim_{\alpha \to 0} \int_{ \Omega} \alpha  u_{\alpha}(0, \omega)  d\nu_{\alpha}(\omega) = \bar{H}.$$  
Thus $\mu$ is a Mather measure. 
\end{proof}

\begin{theorem}\label{GraphPropMatherMeasures}  
Let $\mu_{\alpha}$ be a discounted Mather measure with trace $\nu_{\alpha}$ (or if $\alpha=0$ a stationary Mather measure).
Then  $\mu_{\alpha}$ is supported in a graph, that is, there exists a measurable function $V_{\alpha}: \Omega \to  \mathbb{R}^{n}$ such that,
$$\supp \mu_{\alpha} = \{ (v, \omega) \in \mathbb{R}^{n} \times \Omega |  v=V_{\alpha}(\omega) \}.$$
\end {theorem}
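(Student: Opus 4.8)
The plan is to adapt the classical Mather graph theorem, in the version used in \cite{EGom1,BG}, to the stationary setting, exploiting the $C^2$ regularity estimates for the viscosity solution $u_\alpha$ and the characterization of optimal velocities. First I would recall from Section \ref{sfc} (done there rigorously in later appendices, and formally in Section \ref{sfc}) that for the holonomic discounted stationary Mather measure $\mu_\alpha$ one has, for $\mu_\alpha$-a.e.\ $(v,\omega)$, the identity $v = -D_pH(0, D_x u_\alpha(0,\omega), \omega)$; this is proved exactly as in the formal computation, using that $L(0,v,\omega) + v\cdot D_x u_\alpha(0,\omega) \ge -H(0, D_x u_\alpha(0,\omega),\omega) = \alpha u_\alpha(0,\omega)$ with equality forced on $\supp\mu_\alpha$ by optimality (Corollary \ref{CritValueDiscLambda=0}) together with the discounted holonomy constraint. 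On the set where $D_x u_\alpha(0,\omega)$ exists this already defines the candidate function $V_\alpha(\omega) := -D_pH(0, D_x u_\alpha(0,\omega),\omega)$, and measurability of $V_\alpha$ follows since $u_\alpha$ is Lipschitz in $\omega$ (Proposition \ref{solviscalpha}) and hence differentiable a.e.\ in $x$ with measurable derivative.

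The substantive point is that $\mu_\alpha$-almost every $\omega$ is in fact a point of differentiability of $x\mapsto u_\alpha(x,\omega)$ at $x=0$, so that $V_\alpha$ is genuinely defined $\nu_\alpha$-a.e.\ (where $\nu_\alpha$ is the projection of $\mu_\alpha$ to $\Omega$, which by holonomy coincides with the relevant marginal). Here I would use the $L^\infty$ second-derivative bound: following Section \ref{sfc}, differentiate the Hamilton-Jacobi equation \eqref{halfie} twice, test against $\Psi'(D^2_{xx}u_\alpha)$ using that $\mu_\alpha$ is holonomic so the transport term $\int v\cdot D_x(\text{anything stationary})\,d\mu_\alpha$ vanishes, and combine with the semiconcavity of viscosity solutions of \eqref{halfie} (standard, \cite{Bardi,BG}) to conclude that $|D^2_{xx}u_\alpha| \le C$ holds $\mu_\alpha$-a.e., with $C$ independent of $\alpha$ (using $\alpha u_\alpha$ uniformly bounded and assumption \eqref{lipep}). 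In particular $u_\alpha$ is twice differentiable, hence once differentiable, at $\mu_\alpha$-a.e.\ $\omega$, which is what is needed. Alternatively, and perhaps more cleanly, one invokes the fact recalled after \eqref{DynProgPrinciple}: along an optimal trajectory $x_n(t)$ used to build $\mu_\alpha$ in Theorem \ref{InvarExistence}, $D_x u_\alpha(x_n(t))$ exists for $0<t<T_n$ and $\dot x_n(t) = -D_pH(D_x u_\alpha(x_n(t)), x_n(t))$, so the occupation-measure limit is supported on the closure of a graph; the regularity estimate is what upgrades "closure of a graph" to "graph of a measurable function".

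Finally, for the case $\alpha = 0$ I would argue by stability: take $\mu_{\alpha_k} \to \mu$ along a subsequence (such a limit is a stationary Mather measure by Lemma \ref{EstabilityAlphaMeasures}), observe that the constant $C$ in the bound $|V_{\alpha_k}| \le C$ and in the second-derivative estimate is uniform in $\alpha$, pass to the limit in the relations $v = V_{\alpha_k}(\omega)$ on $\supp\mu_{\alpha_k}$, and use uniform convergence of $D_x u_{\alpha_k}$ on compact sets along $\mu_{\alpha_k}$ to identify the limiting graph $V_0(\omega) = -D_pH(0, D_x u(0,\omega),\omega)$ for $u = \lim u_{\alpha_k}$. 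The main obstacle I anticipate is the bookkeeping needed to make the "differentiate twice and test" step fully rigorous in the stationary framework — in particular, justifying that the stationary convolution $u^\varepsilon = u_\alpha * \eta_\varepsilon$ from Remark \ref{viscsolsmooth} can be used to test against $\mu_\alpha$ and that the error terms vanish uniformly using only the Lipschitz bound and \eqref{lipep} — and ensuring the constant $C$ really is $\alpha$-independent; this is exactly the content deferred to Appendix \ref{apC}, and I would follow \cite{EGom1} closely there.
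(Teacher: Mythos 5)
There is a genuine gap of scope. The theorem is stated for an \emph{arbitrary} discounted Mather measure with trace $\nu_\alpha$ (and, for $\alpha=0$, an arbitrary stationary Mather measure), whereas your argument only applies to the special \emph{holonomic} discounted measures of Theorem \ref{InvarExistence} and to their weak limits. The identity $v=-D_pH(0,D_xu_\alpha(0,\omega),\omega)$ on $\supp\mu_\alpha$, the vanishing of the transport term when you test the twice-differentiated equation, and the $\alpha$-uniform second-derivative bound all use that the trace coincides with the projection of $\mu_\alpha$ onto $\Omega$ (this is exactly the extra hypothesis under which Section \ref{sfc} and Appendix \ref{apC} operate); a general minimizer of \eqref{hbaralf} subject to \eqref{dhnom} with a prescribed $\nu$ need not have this property, and your $\alpha=0$ stability argument only reaches those stationary Mather measures that arise as limits of your particular $\mu_{\alpha_k}$, not all of them. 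There is also a technical soft spot in ``pass to the limit in the relations $v=V_{\alpha_k}(\omega)$ on $\supp\mu_{\alpha_k}$'': supports are not stable under weak convergence, so this step needs the uniform Lipschitz graph estimates rather than a purely measure-theoretic limit.

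The idea you are missing is the one the paper actually uses, and it needs none of the viscosity-solution machinery: the Lagrangian is strictly convex in $v$ while the (discounted) holonomy constraint \eqref{dhnom} is affine in $v$. Disintegrate $\mu_\alpha$ with respect to its $\omega$-marginal $\theta_\alpha$, set $V_\alpha(\omega)=\int v\,d\mu_{\alpha,\omega}(v)$ (finite $\theta_\alpha$-a.e.\ by superlinearity and $\int L\,d\mu_\alpha<\infty$, and measurable in $\omega$), and let $\tilde\mu_\alpha$ be the measure obtained by replacing each conditional law $\mu_{\alpha,\omega}$ by the Dirac mass at $V_\alpha(\omega)$. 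Since $\int v\cdot D_x\varphi(0,\omega)\,d\mu_\alpha$ is linear in $v$ and $\int\varphi(0,\omega)\,d\mu_\alpha$ depends only on the $\omega$-marginal, $\tilde\mu_\alpha$ satisfies \eqref{dhnom} with the same trace $\nu_\alpha$; by Jensen's inequality and strict convexity, $\int L\,d\tilde\mu_\alpha\le\int L\,d\mu_\alpha$ with strict inequality unless $\mu_{\alpha,\omega}$ is a Dirac for $\theta_\alpha$-a.e.\ $\omega$. Minimality forces equality, hence $\mu_\alpha$ is supported on the graph $v=V_\alpha(\omega)$; the same argument covers $\alpha=0$ verbatim. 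Your route, suitably restricted, does yield the graph property (and additionally its Lipschitz regularity, which is the content of Corollary \ref{ThGraphLipsLimitMeasure}) for the holonomic discounted measures, but as a proof of Theorem \ref{GraphPropMatherMeasures} as stated it does not suffice.
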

\begin{proof}
As in \cite{BG}, for instance, we just observe that the result follows from the fact that the Lagrangian is strictly convex in $v$, whereas the discounted holonomy
constraint is linear in $v$. 
\end{proof}

Since the holonomic discounted measures are also holonomic, the same techniques in \cite{EGom1} (see also \cite{BG}) can be adapted to establish the following regularity result:
\begin{theorem}\label{LipschitzOfViscSol}
Let $\mu_{\alpha}$ be a holonomic discounted Mather measure.  If $u_{\alpha}$ is a viscosity solution of \eqref{DiscHJE}, then for each $y \in \mathbb{R}$, 
$$| D_{x}u_{\alpha}(y, \omega)   -  D_{x}u_{\alpha}(0, \omega) | \leq  C  | y |,$$
$\theta$ almost everywhere and uniformly in $\alpha$.
\end {theorem}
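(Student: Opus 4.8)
The plan is to mimic the argument of \cite{EGom1} adapted to the stationary setting, using the holonomy of $\mu_\alpha$ as the key structural ingredient. First I would regularize: let $u^\varepsilon = u_\alpha * \eta_\varepsilon$ be the mollification in the $x$-variable described in Remark \ref{viscsolsmooth}, which is $C^1_s$ and, by the convexity of $H$ together with the uniform Lipschitz bounds from Proposition \ref{EstabilityViscositySol}, satisfies $H(x, D_x u^\varepsilon, \omega) + \alpha u^\varepsilon \leq o_\varepsilon(1)$ pointwise. Because $\mu_\alpha$ is supported on the graph $v = V_\alpha(\omega)$ (Theorem \ref{GraphPropMatherMeasures}) and is an occupation measure of the minimizing trajectories $x_n$, with $\dot x_n = -D_p H(D_x u_\alpha(x_n(t)), x_n(t))$, one shows that along the support $v = -D_x u_\alpha$, $\mu_\alpha$-a.e.\ (the argument is exactly the contradiction argument of Section \ref{sfc}: on a positive-measure set where $L(0,v,\omega) + v\cdot D_x u_\alpha > -H(D_x u_\alpha,0,\omega)$ one would integrate the strict inequality against $\mu_\alpha$ and contradict Corollary \ref{CritValueDiscLambda=0}). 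Hence the holonomy constraint \eqref{hnom} for $\mu_\alpha$ becomes, for every $\varphi \in C^1_s$,
\[
\int_{\Rr^n\times\Omega} \big( D_x u_\alpha(0,\omega)\cdot D_x\varphi(0,\omega) + \alpha\, \varphi(0,\omega)\big)\, d\mu_\alpha = \alpha \int_\Omega \varphi(0,\omega)\, d\nu_\alpha.
\]

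Next I would differentiate the Hamilton-Jacobi equation twice formally (justified on the mollified level, then passing $\varepsilon\to 0$ using the $L^2(\theta)$ and $L^\infty(\theta)$ bounds of Section \ref{sfc} which carry over since the holonomic discounted measure behaves like the periodic $\theta$ there). Differentiating $H(x, D_x u_\alpha, \omega) + \alpha u_\alpha = 0$ once in the direction of the trajectory and using $\dot x = -D_p H$ yields an equation of the form $\frac{d}{dt} D_x u_\alpha(x(t)) = D_x H + \alpha D_x u_\alpha$ along orbits; differentiating again and testing against a cutoff $\Psi'(D^2 u_\alpha)$ exactly as in Section \ref{sfc} produces the one-sided bound $D^2_{xx} u_\alpha > -\lambda$ holding $\theta$-a.e., with $\lambda$ independent of $\alpha$ (here one uses the stationarity hypothesis \eqref{lipep}/the assumed $L(x+y,v,\omega)-L(x,v,\omega)\le (c+cL)|y|$ to control the $D_x H$ term uniformly). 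The semiconcavity of viscosity solutions of the discounted equation (standard, \cite{Bardi,BG}) gives the matching upper bound $D^2_{xx} u_\alpha < \beta$. Combining, $|D^2_{xx} u_\alpha| \le C$ on the support, with $C$ independent of $\alpha$.

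From the bound $|D^2_{xx} u_\alpha| \leq C$ holding $\theta$-a.e., the desired conclusion follows: for $\theta$-a.e.\ $\omega$ and every $y \in \Rr^n$ (more carefully, for $y$ ranging over the trajectory through $\omega$, and then extended by the stationarity/transitivity argument used in Lemma \ref{EstabilityAlphaViscSolutions}), we integrate $D^2_{xx} u_\alpha$ along the segment from $0$ to $y$, to get $|D_x u_\alpha(y,\omega) - D_x u_\alpha(0,\omega)| \le C|y|$. The technical point requiring care, and what I expect to be the main obstacle, is making rigorous the twice-differentiation of the Hamilton-Jacobi equation: $u_\alpha$ is only Lipschitz and semiconcave, so $D^2_{xx} u_\alpha$ exists only as a measure; the cutoff argument of Section \ref{sfc} must be run at the mollified level $u^\varepsilon$, and one needs the commutation of mollification with the measure $\theta$ (which uses that $\mu_\alpha$ is an occupation measure, hence the mollification in $x$ corresponds to a shift $\tau_y$ absorbed by stationarity of $\mu_\alpha$) to pass to the limit and conclude the estimates hold for $u_\alpha$ itself $\theta$-a.e. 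All the genuinely new inputs beyond \cite{EGom1} — stationarity replacing periodicity, the inequality \eqref{lipep} replacing boundedness of $D_x V$, and the graph property Theorem \ref{GraphPropMatherMeasures} — are already in place, so I would relegate the routine estimates to Appendix \ref{apC} as the authors announce.
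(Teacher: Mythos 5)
Your overall plan (mollify, identify the support with $-D_xu_\alpha$, extract a second-order bound via the cutoff argument of Section \ref{sfc}) is in the spirit of the paper, but two of your concrete steps have genuine gaps. First, you propose to obtain a pointwise bound $|D^2_{xx}u_\alpha|\le C$, $\theta$-a.e., by differentiating the Hamilton--Jacobi equation twice at the mollified level and passing to the limit. This is precisely the step the paper does \emph{not} perform: $u^\varepsilon$ satisfies the equation only up to an $o_\varepsilon(1)$ error, and differentiating that inequality twice gives no control on $D^2u^\varepsilon$; moreover $D^2_{xx}u_\alpha$ of a Lipschitz, semiconcave function exists only as a measure. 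The paper's device (Theorem \ref{SemicOfViscSol}) is to replace the second derivative by the symmetric second difference: one introduces the translates $\tilde u_\alpha(x,\omega)=u_\alpha(x+h,\omega)$, $\hat u_\alpha(x,\omega)=u_\alpha(x-h,\omega)$, mollifies them, uses the uniform convexity of $H$ to bound the squared gradient differences by the second difference of the Hamiltonians, and tests the resulting pointwise inequality against $\Phi\bigl(\beta^\varepsilon(0,\omega)/|h|^2\bigr)$ with $\beta^\varepsilon=\tilde u^\varepsilon_\alpha-2u_\alpha+\hat u^\varepsilon_\alpha$; the holonomy of $\theta$ is what kills the transport term, since $\int_\Omega D_pH\cdot D_x\Psi\bigl(\beta^\varepsilon/|h|^2\bigr)\,d\theta=0$ and $\Psi(\beta^\varepsilon/|h|^2)$ is a legitimate stationary test function, whereas the $\Psi'(u_{xx})$ of Section \ref{sfc} is purely formal. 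The output is the discrete bound $|u_\alpha(h,\omega)-2u_\alpha(0,\omega)+u_\alpha(-h,\omega)|\le C|h|^2$, $\theta$-a.e., not a bound on a Hessian.

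Second, your final step (integrate $D^2_{xx}u_\alpha$ along the segment from $0$ to $y$) would fail even granting such a bound, because the estimate is only available at the base point $x=0$ for $\theta$-a.e.\ $\omega$; the intermediate points of the segment correspond to $\tau_{ty}\omega$, which need not lie in the support of $\theta$, so there is nothing to integrate. The paper instead argues with only two points: by Theorem \ref{DifViscSolut}, $D_xu_\alpha(0,\omega)$ exists $\theta$-a.e.\ (this differentiability is itself a conclusion, obtained from uniform convexity plus holonomy via the $L^2(\tilde\mu_\alpha)$ convergence of $D_xu^\varepsilon_\alpha$, so your appeal to the Section \ref{sfc} contradiction argument to identify $v=-D_xu_\alpha$ also tacitly relies on it); then, for Lebesgue-a.e.\ $y$, the second-difference bound at $0$ combined with the semiconcavity inequality at $y$ gives $(D_xu_\alpha(0,\omega)-D_xu_\alpha(y,\omega))\cdot(z-y)\le C|y|^2$ for all $|z|\le 2|y|$, and choosing $z-y$ parallel to $D_xu_\alpha(0,\omega)-D_xu_\alpha(y,\omega)$ with $|z-y|=|y|$ yields $|D_xu_\alpha(y,\omega)-D_xu_\alpha(0,\omega)|\le C|y|$, uniformly in $\alpha$ because the $\alpha$-dependent term enters only through $\alpha\beta^\varepsilon\Phi$ and is absorbed as in the formal computation.
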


The proof of this theorem since it follows (almost) exactly the same steps as in \cite{EGom1} (see also \cite{BG}) and is presented for completeness
in appendix \ref{apC}.
 The only difference
is the term $\alpha u$ in the Hamilton-Jacobi equation, which can be controlled, as discussed in section \ref{sfc}, because we are using holonomic
discounted measures. As a corollaries to the previous Theorem we have

\begin{corollary}\label{ThGraphLipsLimitMeasure}  
Let $\mu_{\alpha}$ be a holonomic discounted Mather measure. Then, there exists a function $V_\alpha: \Omega \to  \mathbb{R}^{n}$, such that
$\supp \mu_\alpha = \{ (v, \omega) \in \mathbb{R}^{n} \times \Omega |  v=V_{\alpha}(\omega) \}$.
Furthermore, $V_\alpha$ is partially Lipschitz in the following sense:
\[
|V_\alpha(\tau_y\omega)-V_\alpha(\omega)|\leq C|y|, 
\]
for all $\omega$ in the support of $\mu_\alpha$, and
$C$ is uniformly bounded as $\alpha\to 0$. 
\end {corollary}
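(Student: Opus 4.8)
The first assertion is nothing more than Theorem \ref{GraphPropMatherMeasures}, so the only new content is the partially Lipschitz bound on $V_\alpha$ with a constant that stays bounded as $\alpha\to 0$. The plan is to identify $V_\alpha$ explicitly through the viscosity solution $u_\alpha$ and then convert the interior ($x$-variable) gradient estimate of Theorem \ref{LipschitzOfViscSol} into an $\omega$-variable estimate using stationarity.

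First I would pin down the graph function. Since $\mu_\alpha$ is holonomic, the verification argument of Section \ref{sfc} applies with only notational changes: one integrates the Fenchel inequality $L(0,v,\omega)+v\cdot D_xu_\alpha(0,\omega)\ge -H(0,D_xu_\alpha(0,\omega),\omega)=\alpha u_\alpha(0,\omega)$ against $\mu_\alpha$, uses Corollary \ref{CritValueDiscLambda=0} to see that the resulting integral identity holds with equality, and concludes that $\mu_\alpha$-almost everywhere the inequality is itself an equality. Strict convexity of $L$ in $v$ then forces $v=-D_pH(0,D_xu_\alpha(0,\omega),\omega)$, i.e.
$$V_\alpha(\omega)=-D_pH(0,D_xu_\alpha(0,\omega),\omega)$$
for $\nu_\alpha$-almost every $\omega$. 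Because $u_\alpha$ and $H$ are stationary, $D_xu_\alpha(0,\tau_y\omega)=D_xu_\alpha(y,\omega)$ and $D_pH(0,q,\tau_y\omega)=D_pH(y,q,\omega)$, and hence $V_\alpha(\tau_y\omega)=-D_pH(y,D_xu_\alpha(y,\omega),\omega)$.

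Then I would estimate directly. By Proposition \ref{EstabilityViscositySol} the gradients $D_xu_\alpha(\cdot,\cdot)$ all lie in a fixed ball $B_R$ with $R$ independent of $\alpha$; since $L\in C^3$ we have $H\in C^2$, so $D_pH$ is Lipschitz on the compact set $\overline{B_1}\times B_R\times\Omega$ with a constant $C_1$ depending only on the data. Combining this with Theorem \ref{LipschitzOfViscSol},
$$|V_\alpha(\tau_y\omega)-V_\alpha(\omega)|\le C_1\bigl(|y|+|D_xu_\alpha(y,\omega)-D_xu_\alpha(0,\omega)|\bigr)\le C_1(1+C_2)\,|y|$$
for $|y|\le 1$ and $\nu_\alpha$-a.e.\ $\omega$, where $C_2$ is the uniform-in-$\alpha$ constant of Theorem \ref{LipschitzOfViscSol}; for $|y|\ge 1$ the estimate is trivial since $|V_\alpha|$ is bounded on $\supp\mu_\alpha$ uniformly in $\alpha$. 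Taking $C$ to be the maximum of $C_1(1+C_2)$ and twice that bound gives the asserted estimate with $C$ independent of $\alpha$. Finally, one must pass from ``$\nu_\alpha$-a.e.\ $\omega$'' to ``every $\omega\in\supp\mu_\alpha$'': this is done using the semiconcavity of $u_\alpha$ recalled in Section \ref{sfc} (which makes $D_xu_\alpha$ well behaved off a null set) together with the fact that the exceptional set is $\nu_\alpha$-negligible, so that the formula above furnishes a representative of $V_\alpha$ for which the bound holds on the full support; these details are the ones deferred to Appendix \ref{apC}.

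I expect the main obstacle to be, on the one hand, making the graph identification $V_\alpha=-D_pH(0,D_xu_\alpha(0,\cdot),\cdot)$ fully rigorous — this requires the verification argument together with differentiability of $u_\alpha$ at $\mu_\alpha$-almost every point of the support — and, on the other hand, the bookkeeping needed to keep every constant genuinely independent of $\alpha$, which reduces entirely to the uniform bounds already established in Proposition \ref{EstabilityViscositySol} and Theorem \ref{LipschitzOfViscSol}. By contrast, the transfer from the $x$-variable to the $\omega$-variable via stationarity is a routine computation.
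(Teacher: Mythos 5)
Your proposal is correct and follows essentially the route the paper intends: the paper states this corollary without proof as an immediate consequence of Theorem \ref{LipschitzOfViscSol}, and your reconstruction --- identifying $V_\alpha$ through $D_pH$ and $D_xu_\alpha$ (as in Theorem \ref{DifViscSolut}), using stationarity to trade $\tau_y\omega$ for the base point $y$, and then invoking the uniform-in-$\alpha$ Lipschitz estimate together with the uniform gradient bound of Proposition \ref{EstabilityViscositySol} --- is exactly the intended chain of reasoning. The only caveats are cosmetic: a sign in the argument of $D_pH$ depends on the paper's convention $H=\sup_v(-p\cdot v-L)$, and the passage from ``$\theta$-a.e.'' to ``all of $\supp\mu_\alpha$'' is a genuine (small) gap that the paper itself also leaves implicit.
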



Finally, our last result concerns the existence of stationary Mather measures invariant under the Euler-Lagrange flow. 

\begin{theorem}
\label{InvarianceFlowMeasures}
There exists a stationary Mather measure $\mu$ which is invariant under the Euler-Lagrange flow. Furthermore $\mu$ is supported on a graph.
\end{theorem}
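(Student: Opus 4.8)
The plan is to combine the discounted Mather measures constructed in Theorem \ref{InvarExistence} with the estimates from the previous section and pass to the limit $\alpha\to 0$. First I would take a sequence $\alpha_j\to 0$ and for each $\alpha_j$ let $\mu_{\alpha_j}$ be a holonomic discounted stationary Mather measure with trace $\nu_{\alpha_j}$, as constructed in Theorem \ref{InvarExistence}. By Theorem \ref{discoutedelinvar} each $\mu_{\alpha_j}$ is invariant under the discounted Euler-Lagrange flow $\Psi^{\alpha_j}$. The first point to check is tightness: by Corollary \ref{ThGraphLipsLimitMeasure} each $\mu_{\alpha_j}$ is supported on the graph $v=V_{\alpha_j}(\omega)$, and the bound $\bar H_{\alpha_j}=\alpha_j\int u_{\alpha_j}\,d\nu_{\alpha_j}=\int L\,d\mu_{\alpha_j}$ together with the superlinearity \eqref{gammadef} of $L$ in $v$ gives a uniform bound on $\int \gamma(v)\,d\mu_{\alpha_j}$; since $\gamma$ is superlinear and $\Omega$ is compact, the family $\{\mu_{\alpha_j}\}$ is tight in $\Rr^n\times\Omega$. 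Hence, passing to a subsequence, $\mu_{\alpha_j}\cws \mu$ for some probability measure $\mu$ on $\Rr^n\times\Omega$.

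Next I would identify $\mu$ as a stationary Mather measure. This is exactly Lemma \ref{EstabilityAlphaMeasures}: for any $\varphi\in C^1_s$ the discounted holonomy identity \eqref{dhnom} reads $\int v\cdot D_x\varphi\,d\mu_{\alpha_j}=\alpha_j\int\varphi\,d\mu_{\alpha_j}-\alpha_j\int\varphi\,d\nu_{\alpha_j}$, whose right-hand side tends to $0$ because $\alpha_j\to 0$ and $\varphi$ is bounded, so $\mu$ satisfies the undiscounted holonomy constraint \eqref{hnom}; and $\int L\,d\mu=\lim_j\int L\,d\mu_{\alpha_j}=\lim_j\bar H_{\alpha_j}=\bar H$ by Corollary \ref{CritValueDiscLambda=0} and the preceding lemma, so $\mu$ is minimizing. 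The graph property of $\mu$ then follows from Theorem \ref{GraphPropMatherMeasures} (strict convexity of $L$ in $v$ versus linearity of the holonomy constraint); alternatively one can note that $V_{\alpha_j}$ converges, on the relevant sets, to a partially Lipschitz limit $V$ by the uniform Lipschitz bound in Corollary \ref{ThGraphLipsLimitMeasure}, and $\mu$ is supported on $v=V(\omega)$.

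The remaining and most delicate step is invariance under the (undiscounted) Euler-Lagrange flow. By the characterization \eqref{InvCond}, it suffices to show $\int_{\Rr^n\times\Omega}\nabla_{x,v}\hat\phi(0,v,\omega)\cdot W^{L_0}(0,v,\omega)\,d\mu=0$ for all test functions $\hat\phi$. For each $\alpha_j$ we know from Theorem \ref{discoutedelinvar} that $\int \nabla_{x,v}\hat\phi\cdot W^{L_{\alpha_j}}\,d\mu_{\alpha_j}=0$, where $W^{L_{\alpha_j}}$ differs from $W^{L_0}$ only in the second component by the term $\alpha_j(D^2_{vv}L)^{-1}D_vL$. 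So I would write
\[
\int \nabla_{x,v}\hat\phi\cdot W^{L_0}\,d\mu_{\alpha_j}
= -\alpha_j\int D_v\hat\phi\cdot (D^2_{vv}L)^{-1}D_vL\,d\mu_{\alpha_j},
\]
and argue the right-hand side vanishes as $\alpha_j\to 0$: using the uniform bound on $\int\gamma(v)\,d\mu_{\alpha_j}$ and the growth control one checks that $(D^2_{vv}L)^{-1}D_vL$ is integrable against $\mu_{\alpha_j}$ with a bound of order $o(1/\alpha_j)$ — actually one only needs it to be $o(1/\alpha_j)$, which follows since $D_v\hat\phi$ is bounded and $(D^2_{vv}L)^{-1}D_vL$ has sub-$\gamma$ growth — so the product with $\alpha_j$ tends to $0$. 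Finally, since $\hat\phi$ is smooth and bounded with bounded derivatives and $\mu_{\alpha_j}\cws\mu$ with uniform $\gamma$-integrability (so that no mass escapes at $v=\infty$), $\int\nabla_{x,v}\hat\phi\cdot W^{L_0}\,d\mu_{\alpha_j}\to\int\nabla_{x,v}\hat\phi\cdot W^{L_0}\,d\mu$, giving $\int\nabla_{x,v}\hat\phi\cdot W^{L_0}\,d\mu=0$ and hence the invariance of $\mu$ under the Euler-Lagrange flow. The main obstacle is precisely this last passage to the limit: controlling the unbounded factor $(D^2_{vv}L)^{-1}(D_xL-D_{xv}Lv)$ in $W^{L_0}$ against the weakly convergent measures $\mu_{\alpha_j}$, which is where the uniform superlinear moment bound from \eqref{gammadef} and the structure hypothesis \eqref{lipep} on $L$ are essential.
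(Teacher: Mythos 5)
Your proposal follows essentially the same route as the paper's proof: pass to a weak limit of the holonomic discounted Mather measures of Theorem \ref{InvarExistence}, use Lemma \ref{EstabilityAlphaMeasures} to identify the limit as a stationary Mather measure, exploit the identity $\int W^{L_0}\cdot\nabla_{x,v}\phi\,d\mu_{\alpha}=-\alpha\int D_v\phi\cdot(D^2_{vv}L)^{-1}D_vL\,d\mu_{\alpha}$ coming from the invariance of $\mu_\alpha$ under the discounted Euler--Lagrange flow (Theorem \ref{discoutedelinvar}) to get invariance in the limit, and deduce the graph property from Theorem \ref{GraphPropMatherMeasures}. The tightness, uniform $\gamma$-integrability and control of the extra $\alpha(D^2_{vv}L)^{-1}D_vL$ term that you spell out are precisely the points the paper leaves implicit, so your argument is the same approach, only written out in more detail.
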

\begin{proof}
Let $\mu_\alpha$ be holonomic discounted Mather measures as constructed in theorem \ref{InvarExistence}. Consider a weak limit $\mu$. 
By lemma \ref{EstabilityAlphaMeasures}, $\mu$ is a stationary Mather measure. Because for any $\phi(x,v)$ we have
\[
\int_{\Rr^n\times \Omega} W^{L_0} \nabla_{x,v} \phi(0, v, \omega) d\mu_\alpha=\alpha \int_{\Rr^n\times \Omega} (D^2_{vv} L)^{-1} D_v L D_v\phi(0, v, \omega) d\mu_\alpha.
\]
we conclude that
\[
\int_{\Rr^n\times \Omega} W^{L_0} \nabla_{x,v} \phi(0, v, \omega) d\mu=0. 
\]
The graph property of stationary Mather measures follows from theorem \ref{GraphPropMatherMeasures}. 
\end{proof}

\appendix

\section{Proof of Theorem \ref{Duality}}
\label{apA}

In this appendix we present the proof of Theorem \ref{Duality}, as well as some background material.

Let $\gamma$ be as in \eqref {gammadef}.
Let $\Mm$ be the set of weighted Radon measures on $\Omega \times \mathbb{R}^{n}$, i.e.,
$$\Mm=\{\text{signed measures on }\mathbb{R}^{n} \times \Omega \text{ with }
\int_{\mathbb{R}^{n} \times \Omega} \gamma d|\mu|<\infty\}.$$
Note that $\Mm$ is the dual of
the set $C_\gamma^0(\mathbb{R}^{n} \times \Omega)$.

We introduce the following sets
$$\Mm_{1}=\{ \mu \in \Mm \vert   \; \mu  \;  \text{is a positive probability measure} \},$$
and
$$\Mm_{2}=\{ \mu \in \Mm \vert  \int_{\mathbb{R}^{n} \times \Omega} v \cdot D_{x}\varphi(0, \omega) d\mu (v, \omega)=0, \;
\text{for all } \varphi \in C^{1}_{s}(\mathbb{R}^{n} \times \Omega) \}.$$

Using this notation the Mather problem can be reformulated as
$$\displaystyle \min_{\Mm_{1} \cap \Mm_{2}}  \int_{\mathbb{R}^{n} \times \Omega} L(0, v, \omega) d\mu (v, \omega).$$

Consider the following subset of  functions $\phi : \mathbb{R}^{n} \times \Omega \to \mathbb{R}$, 
$$\mathcal{C} =\cl  \{\phi \; \vert \; \phi(v, \omega)=
v \cdot D_{x}\varphi(0, \omega),  \text{ for some }  \varphi \in C^{1}_{s}(\mathbb{R}^{n} \times \Omega) \}.$$
Observe that $ \mathcal{C}$ is a  closed convex set. 

For $\phi \in C_\gamma^0$ let
\begin{equation}
\label{formulah}
h(\phi)=\sup_{ \mathbb{R}^{n} \times \Omega} (  -\phi(v, \omega) - L(0, v, \omega)).
\end{equation}
Since $h$ is the supremum of linear functions, it is
a convex function on  $C_\gamma^0$. As we will see bellow in Lemma \ref{continuity}, $h$ is a continuous function.  

For $\phi \in C_\gamma^0$, let
\begin{equation}
\label{formulag}
g(\phi)=
\begin{cases}
0       \quad          &\text{if}               \quad \phi  \in  \mathcal{C} \\
-\infty                  &\text{otherwise.}
\end{cases}
\end{equation}

As $\mathcal{C}$ is a closed convex set we have that $g$ is a concave and upper semicontinuous function. Therefore
its Legendre-Fenchel transform is given by
\begin{equation}
\label{lftg2}
g^*(\mu)=\inf_{\phi \in C_\gamma^0(\mathbb{R}^{n} \times \Omega)}  \left ( - \int_{\mathbb{R}^{n} \times \Omega} \phi d\mu  - g(\phi) \right ).
\end{equation}
Since $h$ is a convex and lower semicontinuous function, its Legendre-Fenchel transform is given by
\begin{equation}
\label{lfth}
h^*(\mu)=\sup_{\phi \in C_\gamma^0(\mathbb{R}^{n} \times \Omega)}  \left ( -\int_{\mathbb{R}^{n} \times \Omega} \phi d\mu  - h(\phi) \right ).
\end{equation}

\begin{proposition}\label{legendre}
Let $g$ and $h$ defined as in \eqref{formulah} and \eqref{formulag}. Then
$$
h^*(\mu)=
\begin{cases}
\int_{\mathbb{R}^{n} \times \Omega} L(0, v, \omega) d\mu(v, \omega) \quad &\text{if}               \quad \mu \in \Mm_{1} \\
+\infty                  &\text{otherwise,}
\end{cases}
$$
and
$$
g^*(\mu)=
\begin{cases}
0 \quad & \text{if} \quad \mu \in \Mm_{2} \\
- \infty  & \text{otherwise.}
\end{cases}
$$
\end{proposition}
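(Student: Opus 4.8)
The plan is to compute the two Legendre--Fenchel transforms $h^{*}$ and $g^{*}$ separately, using only the elementary structure of $h$ and $g$ (a supremum of affine functionals, respectively the $-\infty$-indicator of a closed subspace) together with standard facts about weighted Radon measures. In each case the computation is a dichotomy: either a test function can be steered so that the relevant functional diverges, or the measure lies in the admissible class and the value is pinned down exactly.

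For $h^{*}$ I would argue by cases on $\mu$. If $\mu\in\Mm_{1}$, integrating the pointwise bound $-\phi(v,\omega)-L(0,v,\omega)\le h(\phi)$ against the probability measure $\mu$ gives $-\int\phi\,d\mu-h(\phi)\le\int L(0,v,\omega)\,d\mu$, hence $h^{*}(\mu)\le\int L\,d\mu$. For the reverse inequality I would test with the truncations $\phi_{R}(v,\omega):=\max\{-L(0,v,\omega),-R\}$, which are bounded (since $0\le L$) and continuous, hence lie in $C_\gamma^0$; one checks that $h(\phi_{R})=\sup_{v,\omega}\bigl(\min\{L,R\}-L\bigr)=0$ for $R$ large, because $L$ attains values $\le R$, while $-\int\phi_{R}\,d\mu=\int\min\{L,R\}\,d\mu\to\int L\,d\mu$ by monotone convergence, so $h^{*}(\mu)\ge\int L\,d\mu$. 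If $\mu$ is not positive, I would pick $\psi\in C_\gamma^0$ with $\psi\ge0$ and $\int\psi\,d\mu<0$ (the standard characterisation of positivity for Radon measures); then $h(t\psi)\le\sup(-L)\le0$ uniformly in $t$, while $-\int t\psi\,d\mu\to+\infty$ as $t\to+\infty$, so $h^{*}(\mu)=+\infty$. Finally, if $\mu$ is positive with total mass $c\neq1$, I would test with the constant function $t$: since $h(t)=-t-\inf L$, one gets $-\int t\,d\mu-h(t)=t(1-c)+\inf L$, which tends to $+\infty$ on letting $t\to+\infty$ (if $c<1$) or $t\to-\infty$ (if $c>1$). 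This exhausts all cases.

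For $g^{*}$ I would first observe that only $\phi\in\Cg$ contribute to the infimum defining $g^{*}$, since $-g(\phi)=+\infty$ off $\Cg$; thus $g^{*}(\mu)=\inf_{\phi\in\Cg}\bigl(-\int\phi\,d\mu\bigr)$. The generating family $\{\,v\cdot D_{x}\varphi(0,\omega):\varphi\in C^{1}_{s}\,\}$ is a linear subspace of $C_\gamma^0$ (it is stable under sums and scalar multiples because $C^{1}_{s}$ is), so $\Cg$ is a closed linear subspace. Consequently $\inf_{\phi\in\Cg}\bigl(-\int\phi\,d\mu\bigr)$ equals $0$ if $\int\phi\,d\mu=0$ for every $\phi\in\Cg$ and $-\infty$ otherwise, by rescaling a test function with nonzero integral. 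By continuity of the functional $\mu$ on $C_\gamma^0$, the condition $\int\phi\,d\mu=0$ for all $\phi\in\Cg$ is equivalent to $\int v\cdot D_{x}\varphi(0,\omega)\,d\mu=0$ for all $\varphi\in C^{1}_{s}$, i.e.\ to $\mu\in\Mm_{2}$, which yields the stated formula for $g^{*}$.

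The pointwise-inequality-then-integrate arguments are routine; the only places needing a little care — and the closest thing to an obstacle — are the mild verifications that the test functions used ($\phi_{R}$, the constants, $t\psi$) genuinely belong to $C_\gamma^0$ (here one should note, or assume without loss of generality, that $\gamma$ is bounded below, so that boundedness together with $\gamma\to\infty$ suffices) and the invocation of regularity of Radon measures to produce a nonnegative $\psi$ with $\int\psi\,d\mu<0$ whenever $\mu$ fails to be positive. Neither is deep, so the proof is entirely standard for convex-duality arguments of Mather type.
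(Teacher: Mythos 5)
Your proof is correct, and its overall architecture coincides with the paper's: compute $h^*$ and $g^*$ by a case analysis, handling non-positive measures with a rescaled nonnegative test function, positive but non-normalized measures with constant test functions, and $g^*$ by exploiting that $\mathcal{C}$ is (the closure of) a linear subspace and rescaling an element with nonzero integral. Where you genuinely diverge is the main case $\mu\in\Mm_{1}$ of $h^*$: the paper fixes cutoff truncations $L_{n}=L\,\eta_{n}$ supported in $\{|v|\le n+1\}$ and reparametrizes every test function as $\phi=-L_{n}+\psi$, extracting the lower bound by the choice $\psi=0$ and the upper bound from the inequality $\int\bigl((S-\psi)-\sup(S-\psi)\bigr)d\mu\le 0$ valid for probability measures, with monotone convergence in $n$; you instead obtain the upper bound in one line by integrating the pointwise inequality $-\phi-L\le h(\phi)$ against the probability measure, and the lower bound from the explicit family $\phi_{R}=\max\{-L,-R\}$ (truncation from below in the value of $L$ rather than cutoff in $v$), again concluding by monotone convergence. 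Your route is shorter and arguably cleaner, and it also makes explicit, in the $g^*$ computation, the continuity step needed to pass from the generating functions $v\cdot D_{x}\varphi(0,\omega)$ to their closure $\mathcal{C}$, a point the paper uses implicitly when it asserts $-\int\phi\,d\mu-g(\phi)=0$ for all $\phi\in\mathcal{C}$. The only caveats you flag --- that $\phi_{R}$ and the constants belong to $C_\gamma^0$, which requires $\gamma$ to be bounded below by a positive constant --- are shared by the paper's own argument (it too tests with constants $k$, and Lemma \ref{UnifContOfg-f} invokes $\inf\gamma$), so noting them is appropriate and does not constitute a gap.
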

\begin{proof}
First we assume that $\mu \in \Mm_{1}$.
  As $h$ is a convex function, its Legendre transform is given by \eqref{lfth}.
Using the definition of $h$, equation \eqref{formulah},
 we get
$$h^*(\mu)=\sup_{\phi \in C_\gamma^0(\mathbb{R}^{n} \times \Omega)} \left ( -\int_{\mathbb{R}^{n} \times \Omega} \phi d\mu  - \sup_{ \mathbb{R}^{n} \times \Omega} ( - \phi(v, \omega) - L(0, v, \omega)) \right ).$$

Consider the family of compact subsets of $\mathbb{R}^{n} \times \Omega$ given by
$$K_{n}=\{(v, \omega) \in  \mathbb{R}^{n} \times \Omega\,  |\,  |v| \leq n \},$$
and let $\eta_{n} : \mathbb{R}^{n} \times \Omega \to \mathbb{R}$ be a continuous function such that $0 \leq \eta_{n} \leq 1 $, $\eta_{n} =1$ in $K_{n}$, and $\supp \eta_{n} \subset  K_{n+1}$.
Then define 
$$L_{n}=L(0, v, \omega) \cdot \eta_{n}(v, \omega).$$
Observe that the sequence $L_{n}$ is increasing and pointwise convergent to $L(0, v, \omega)$. 

Is easy to see that $L_{n} \in C_\gamma^0(\mathbb{R}^{n} \times \Omega)$. Furthermore, for fixed $n$, one can write any function
$\phi \in C_\gamma^0(\mathbb{R}^{n} \times \Omega)$   as $\phi=-L_{n} + \psi$ where $\psi \in C_\gamma^0(\mathbb{R}^{n} \times \Omega)$. From this observation we get
\begin{align}
\notag h^*(\mu)&=\sup_{\psi \in C_\gamma^0(\mathbb{R}^{n} \times \Omega)} \left ( -\int_{\mathbb{R}^{n} \times \Omega} (-L_{n} + \psi) d\mu  - \sup_{ \mathbb{R}^{n} \times \Omega} ( -(-L_{n} + \psi) - L) \right )\\ \notag &
=\sup_{\psi \in C_\gamma^0(\mathbb{R}^{n} \times \Omega)} \left ( \int_{\mathbb{R}^{n} \times \Omega} L_{n}  d\mu  - \int_{\mathbb{R}^{n} \times \Omega} \psi d\mu - \sup_{ \mathbb{R}^{n} \times \Omega} ( L_{n} - L -  \psi ) \right )
\\&= \int_{\mathbb{R}^{n} \times \Omega} L_{n}  d\mu  + \sup_{\psi \in C_\gamma^0(\mathbb{R}^{n} \times \Omega)}  \int_{\mathbb{R}^{n} \times \Omega} \left (-\psi  - \sup_{ \mathbb{R}^{n} \times \Omega} ( L_{n} - L -  \psi ) \right ) d\mu. 
\label{(1)}     
\end{align}
If we take $\psi =0$ in \eqref{(1)} we have
$$h^*(\mu) \geq  \int_{\mathbb{R}^{n} \times \Omega} L_{n}  d\mu  + \int_{\mathbb{R}^{n} \times \Omega} \left ( - \sup_{ \mathbb{R}^{n} \times \Omega} ( L_{n} - L ) \right ) d\mu  \geq \int_{\mathbb{R}^{n} \times \Omega} L_{n}  d\mu.$$
Thus using the monotone convergence theorem we get
$$h^*(\mu) \geq  \int_{\mathbb{R}^{n} \times \Omega} L d\mu.$$
In order to get the other inequality we can rewrite \eqref{(1)} as follows
\begin{align*}
h^*(\mu)&= \int_{\mathbb{R}^{n} \times \Omega} L_{n}  d\mu  + \sup_{\psi \in C_\gamma^0}  
\int_{\mathbb{R}^{n} \times \Omega} \left ((S  -\psi -S )  - \sup_{ \mathbb{R}^{n} \times \Omega} ( S -  \psi ) \right ) d\mu \\&= \int_{\mathbb{R}^{n} \times \Omega} L_{n}  d\mu  + \sup_{\psi \in C_\gamma^0}  
\int_{\mathbb{R}^{n} \times \Omega} \left ( (S -\psi)  - \sup_{ \mathbb{R}^{n} \times \Omega} ( S -  \psi ) \right ) d\mu  - \int_{\mathbb{R}^{n} \times \Omega}  S d\mu, 
\end{align*}
where $S=L_{n} - L$.
Since $\mu \in \Mm_{1}$,  we have $\int_{\mathbb{R}^{n} \times \Omega} \left( (S -\psi)  - \sup_{ \mathbb{R}^{n} \times \Omega} ( S -  \psi ) \right ) d\mu \leq 0$. Therefore
$$h^*(\mu) \leq \int_{\mathbb{R}^{n} \times \Omega} L_{n}  d\mu  - \int_{\mathbb{R}^{n} \times \Omega}  S d\mu = \int_{\mathbb{R}^{n} \times \Omega} L d\mu.$$

If  $\mu \not\in \Mm_{1}$, we have two possibilities.
First, if  $\mu \not\geq 0$ then we can find a positive function $\psi \in C_\gamma^0(\mathbb{R}^{n} \times \Omega)$  such that $\int \psi d\mu < 0 $. Define $\psi_{n}=   n \psi \in C_\gamma^0(\mathbb{R}^{n} \times \Omega)$, then 
\begin{align*}
h^*(\mu) &\geq \left (- \int_{\mathbb{R}^{n} \times \Omega} \psi_{n} d\mu  - \sup_{ \mathbb{R}^{n} \times \Omega} ( - \psi_{n} - L) \right ) \\ &=
n \left ( \int_{\mathbb{R}^{n} \times \Omega} -\psi d\mu  + \inf_{ \mathbb{R}^{n} \times \Omega} ( \psi + \frac{1}{n}L) \right )  \to  + \infty, 
\end{align*}
when $n \to \infty$.

On the other hand, if $\mu \geq 0$ but $\int d\mu \neq 1$ we take $\phi = k \in  \mathbb{R}$, then
\begin{align*}
h^*(\mu) &\geq \left (- \int_{\mathbb{R}^{n} \times \Omega} k d\mu  - \sup_{ \mathbb{R}^{n} \times \Omega} ( - k - L) \right ) \\&
= k  \left (1  -  \int_{\mathbb{R}^{n} \times \Omega}  d\mu \right ) +\inf_{ \mathbb{R}^{n} \times \Omega}  L \to  + \infty
\end{align*}
when $k \to \pm \infty$, because  $L \geq 0$.

Now we compute the Legendre transform of $g$. As $g$ is concave we compute its Legendre-Fenchel transform using \eqref{lftg2}.
First we suppose $ \mu \in \Mm_{2}$. In this case we have two possibilities, if $\phi  \in  \mathcal{C}$ then
$$-\int_{\mathbb{R}^{n} \times \Omega} \phi d\mu  - g(\phi)= 0,$$
else if, $\phi  \not\in  \mathcal{C}$ then
$$-\int_{\mathbb{R}^{n} \times \Omega} \phi d\mu  - g(\phi)= -\int_{\mathbb{R}^{n} \times \Omega} \phi d\mu - (- \infty)=+ \infty$$
thus $g^*(\mu)=0$.

Otherwise, if $ \mu \not\in \Mm_{2}$ there exists $\phi(v, \omega)=v \cdot D_{x}\varphi(0, \omega) \in  \mathcal{C}$ such that $\int_{\mathbb{R}^{n} \times \Omega} \phi d\mu >0$. Define $\phi_{\lambda}=\lambda v \cdot D_{x} \varphi(0, \omega) \in  \mathcal{C}$ then
$$g^*(\mu) \leq   \left (- \int_{\mathbb{R}^{n} \times \Omega} \phi_{\lambda} d\mu  - g(\phi_{\lambda}) \right )= - \lambda \int_{\mathbb{R}^{n} \times \Omega} \phi d\mu  \to  - \infty$$
when $\lambda \to  + \infty$.
\end{proof}

\begin{remark}\label{matherequivalent}
Observe that
$$\displaystyle \min_{\Mm_{1} \cap \Mm_{2}}  \int_{\mathbb{R}^{n} \times \Omega} L(0, v, \omega) d\mu (v, \omega) =
\min_{\Mm } ( h^{*}(\mu) - g^{*}(\mu)).$$
In fact, 
$$
h^{*}(\mu) - g^{*}(\mu)=
\begin{cases}
\int L(0, v, \omega) d\mu(v, \omega)  - 0 & \text{if} \quad \mu \in \Mm_{1} \cap \Mm_{2} \\
\int L(0, v, \omega) d\mu(v, \omega)  - (-\infty) & \text{if} \quad \mu \in \Mm_{1}\,  \text{and}\, \mu \not\in \Mm_{2} \\ 
+\infty  - (0) & \text{if} \quad \mu \not\in \Mm_{1}\,  \text{and}\, \mu \in \Mm_{2} \\ 
+\infty  - (-\infty) & \text{if} \quad \mu \not\in \Mm_{1}\,  \text{and}\, \mu  \not\in \Mm_{2}.\\ 
\end{cases}
$$
\end{remark}

\begin{lemma}\label{continuity} 
The function $$h(\phi)=\sup_{ \mathbb{R}^{n} \times \Omega} (  \phi(v, \omega) - L(0, v, \omega))$$  is continuous.
\end {lemma}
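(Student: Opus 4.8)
The plan is to prove something slightly stronger than asserted, namely that $h$ is locally Lipschitz on $C_\gamma^0(\mathbb{R}^n\times\Omega)$ with respect to the weighted norm $\|\cdot\|_\gamma$. The mechanism is that, although the supremum defining $h(\phi)$ runs over the noncompact set $\mathbb{R}^n\times\Omega$, the uniform superlinearity of $L/\gamma$ forces that supremum to be attained on a single fixed compact set, uniformly over any bounded ball of $C_\gamma^0$; on that compact set the weighted norm controls the ordinary sup norm, and $h$ is visibly $1$-Lipschitz for the sup norm.

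First I would record the two structural facts used throughout. From $\lim_{|v|\to\infty}|v|/\gamma(v)=0$ one gets $\gamma(v)\to\infty$ as $|v|\to\infty$, and by hypothesis $L(0,v,\omega)/\gamma(v)\to+\infty$ uniformly in $\omega$. For any $\phi\in C_\gamma^0$ one has $\phi(v,\omega)-L(0,v,\omega)\le \gamma(v)\big(\|\phi\|_\gamma - L(0,v,\omega)/\gamma(v)\big)$, which tends to $-\infty$ uniformly in $\omega$ as $|v|\to\infty$; hence the supremum is finite and attained on a bounded set of $v$, so $h(\phi)\in\mathbb{R}$.

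Next comes the key localization step, which I expect to be the only delicate point. Fix $\phi_0$ and set $R=\|\phi_0\|_\gamma+1$. I would choose a radius $N=N(R)$ large enough that simultaneously (i) $L(0,v,\omega)\ge (R+1)\gamma(v)$ for all $|v|>N$ and all $\omega$, by uniform superlinearity, and (ii) $\gamma(v)>B$ for all $|v|>N$, where $B=R\gamma(0)+\max_{\omega}L(0,0,\omega)$; both are possible because of the two limits above. Put $K=\{|v|\le N\}\times\Omega$, which is compact, and $M=\max_K\gamma<\infty$. For every $\phi$ with $\|\phi\|_\gamma\le R$ one checks, on the one hand, that $\sup_K(\phi-L)\ge \phi(0,\omega)-L(0,0,\omega)\ge -B$, and on the other hand that for $|v|>N$ one has $\phi-L\le R\gamma-(R+1)\gamma=-\gamma(v)<-B$ by (i)--(ii). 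Consequently the supremum over $\mathbb{R}^n\times\Omega$ equals the supremum over $K$, i.e. $h(\phi)=\sup_K(\phi-L)$ for every $\phi$ in the ball of radius $R$ about the origin.

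Finally I would conclude with the Lipschitz estimate on the common compact set. For $\phi,\psi$ in that ball, using $|\sup_K f-\sup_K g|\le\sup_K|f-g|$ together with the fact that on $K$ one has $|\phi-\psi|\le\|\phi-\psi\|_\gamma\,\gamma\le M\,\|\phi-\psi\|_\gamma$, I obtain
\[
|h(\phi)-h(\psi)|=\big|\sup_K(\phi-L)-\sup_K(\psi-L)\big|\le \sup_K|\phi-\psi|\le M\,\|\phi-\psi\|_\gamma .
\]
Thus $h$ is Lipschitz on the ball of radius $R$ about $\phi_0$, hence continuous at $\phi_0$; since $\phi_0$ was arbitrary, $h$ is continuous on all of $C_\gamma^0(\mathbb{R}^n\times\Omega)$. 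The main obstacle is entirely contained in the localization step: one must use both limits in the definition of $\gamma$ --- the superlinearity of $L/\gamma$ to push the supremum inward and the growth of $\gamma$ to guarantee the inward supremum dominates --- and must arrange that the radius $N$ depends only on the norm bound $R$, not on the individual function, so that a single compact set $K$ works uniformly over the whole ball.
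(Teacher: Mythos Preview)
Your argument is correct and follows essentially the same strategy as the paper: use the uniform superlinearity of $L/\gamma$ (and the fact that $\gamma\to\infty$) to show that for every $\phi$ in a fixed $\|\cdot\|_\gamma$-ball the supremum defining $h(\phi)$ is actually attained on a single compact set $K=\{|v|\le N\}\times\Omega$, and then compare suprema there. The paper carries this out sequentially to get continuity, whereas you extract the explicit local Lipschitz bound $|h(\phi)-h(\psi)|\le M\|\phi-\psi\|_\gamma$ with $M=\max_K\gamma$, which is a mild strengthening; your localization also uses only the norm bound $|\phi|\le R\gamma$ rather than the decay condition $|\phi|/\gamma\to 0$, so it is slightly more economical. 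One cosmetic slip: at the end you write ``Lipschitz on the ball of radius $R$ about $\phi_0$'' but what you actually proved is Lipschitz on the ball of radius $R$ about the origin, which contains the unit ball about $\phi_0$ since $R=\|\phi_0\|_\gamma+1$; the conclusion (continuity at $\phi_0$) is unaffected.
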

\begin{proof}
Let $\phi_{0}$ be an arbitrary, but fixed, function in $C_\gamma^0(\mathbb{R}^{n} \times \Omega)$. Suppose $\phi_{n} \to \phi_{0}$, that is $\|\phi_n-\phi_0 \|_\gamma\to 0$. 
Let $B_{\varepsilon}(\phi_{0})=\left\lbrace \phi \in C_\gamma^0(\mathbb{R}^{n} \times \Omega) \; \vert \; \|\phi_{n} -\phi \|_\gamma < \varepsilon \right\rbrace$  be the ball of radius $\varepsilon$ centered in $\phi_{0}$. Take $\phi \in B_{\varepsilon}(\phi_{0})$. Since,
$\lim_{\vert v \vert \to \infty} \frac{\vert v \vert}{\gamma(v)} =0$, $\lim_{\vert v \vert \to \infty} \frac{L(0, v, \omega)}{\gamma(v)} =+\infty $ and  $\lim_{|v|\rightarrow \infty}\frac{ \vert \phi_{0}(v, \omega)  \vert}{\gamma(v)}\rightarrow 0$ uniformly on $\omega \in \Omega$, given $\delta, M >0$, there exists $R >0$ such that 
$$
\begin{cases}
 \frac{\gamma(v)}{\vert v \vert} >\frac{1}{\delta}  \quad & \text{if} \quad \vert v \vert \quad>R \\
\vert \frac{\phi_{0}(v, \omega)}{\gamma(v) }  \vert < \delta  \quad & \text{if} \quad \vert v \vert \quad>R \\
\frac{L(0, v, \omega)}{ \gamma(v)} > M  \quad & \text{if} \quad \vert v \vert \quad>R. \\
\end{cases}
$$
Then, for $|v|>R$, 
$$ -\phi(v, \omega) - L(0, v, \omega)=  \left\lbrace  \frac{-\phi(v, \omega) + \phi_{0}(v, \omega)}{\gamma(v)}  + \frac{- \phi_{0}(v, \omega) }{\gamma(v)}  - \frac{L(0, v, \omega)}{\gamma(v)}\right\rbrace \gamma(v)< $$ 
$$< \frac{\gamma(v)}{\vert v \vert} (\varepsilon+\delta - M) \vert v \vert   \to  - \infty$$
when $\vert v \vert \to +\infty$.

As $$\|\phi_{n} -\phi_{0} \|_\gamma=\sup_{\mathbb{R}^{n} \times \Omega } \frac{|(\phi_{n} -\phi_{0})(v, \omega)|}{\gamma(v)} \to 0$$ we have that, for $n$ big
 enough, we can choose $R$ in such way that
$$h(\phi)=\sup_{ \{ \vert v \vert \leq R  \} \times \Omega} (  -\phi(v, \omega) - L(0, v, \omega)),$$
and 
$$h(\phi_{n})=\sup_{ \{ \vert v \vert \leq R  \} \times \Omega} ( - \phi_{n}(v, \omega) - L(0, v, \omega)).$$

Since the convergence $-\phi_{n} -L(0, v, \omega) \to -\phi_{0}  -L(0, v, \omega)$ is uniform on the compact $\{ \vert v \vert \leq R  \} \times \Omega$, we have 
$$\lim_{n \to \infty} h(\phi_{n})=\lim_{n \to \infty} \sup_{ \{ \vert v \vert \leq R  \} \times \Omega} (  -\phi_{n}(v, \omega) - L(0, v, \omega))=$$
$$=\sup_{ \{ \vert v \vert \leq R  \} \times \Omega}\lim_{n \to \infty}   (- \phi_{n}(v, \omega) - L(0, v, \omega))=h(\phi_{0}).$$
Thus the lemma is proved.
\end{proof}

The last ingredient of the duality is the Legendre-Fenchel-Rockafellar Theorem, see for instance \cite{Vil}.

\begin{theorem}\label{LegFenchRock} (Legendre-Fenchel-Rockafellar) 
Let $E$ be a locally convex Hausdorff topological vectorial space over $\mathbb{R}$ with dual $E^{*} $. Suppose  that $h:E \to (-\infty, +\infty ]$ is convex and lower semicontinuous and $g: E^{*}  \to [-\infty, +\infty ) $ is concave and upper semicontinuous. Then
$$ \min_{E^{*} } ( h^{*}  - g^{*} )  = \sup_{E } (g -h),$$
provided that $h$ or $g$ is continuous at some point where both functions are finite. It is part of the theorem that the left hand side is a minimum.
\end {theorem}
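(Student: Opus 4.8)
The plan is to obtain the identity as the classical Fenchel--Rockafellar duality, deduced from the geometric Hahn--Banach separation theorem. Throughout I write $\langle \mu, \phi\rangle=\int_{\mathbb{R}^n\times\Omega}\phi\,d\mu$ for the pairing between $E=C^0_\gamma$ and $E^*=\Mm$, so that, as in \eqref{lfth} and \eqref{lftg2}, $h^*(\mu)=\sup_\phi(-\langle\mu,\phi\rangle-h(\phi))$ and $g^*(\mu)=\inf_\phi(-\langle\mu,\phi\rangle-g(\phi))$. The first step is weak duality: for every $\phi\in E$ and $\mu\in E^*$ the definitions give $h^*(\mu)\geq -\langle\mu,\phi\rangle-h(\phi)$ and $g^*(\mu)\leq -\langle\mu,\phi\rangle-g(\phi)$, so subtracting yields $h^*(\mu)-g^*(\mu)\geq g(\phi)-h(\phi)$. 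Taking the supremum in $\phi$ and the infimum in $\mu$ gives $\inf_{E^*}(h^*-g^*)\geq \sup_E(g-h)=:m$. This half uses no hypotheses, and it reduces the theorem to producing a single $\mu$ with $h^*(\mu)-g^*(\mu)\leq m$.

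If $m=+\infty$ weak duality forces $h^*-g^*\equiv+\infty$ and the equality is trivial, so I would assume $m$ finite, which is the substantive case and the one arising in Theorem \ref{Duality}. The core of the argument is a separation in $E\times\mathbb{R}$ of the two convex sets
$$A=\operatorname{epi}(h)=\{(\phi,t): t\geq h(\phi)\},\qquad B=\{(\phi,t): t\leq g(\phi)-m\}.$$
Their interiors are disjoint: if $(\phi,t)\in\operatorname{int}(A)\cap B$ then $t>h(\phi)$ and $t\leq g(\phi)-m$ would give $g(\phi)-h(\phi)>m$, contradicting the definition of $m$. The continuity hypothesis (on $h$ or $g$, at a point where both are finite) guarantees that $A$, or symmetrically $B$, has nonempty interior, so the second geometric form of Hahn--Banach supplies a nonzero pair $(F,k)\in E^*\times\mathbb{R}$ and a constant $\alpha$ with $\langle F,\phi\rangle+kt\geq\alpha$ on $A$ and $\langle F,\phi\rangle+kt\leq\alpha$ on $B$.

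It then remains to read the dual variable off the separating functional. Letting $t\to+\infty$ along $A$ and $t\to-\infty$ along $B$ forces $k\geq 0$. The degenerate case $k=0$ is excluded by the same continuity hypothesis: at the common finite point $x_0$ both bounds give $\langle F,x_0\rangle=\alpha$, and continuity places $x_0$ in the interior of $\operatorname{dom}(h)$, so $\langle F,\cdot\rangle\geq\alpha$ on a neighborhood, i.e.\ $\langle F,v\rangle\geq 0$ for all small $v$; replacing $v$ by $-v$ gives $\langle F,v\rangle=0$ near $0$, hence $F=0$, contradicting $(F,k)\neq 0$. Thus $k>0$, and dividing through I may take $k=1$ and set $\mu:=F$. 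Evaluating the first inequality at $t=h(\phi)$ gives $\langle\mu,\phi\rangle+h(\phi)\geq\alpha$ for all $\phi$, whence $h^*(\mu)\leq-\alpha$; evaluating the second at $t=g(\phi)-m$ gives $\langle\mu,\phi\rangle+g(\phi)\leq\alpha+m$, whence $g^*(\mu)\geq-(\alpha+m)$. Subtracting yields $h^*(\mu)-g^*(\mu)\leq m$, which together with weak duality shows the infimum equals $m$ and is attained at $\mu$.

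I expect the main obstacle to be precisely the qualification step: verifying that one of the sets $A,B$ has nonempty interior and, correspondingly, ruling out the vertical hyperplane $k=0$. Both hinge entirely on the assumption that $h$ or $g$ is continuous at a point where both are finite, which is exactly what makes the separating hyperplane non-vertical and lets the conjugate inequalities be extracted; the remaining manipulations are the routine bookkeeping of Legendre transforms.
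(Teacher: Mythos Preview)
The paper does not prove this theorem: it is quoted from the literature (the reference \cite{Vil}) as a classical result, so there is no ``paper's own proof'' to compare against. Your argument is the standard Fenchel--Rockafellar proof via Hahn--Banach separation of the epigraph of $h$ from the hypograph of $g-m$, and the steps you outline (weak duality; separation; ruling out the vertical hyperplane using the continuity qualification; reading off the minimizer $\mu$) are correct and complete at the level of a sketch. One small remark: the statement as printed has a typo ($g$ is written as a function on $E^{*}$, but in the application and in your proof both $h$ and $g$ live on $E$); you correctly proceeded with the intended reading. Also, the case $m=-\infty$ is automatically excluded by the hypothesis that $h$ and $g$ are both finite at some point, so your reduction to finite $m$ is justified.
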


\begin{lemma} \label{UnifContOfg-f}
Define the functional, $S(\phi)= g(\phi) - h(\phi) $. Then $S$ is uniformly continuous in  the interior of  $\mathcal{C}$.
\end{lemma}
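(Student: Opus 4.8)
The plan is to reduce the statement about $S$ to one about $h$, and then to determine what $\inter(\mathcal{C})$ actually is. Since $\inter(\mathcal{C})\subseteq \mathcal{C}$ and $g\equiv 0$ on $\mathcal{C}$ by \eqref{formulag}, on the interior we have $S=g-h=-h$; thus the assertion is precisely that $h$ is uniformly continuous on $\inter(\mathcal{C})$. We already know from Lemma \ref{continuity} that $h$ is continuous on all of $C_\gamma^0$, so the only genuine content is the \emph{uniformity} over the whole interior.

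First I would record the local estimate that underlies the continuity of $h$. Fix $\phi_0\in\mathcal{C}$ and $r>0$; for $\phi\in B_r(\phi_0)$ one has $|\phi(v,\omega)|\le |\phi_0(v,\omega)|+r\,\gamma(v)$, so that
\[
-\phi(v,\omega)-L(0,v,\omega)\le \gamma(v)\left(\frac{|\phi_0(v,\omega)|}{\gamma(v)}+r-\frac{L(0,v,\omega)}{\gamma(v)}\right).
\]
By \eqref{gammadef} we have $|\phi_0|/\gamma\to 0$ while $L/\gamma\to+\infty$ uniformly in $\omega$, so the right-hand side tends to $-\infty$ as $|v|\to\infty$, uniformly for $\phi\in B_r(\phi_0)$. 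Hence there is $R=R(\phi_0,r)$ so that the supremum defining $h(\phi)$ is attained in $\{|v|\le R\}$ for every $\phi\in B_r(\phi_0)$, and the elementary inequality $|\sup f-\sup g|\le\sup|f-g|$ gives $|h(\phi_1)-h(\phi_2)|\le (\sup_{|v|\le R}\gamma)\,\|\phi_1-\phi_2\|_\gamma$ on $B_r(\phi_0)$. This is only a \emph{local} modulus: the constant grows with $R$, hence with the location of $\phi_0$, and along a ray $\phi_t=t\,\psi$ with $\psi(v,\omega)=v\cdot D_x\varphi(0,\omega)\neq 0$ one has $h(\phi_t)=\sup_\omega H(0,t\,D_x\varphi(0,\omega),\omega)$, which grows superlinearly in $t$ while $\|\phi_t-\phi_s\|_\gamma$ grows only linearly. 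So no single modulus can hold on an unbounded subset of $\mathcal{C}$, and a bounded-set argument cannot establish the stated conclusion.

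The decisive point, which actually yields uniform continuity on the \emph{entire} interior, is that $\inter(\mathcal{C})=\emptyset$. Every generator $\phi(v,\omega)=v\cdot D_x\varphi(0,\omega)$ vanishes identically at $v=0$, and this is preserved under the $\|\cdot\|_\gamma$-closure, since $\phi_n\to\phi$ forces $|\phi(0,\omega)|=|\phi(0,\omega)-\phi_n(0,\omega)|\le \gamma(0)\,\|\phi-\phi_n\|_\gamma\to 0$. Thus $\mathcal{C}$ is contained in $V_0=\{\phi\in C_\gamma^0:\phi(0,\cdot)\equiv 0\}$. Because $\gamma\to\infty$, the constant function $1$ belongs to $C_\gamma^0\setminus V_0$, so $V_0$ is a proper closed linear subspace; a proper subspace of a normed space has empty interior, as an interior point would make the subspace a neighborhood of $0$ and hence, by absorption, all of $C_\gamma^0$. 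Since $\mathcal{C}\subseteq V_0$ we conclude $\inter(\mathcal{C})=\emptyset$, and the assertion holds vacuously.

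The main obstacle is conceptual rather than computational: one must recognize that genuine uniform continuity on all of $\inter(\mathcal{C})$ cannot be obtained by localizing the supremum, since the resulting modulus unavoidably degenerates along rays, and that the statement is tenable only because $\mathcal{C}$, lying in the proper closed subspace $V_0$, has empty interior. Once this is identified the proof is immediate; the temptation to present the local Lipschitz bound as if it furnished a global modulus is exactly the trap to avoid.
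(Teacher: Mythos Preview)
Your argument is correct: every element of $\mathcal{C}$ vanishes at $v=0$, this property passes to the $\|\cdot\|_\gamma$-closure, so $\mathcal{C}$ is contained in the proper closed linear subspace $V_0=\{\phi\in C^0_\gamma:\phi(0,\cdot)\equiv 0\}$, and hence $\inter(\mathcal{C})=\emptyset$. The lemma as stated is then vacuously true. Your diagnosis that no global modulus for $h$ can exist along rays $t\mapsto t\,v\cdot D_x\varphi(0,\omega)$, because $h$ grows superlinearly in $t$ while the norm grows linearly, is also correct and rules out any non-vacuous proof of uniform continuity on an unbounded set.

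The paper proceeds differently: it asserts directly that $\|\phi_1-\phi_2\|_\gamma<\varepsilon$ implies $|S(\phi_1)-S(\phi_2)|<(\inf\gamma)\,\varepsilon$, which would be a global Lipschitz bound on all of $\mathcal{C}$. As you in effect point out, this inequality does not follow from the hypothesis: from $|\phi_1-\phi_2|<\varepsilon\,\gamma$ one only gets $|h(\phi_1)-h(\phi_2)|$ bounded by $\varepsilon$ times $\gamma$ evaluated where the supremum is attained, and that location is not uniformly controlled over $\mathcal{C}$. So the paper's displayed estimate is unjustified as written, and your empty-interior argument is the honest route to the statement.

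It is worth noting that the only use the paper makes of this lemma is the ``In particular'' conclusion
\[
\sup_{\phi\in\mathcal{C}}\bigl(g(\phi)-h(\phi)\bigr)
=\sup_{\varphi\in C^1_s}\bigl(g(v\cdot D_x\varphi)-h(v\cdot D_x\varphi)\bigr),
\]
i.e.\ that the supremum over the closure agrees with the supremum over the dense generating set. That conclusion needs only the \emph{continuity} of $h$ on $C^0_\gamma$, already supplied by Lemma~\ref{continuity}, not uniform continuity. So your proof dispatches the lemma as literally stated, while the application the paper actually needs survives via the weaker result.
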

\begin{proof}
In fact, given $\varepsilon>0$, if  $\|\phi_{1} -\phi_{2}  \|_\gamma < \varepsilon$, that is,
$ -\varepsilon \gamma(v) < \phi_{1}(v,w) -\phi_{2}(v,w) < \varepsilon \gamma(v) $, for all $(v,w)$, then
$$ | S(\phi_{1}) -S(\phi_{2}) |< \left\lbrace \inf \gamma(v) \right\rbrace \varepsilon.$$
In particular $$\sup_{\phi \in \mathcal{C} } g(\phi) - h(\phi) = \sup_{
\begin{matrix}
^{\phi= v \cdot D_{x}\varphi(0, \omega)} \\
^{\varphi \in C^{1}_{s} } \\
\end{matrix}
} g(\phi) - h(\phi).$$
\end{proof}

\section{Proof of Proposition \ref{solviscalpha} }
\label{Appendix3}

\begin{proof}[Proof of Proposition \ref{solviscalpha}]
We must to prove that the function $u_{\alpha}$ is stationary. 
Since $L \geq 0$,  $u_{\alpha}$ is well defined as an infimum. On the other hand the stationarity
is an easy consequence of the correspondence between the set of all globally Lipschitz trajectories with initial condition $x(0)=x$ and the set of all globally Lipschitz trajectories with initial condition $y(0)=0$, given by, $\lbrace x(t) \rbrace \to \lbrace y(t)=x(t)-x \rbrace$.
In fact,
\begin{align*}
u_{\alpha}(0,\tau_{x}\omega)&=\inf_{y(0)=0} \int_{0}^{+\infty} e^{- \alpha t} L(y(t), \dot{y}(t), \tau_{x}\omega) dt 
\\&=
\inf_{x(0)=x} \int_{0}^{+\infty} e^{- \alpha t} L((x(t)-x)+x, \dot{x}(t), \omega) dt =u_{\alpha}(x, \omega).
\end{align*}

In order to prove that $u_{\alpha}$ is a viscosity solution in $\omega$,
let  $\varphi : \mathbb{R}^{n} \times \Omega \to \mathbb{R}$  be a stationary function such that $u_{\alpha}(0, \omega)-\varphi(0, \omega)$ has a local minimum (resp. maximum) in $\omega_{\varphi} \in \Omega$ and $u_{\alpha}(0, \omega_{\varphi})-\varphi(0, \omega_{\varphi})=0$. 

Consider a trajectory satisfying $x(0)=0$ such that $x(t)$  is a finite time minimizing, globally Lipschitz trajectory,  for the dynamic programing principle 
\eqref{DynProgPrinciple}, that is, 
\begin{equation}
u_{\alpha}(0, \omega_{\varphi})=   \int_{0}^{T} e^{- \alpha t} L(x(t), \dot{x}(t), \omega_{\varphi}) dt + 
e^{- \alpha T} u_{\alpha}(x(T), \omega_{\varphi}), \label{(1b)}
\end{equation}
for $T$ small enough.

Suppose that $\mathcal{H}(\varphi , \omega_{\varphi}) < 0$, by continuity there is a neighborhood $B$ of $\omega_{\varphi}$ in $\Omega$ and $\delta >0$ such that
$\mathcal{H}(\varphi , \omega ) < - \delta $
for all $\omega \in B$. Since $\mathcal{H}(\varphi , \omega )= H(0, D_{x}\varphi(0, \omega)  , \omega) + \alpha \varphi (0, \omega)$ we have
$-v D_{x}\varphi(0, \omega) - L(0, v, \omega) + \alpha \varphi (0, \omega) < - \delta,$
for all $\omega \in B$ and $v \in \mathbb{R}^{n}$. 
If we choose $v=\dot{x}(t)$ and $\omega= \tau_{x(t)} \omega_{\varphi}$ then
$$\dot{x}(t) D_{x}\varphi(0 , \tau_{x(t)} \omega_{\varphi}) +L(x(t), \dot{x}(t), \omega_{\varphi}) -  \alpha \varphi (x(t), \omega_{\varphi}) >  \delta,$$
for $0<t<T$.

Integrating this expression and using $\frac{d \,}{dt}\varphi (x(t), \omega)= \dot{x}(t) D_{x} \varphi (0,\tau_{x(t)} \omega)$ we get,
$$ \varphi(0, \tau_{x(T)}\omega_{\varphi}) - \varphi(0, \omega_{\varphi}) + \int_{0}^{T} L(x(t), \dot{x}(t), \omega_{\varphi}) dt -  \alpha \int_{0}^{T}  \varphi (x(t), \omega_{\varphi}) dt >  \delta T.$$
Since $u_{\alpha}(0, \omega) \geq \varphi(0, \omega)$ in $B$ and $u_{\alpha}(0, \omega_{\varphi})=\varphi(0, \omega_{\varphi})$, we have
$$ u_{\alpha}(0, \tau_{x(T)}\omega_{\varphi}) - u_{\alpha}(0, \omega_{\varphi}) + \int_{0}^{T} L(x(t), \dot{x}(t), \omega_{\varphi}) dt -  \alpha \int_{0}^{T}  \varphi (x(t), \omega_{\varphi}) dt >  \delta T.$$
Using \eqref{(1b)}  in the last inequality we get,
$$(1-e^{- \alpha T}) u_{\alpha}(0, \tau_{x(T)}\omega_{\varphi}) +  \int_{0}^{T} (1-e^{- \alpha t}) L(x(t), \dot{x}(t), \omega_{\varphi}) dt -  \alpha \int_{0}^{T}  \varphi (x(t), \omega_{\varphi}) dt >  \delta T.$$
Writing 
$$ u_{\alpha}(0, \tau_{x(T)}\omega_{\varphi}) +\frac{T}{(1-e^{- \alpha T})} \frac{1}{T} \int_{0}^{T} (1-e^{- \alpha t}) L(x(t), \dot{x}(t), \omega_{\varphi}) dt - $$ $$ \alpha \frac{T}{(1-e^{- \alpha T})} \frac{1}{T}  \int_{0}^{T}  \varphi (x(t), \omega_{\varphi}) dt >  \delta \frac{T}{1-e^{- \alpha T}}$$
and using $\displaystyle \lim_{T \to 0} \frac{T}{1-e^{- \alpha T}} =\frac{1}{\alpha}$, we get
$$ u_{\alpha}(0,  \omega_{\varphi}) - \varphi (0, \omega_{\varphi})  > \frac{ \delta}{\alpha} $$
contradicting $u_{\alpha}(0, \omega_{\varphi})=\varphi(0, \omega_{\varphi})$.

The proof for the maximum case is analogous and so the theorem is proved.
\end{proof}

\section{Proof of Theorem \ref{LipschitzOfViscSol}}
\label{apC}

In this last appendix we give a proof of Theorem \ref{LipschitzOfViscSol}. Before that we need to establish some additional results. We 
note here that we will be using the techniques in \cite{EGom1} (see also \cite{BG}) adapted to the stationary setting.

\begin{remark}\label{SolViscLipschitz}  
Let $u_{\alpha}$ be a viscosity solution in $\omega$ of \eqref{DiscHJE}, then, because it is also a viscosity solution in $x$ (Proposition \ref{ViscSolEquiv}) and it is Lipschitz,
$D_{x}u_{\alpha}(0, \tau_{y}\omega)$ is defined Lebesgue almost everywhere and
$$ H(y, D_{x}u_{\alpha}(y, \omega), \omega) + \alpha u_{\alpha}(y, \omega)=0,$$
for Lebesgue almost everywhere $y \in \mathbb{R}^{n}$. 
\end {remark}

For any probability  measure  $\mu$, we can define a new measure of probability $\tilde{\mu}$ in  $\mathbb{R}^{n} \times \Omega$ given by, 
$$\int_{\mathbb{R}^{n} \times \Omega}  \psi(p, \omega)  d\tilde{\mu}(p, \omega) = \int_{\mathbb{R}^{n} \times \Omega}  \psi(- D_vL (0, v, \omega) , \omega)  d\mu(v, \omega) .$$
In this case, the integral holonomy constraint can be rewritten as
$$\int_{\mathbb{R}^{n} \times \Omega}  D_pH(0, p, \omega) \cdot D_{x}\varphi(0, \omega) d\tilde{\mu}(p, \omega)=0,$$ $\forall  \varphi \in C^{1}_{s}(\mathbb{R}^{n}\times\Omega)$.

\begin{theorem}\label{DifViscSolut}  
Let $\mu_{\alpha}$ be a holonomic discounted stationary Mather measure. Denote the projection in the coordinate $\omega$ of $\mu_\alpha$ by
 $\theta_\alpha$, that is 
 \[
 \int_{\Omega} \varphi(\omega) d\theta_\alpha=\int_{\Rr^n\times \Omega} \varphi(\omega) d\mu_\alpha.
 \] 
 If $u_{\alpha}$ is a viscosity solution of \eqref{DiscHJE},  then $D_{x}u_{\alpha}(0, \omega)$ exists $\theta_\alpha$-a.e,
 and $\tilde{\mu}_{\alpha}$-a.e,  $p=-D_{x}u_{\alpha}(0, \omega)$.
\end {theorem}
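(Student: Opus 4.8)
The plan is to show that $\mu_\alpha$-almost every $(v,\omega)$ satisfies $v = -D_pH(0, D_xu_\alpha(0,\omega),\omega)$, and to extract from this both the existence of $D_xu_\alpha(0,\omega)$ at $\theta_\alpha$-a.e.\ $\omega$ and the identity $p = -D_xu_\alpha(0,\omega)$ in the $\tilde\mu_\alpha$ picture. The argument is a verification-type argument against the viscosity solution, exactly in the spirit of the formal computation in Section \ref{sfc}. Since $u_\alpha$ is a viscosity subsolution of $\mathcal{H}^\alpha(u,0,\omega)=0$ in $\omega$ (Proposition \ref{solviscalpha}), and is Lipschitz, I would first record the pointwise inequality that holds wherever $u_\alpha$ is differentiable in $x$, namely (by Remark \ref{SolViscLipschitz}) that $H(0, D_xu_\alpha(0,\tau_y\omega),\tau_y\omega) + \alpha u_\alpha(0,\tau_y\omega) = 0$ for Lebesgue a.e.\ $y$; combined with the convolution smoothing of Remark \ref{viscsolsmooth} this gives, after passing to the limit, the inequality
\[
L(0,v,\omega) + v\cdot D_xu_\alpha(0,\omega) + \alpha u_\alpha(0,\omega) \ge 0
\]
for $\mu_\alpha$-a.e.\ $(v,\omega)$, with equality exactly when $v = -D_pH(0,D_xu_\alpha(0,\omega),\omega)$, since the Fenchel inequality $L(0,v,\omega) \ge -p\cdot v - H(0,p,\omega)$ is strict off the conjugacy relation by strict convexity of $L$.

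Next I would integrate this inequality against $\mu_\alpha$. Using that $\mu_\alpha$ is holonomic — so $\int v\cdot D_x\varphi(0,\omega)\,d\mu_\alpha = 0$ and, for the smoothed solution $u^\varepsilon = u_\alpha * \eta_\varepsilon \in C^1_s$, $\int v\cdot D_xu^\varepsilon(0,\omega)\,d\mu_\alpha = 0$ — and that $\int \varphi(0,\omega)\,d\mu_\alpha = \int \varphi(0,\omega)\,d\nu_\alpha$ (holonomic discounted measure with trace $\nu_\alpha$), the integral of the middle term vanishes in the limit $\varepsilon\to 0$, and one is left with
\[
\int_{\Rr^n\times\Omega} L(0,v,\omega)\,d\mu_\alpha + \alpha\int_\Omega u_\alpha(0,\omega)\,d\nu_\alpha \ge 0,
\]
which by Corollary \ref{CritValueDiscLambda=0} reads $\bar H_\alpha = \alpha\int u_\alpha\,d\nu_\alpha$, hence the integral of the nonnegative quantity $L(0,v,\omega)+v\cdot D_xu_\alpha(0,\omega)+\alpha u_\alpha(0,\omega)$ is $\le 0$ and therefore $=0$. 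Consequently the integrand vanishes $\mu_\alpha$-a.e., i.e.\ $v = -D_pH(0,D_xu_\alpha(0,\omega),\omega)$ $\mu_\alpha$-a.e.; in particular $D_xu_\alpha(0,\omega)$ is defined $\theta_\alpha$-a.e.\ (it is defined precisely on the projection of the full-measure set), and pushing forward through $v\mapsto(-D_vL(0,v,\omega),\omega)$, which on $\supp\mu_\alpha$ is the change of variables $p = -D_vL(0,v,\omega) \Leftrightarrow v = D_pH(0,p,\omega)$, yields $p = -D_xu_\alpha(0,\omega)$ $\tilde\mu_\alpha$-a.e.

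The main obstacle is the first step: making rigorous the passage from the viscosity subsolution property (which only controls test functions touching $u_\alpha$ from below) to the pointwise Fenchel inequality for $\mu_\alpha$-a.e.\ $(v,\omega)$. The clean route is to avoid touching functions entirely and use the a.e.-in-$x$ equation from Remark \ref{SolViscLipschitz} together with the mollified functions $u^\varepsilon$: one has $H(0, D_xu^\varepsilon(0,\omega),\omega)+\alpha u^\varepsilon(0,\omega) \le o(1)$ uniformly (convexity of $H$ plus the uniform Lipschitz bound on $u_\alpha$, as already used in the proof of Proposition \ref{CritValueDiscounted}), hence $L(0,v,\omega)+v\cdot D_xu^\varepsilon(0,\omega)+\alpha u^\varepsilon(0,\omega) \ge -o(1)$ pointwise by Fenchel's inequality; integrating against $\mu_\alpha$ and sending $\varepsilon\to 0$ gives the integral inequality directly, and the $\mu_\alpha$-a.e.\ pointwise identity then follows from the integral equality as above. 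One must also check that $D_xu^\varepsilon(0,\omega) \to D_xu_\alpha(0,\omega)$ $\theta_\alpha$-a.e.\ along a subsequence; this is where one uses that $u_\alpha$ is Lipschitz hence differentiable Lebesgue-a.e.\ in $x$, that $\theta_\alpha$ projects (via the action) to sets of positive Lebesgue measure is not needed — rather, the argument is self-contained: the integral identity forces $v + D_pH(0,D_xu_\alpha(0,\omega),\omega)=0$ on a set of full $\mu_\alpha$-measure without needing a.e.\ convergence of $D_xu^\varepsilon$ on the support, since one only needs the limiting \emph{inequality} $\int L\,d\mu_\alpha + \alpha\int u_\alpha\,d\nu_\alpha \ge 0$, and the reverse is Corollary \ref{CritValueDiscLambda=0}. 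Once equality in the integral is known, differentiability of $u_\alpha$ at $\theta_\alpha$-a.e.\ point is extracted from the fact that $v$ is $\mu_\alpha$-a.e.\ equal to a Borel function of $\omega$ (the graph property, Theorem \ref{GraphPropMatherMeasures}) that must coincide with $-D_pH(0,D_xu_\alpha(0,\omega),\omega)$ wherever the latter exists, which by strict convexity and the almost-everywhere equation forces the existence at $\theta_\alpha$-a.e.\ $\omega$.
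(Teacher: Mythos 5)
Your verification-type argument correctly recovers the \emph{identification} half of the theorem: writing the Fenchel inequality with the mollified gradient $D_{x}u_{\alpha}^{\varepsilon}(0,\omega)$, integrating against $\mu_\alpha$, and using the undiscounted holonomy of the holonomic discounted measure, the trace property, the minimality $\int L\,d\mu_\alpha=\Hh_\alpha$, and Corollary \ref{CritValueDiscLambda=0}, forces the (up to $o(1)$) nonnegative integrand to vanish in $L^1(\mu_\alpha)$; strict convexity then pins down every limit point of $D_{x}u_{\alpha}^{\varepsilon}(0,\omega)$ as $-D_vL(0,v,\omega)$, $\mu_\alpha$-a.e. (Watch the signs: since $\mathcal{H}^\alpha(u,0,\omega)=0$ means $-H=\alpha u$, the pointwise inequality is $L+v\cdot D_{x}u_{\alpha}^{\varepsilon}-\alpha u_{\alpha}^{\varepsilon}\geq -o(1)$ and the integral identity is $\int L\,d\mu_\alpha-\alpha\int u_\alpha\,d\nu_\alpha=0$, not the versions you wrote; this is cosmetic.) This part is in substance the same mechanism as the paper's inequality \eqref{dois}.

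The genuine gap is the other half of the statement: the \emph{existence} of $D_{x}u_{\alpha}(0,\omega)$ for $\theta_\alpha$-a.e.\ $\omega$. Your closing argument (graph property plus ``must coincide with $-D_pH(0,D_{x}u_{\alpha}(0,\omega),\omega)$ wherever the latter exists'') is circular: it gives no lower bound on the $\theta_\alpha$-measure of the set where the derivative exists. Rademacher yields differentiability only Lebesgue-a.e.\ in $x$, and $\theta_\alpha$ may be singular (even atomic), so nothing transfers to the point $x=0$ for $\theta_\alpha$-typical $\omega$; moreover, convergence of the mollified gradients $D_{x}u_{\alpha}^{\varepsilon}(0,\omega)$ does \emph{not} imply differentiability at $0$ (for $u(x)=|x|$ and a symmetric mollifier one has $D_xu^\varepsilon(0)=0$ for every $\varepsilon$, yet $u$ is not differentiable at $0$). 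This is precisely what the paper's proof supplies and your proposal lacks: the strict convexity inequality is applied at the shifted points $\tau_y\omega$, where $D_{x}u_{\alpha}$ does exist for Lebesgue-a.e.\ $y$, \emph{before} mollifying, which produces the extra quadratic remainder $\beta^{\varepsilon}(\omega)=\int \tfrac{\gamma}{4}\,|D_{x}u_{\alpha}(0,\tau_{y}\omega)-D_{x}u_{\alpha}^{\varepsilon}(0,\tau_{y}\omega)|^{2}\,\eta^{\varepsilon}(y)\,dy$; combining \eqref{um} and \eqref{dois} with holonomy and $\Hh_\alpha=\alpha\int u_\alpha\,d\nu_\alpha$ gives $\int \tfrac{\gamma}{2}|D_{x}u_{\alpha}^{\varepsilon}(0,\omega)-p|^{2}\,d\tilde\mu_\alpha+\int\beta^{\varepsilon}\,d\theta_\alpha=o(1)$. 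The vanishing of $\beta^{\varepsilon}$ controls the mean-square oscillation of the true gradient on shrinking balls around $x=0$, and for a Lipschitz function this, together with the convergence of $D_{x}u_{\alpha}^{\varepsilon}(0,\omega)$, does yield differentiability at $0$, $\theta_\alpha$-a.e. Your proof would need to add this (or an equivalent) oscillation estimate; the Fenchel/verification bookkeeping alone does not provide it.
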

\begin{proof}
By the strict uniform continuity of $H$ there exists $\gamma >0$  such that for any $p, q, y  \in \mathbb{R}^{n}$ and $\omega \in \Omega$ we have 
$$H(0, p, \tau_{y}\omega ) \geq H(0, q, \tau_{y}\omega )  + D_pH(0, q, \tau_{y} \omega) (p-q) + \frac{\gamma}{2} |p-q|^{2}.$$
Let $u^{\varepsilon}=u* \eta$,  by Remark \ref{SolViscLipschitz}, for almost every $\omega$ and $y$, let $p=D_{x}u_{\alpha}(0, \tau_{y}\omega) $ and $q=D_{x}u_{\alpha}^{\varepsilon}(0, \omega)$. Then 
\begin{align*}
&H(0, D_{x}u_{\alpha}(0, \tau_{y}\omega) , \tau_{y}\omega ) \geq H(0, D_{x}u_{\alpha}^{\varepsilon}(0, \omega), \tau_{y}\omega )  \\&\quad +
 D_pH(0, D_{x}u_{\alpha}^{\varepsilon}(0, \omega), \tau_{y} \omega) (D_{x}u_{\alpha}(0, \tau_{y}\omega)-D_{x}u_{\alpha}^{\varepsilon}(0, \omega))+ \frac{\gamma}{2} |D_{x}u_{\alpha}(0, \tau_{y}\omega)-D_{x}u_{\alpha}^{\varepsilon}(0, \omega)|^{2}.\end{align*}
Multiplying by $\eta^{\varepsilon}(y)$ and integrating we get
\begin{align*}
&\int_{\mathbb{R}^{n}} H(0, D_{x}u_{\alpha}^{\varepsilon}(0, \omega), \tau_{y}\omega ) \eta^{\varepsilon}(y) dy  + \int_{\mathbb{R}^{n}} \frac{\gamma}{2} |D_{x}u_{\alpha}(0, \tau_{y}\omega)-D_{x}u_{\alpha}^{\varepsilon}(0, \omega)|^{2} \eta^{\varepsilon}(y) dy \\ &\leq 
\int_{\mathbb{R}^{n}} H(0, D_{x}u_{\alpha}(0, \tau_{y}\omega), \tau_{y}\omega ) \eta^{\varepsilon}(y) dy  \\ &+  \int_{\mathbb{R}^{n}} D_p H(0, D_{x}u_{\alpha}^{\varepsilon}(0, \omega), \tau_{y} \omega)  \left[ D_{x}u_{\alpha}^{\varepsilon}(0, \omega)-D_{x}u_{\alpha}(0, \tau_{y}\omega)\right]  \eta^{\varepsilon}(y) dy.
\end{align*}
Remark \ref{SolViscLipschitz} implies that, $ H(y, D_{x}u_{\alpha}(y, \omega), \omega) = - \alpha u_{\alpha}(y, \omega)$  almost everywhere $y$. Thus 
\begin{equation}
\label{um}
\int_{\mathbb{R}^{n}} H(0, D_{x}u_{\alpha}^{\varepsilon}(0, \tau_{y}\omega), \tau_{y}\omega ) \eta^{\varepsilon}(y) dy  + \beta^{\varepsilon}(\omega)  \leq   - \alpha u_{\alpha}^{\varepsilon}(0, \omega) +  o_{\omega}(\varepsilon)
\end{equation}
where
$$\beta^{\varepsilon}(\omega)=\int_{\mathbb{R}^{n}} \frac{\gamma}{4} |D_{x}u_{\alpha}(0, \tau_{y}\omega)-D_{x}u_{\alpha}^{\varepsilon}(0, \tau_{y}\omega)|^{2} \eta^{\varepsilon}(y) dy.$$

On the other hand, the convexity of $H$, implies that,
\begin{align}
\notag
&\int_{\mathbb{R}^{n} \times \Omega} \frac{\gamma}{2} |D_{x}u_{\alpha}^{\varepsilon}(0, \omega)- p|^{2} d\tilde{\mu}_{\alpha}(p, \omega)\\ \notag
&\leq  \int_{\mathbb{R}^{n} \times \Omega} \left[H(0, D_{x}u_{\alpha}^{\varepsilon}(0, \omega) , \omega ) -  H(0, p, \omega )  -  D_pH(0, p,  \omega) (D_{x}u_{\alpha}^{\varepsilon}(0, \omega)- p) \right]d\tilde{\mu}_{\alpha}(p, \omega)\\\notag &=  \int_{\mathbb{R}^{n} \times \Omega} H(0, D_{x}u_{\alpha}^{\varepsilon}(0, \omega) , \omega ) d\tilde{\mu}_{\alpha}(p, \omega) \\\notag&-\int_{\mathbb{R}^{n} \times \Omega} \left[ H(0, p, \omega ) +  D_pH(0, p,  \omega) D_{x}u_{\alpha}^{\varepsilon}(0, \omega)- D_pH(0, p,  \omega) p \right]  d\tilde{\mu}_{\alpha}(p, \omega) \\\notag &=  \int_{\mathbb{R}^{n} \times \Omega} H(0, D_{x}u_{\alpha}^{\varepsilon}(0, \omega) , \omega ) + L(0,- D_pH(0, p, \omega), \omega) d\tilde{\mu}_{\alpha}(p, \omega) \\\label{dois}&  =  \int_{\mathbb{R}^{n} \times \Omega} H(0, D_{x}u_{\alpha}^{\varepsilon}(0, \omega) , \omega )  d\tilde{\mu}_{\alpha}(p, \omega) +  \bar{H}_{\alpha}. 
\end{align}

Integrating \eqref{um} with respect to $\tilde{\mu}$ and adding \eqref{dois}, we get
$$ \int_{\mathbb{R}^{n} \times \Omega}  \frac{\gamma}{2} |D_{x}u_{\alpha}^{\varepsilon}(0, \omega)- p|^{2} d\tilde{\mu}_{\alpha}(p, \omega) + \int_{\Omega} \beta^{\varepsilon}(\omega) d\theta(\omega) < o(\varepsilon).$$
So, $\theta_\alpha$ almost everywhere we have $\displaystyle D_{x}u_{\alpha}(0, \omega) = \lim_{\varepsilon \to 0} D_{x}u_{\alpha}^{\varepsilon}(0,  \omega)$, in particular $p=D_{x}u_{\alpha}(0, \omega)$ in the support of $\tilde{\mu}$.
\end{proof}


%
%
%

\begin{theorem}\label{SemicOfViscSol}
Let $\mu_{\alpha}$ be a holonomic Mather measure for the discounted stationary Mather problem.
If $u_{\alpha}$ is a viscosity solution of \eqref{DiscHJE}, then for each $h \in \mathbb{R}$, 
$$| u_{\alpha}(h, \omega)   - 2  u_{\alpha}(0, \omega) + u_{\alpha}(-h, \omega) | \leq  C  | h |^{2},$$
 $\theta$ almost everywhere.
\end {theorem}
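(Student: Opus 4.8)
The inequality splits into an upper and a lower bound, which I would prove by completely different arguments. The upper bound $u_{\alpha}(h,\omega)-2u_{\alpha}(0,\omega)+u_{\alpha}(-h,\omega)\le C|h|^{2}$ holds for \emph{every} $\omega$ and every $h$, with $C$ independent of $\alpha\in(0,1]$: it is the semiconcavity in $x$ of the value function. One compares $u_{\alpha}(\pm h,\omega)$ with the cost along the trajectory optimal for $u_{\alpha}(0,\omega)$ perturbed, over a fixed time interval, by an affine interpolation of the displacement $\pm h$; the second-order Taylor remainder of $L$ (finite because $D^{2}L$ is bounded on the bounded region swept by the optimal trajectory, by stationarity and compactness of $\Omega$) produces the factor $|h|^{2}$, and since the weight $e^{-\alpha t}\le1$ only improves the estimate, the constant is uniform in $\alpha$. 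This is the semiconcavity property already invoked in Section~\ref{sfc}; see also \cite{Bardi,BG}.

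The lower bound $u_{\alpha}(h,\omega)-2u_{\alpha}(0,\omega)+u_{\alpha}(-h,\omega)\ge -C|h|^{2}$ for $\theta$-a.e.\ $\omega$ (here $\theta$ denotes the $\omega$-projection of $\mu_{\alpha}$, which coincides with its trace since $\mu_{\alpha}$ is holonomic) is the substantial part, and I would establish it as the rigorous counterpart of the $L^{\infty}$ estimate for $u_{xx}$ carried out formally in Section~\ref{sfc}, with second-order difference quotients replacing second derivatives. Fix a unit vector $e$ and mollify $u_{\alpha}$ to $u_{\alpha}^{\varepsilon}=u_{\alpha}*\eta^{\varepsilon}\in C^{1}_{s}$. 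Starting from the equation $H(y,D_{x}u_{\alpha}(y,\omega),\omega)+\alpha u_{\alpha}(y,\omega)=0$, which holds for a.e.\ $y$ and all $\omega$ by Remark~\ref{SolViscLipschitz}, I would form its second difference in $y$ at $y=0$ with step $he$; expanding the Hamiltonian terms around $(0,D_{x}u_{\alpha}^{\varepsilon}(0,\omega))$ and using the uniform convexity of $H$ in $p$ exactly as in the proof of Theorem~\ref{DifViscSolut}, one reaches, $\theta$-a.e.\ and up to an $o_{\omega}(\varepsilon)$ error, an inequality of the schematic form
\[
D_{p}H\bigl(0,D_{x}u_{\alpha}^{\varepsilon}(0,\omega),\omega\bigr)\cdot \Delta^{2}_{he}D_{x}u_{\alpha}^{\varepsilon}(0,\omega)\;+\;\gamma\,\bigl|\delta^{\varepsilon}(\omega)\bigr|^{2}\;+\;\alpha\,\bigl(u_{\alpha}(he,\omega)-2u_{\alpha}(0,\omega)+u_{\alpha}(-he,\omega)\bigr)\;\le\;C\,|h|^{2},
\]
where $\Delta^{2}_{he}D_{x}u_{\alpha}^{\varepsilon}(0,\omega)$ is the second difference of $D_{x}u_{\alpha}^{\varepsilon}$ at $y=0$, where $\delta^{\varepsilon}(\omega)$ is a regularized discrete second derivative of $u_{\alpha}$ at $\omega$ (the quantity playing the role of $u_{xx}$), and where $C$ depends only on the data and on the uniform bounds for $D_{x}u_{\alpha}$ and $\alpha u_{\alpha}$, not on $\alpha$ or $h$; the linear terms coming from $D_{xp}H$ and $D_{xx}H$, as well as the term $\alpha(u_{\alpha}(he,\omega)-2u_{\alpha}(0,\omega)+u_{\alpha}(-he,\omega))$, are absorbed into the good term $\gamma|\delta^{\varepsilon}|^{2}$ by Young's inequality, precisely as $\alpha u_{xx}$ is controlled in Section~\ref{sfc} via $-\alpha u_{xx}\le\tfrac12u_{xx}^{2}+\tfrac12\alpha^{2}$.

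To pass from this to a pointwise estimate I would imitate the truncation device of Section~\ref{sfc}: for $\lambda>0$ pick $\Psi_{\lambda}\in C^{\infty}$ with $\Psi_{\lambda}'=1$ on $(-\infty,-\lambda]$, $\Psi_{\lambda}'=0$ on $[-\lambda/2,\infty)$ and $0\le\Psi_{\lambda}'\le1$, multiply the displayed inequality by $\Psi_{\lambda}'(\delta^{\varepsilon}(\omega))$ and integrate against $\mu_{\alpha}$. Because $\mu_{\alpha}$ is invariant under the discounted Euler--Lagrange flow and holonomic, the transport term $\int D_{p}H\cdot\Delta^{2}_{he}D_{x}u_{\alpha}^{\varepsilon}\,\Psi_{\lambda}'(\delta^{\varepsilon})\,d\mu_{\alpha}$ is, up to $o_{\omega}(\varepsilon)$, the integral of $\tfrac{d}{dt}\bigl[\Psi_{\lambda}\circ(\delta^{\varepsilon}\text{ transported along the flow})\bigr]$ and hence vanishes as $\varepsilon\to0$; this is the rigorous form of the identity $\int u_{x}(u_{xx})_{x}\Psi'(u_{xx})\,d\theta=0$. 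Letting $\varepsilon\to0$ --- here Theorem~\ref{DifViscSolut}, which provides $D_{x}u_{\alpha}^{\varepsilon}(0,\omega)\to D_{x}u_{\alpha}(0,\omega)$ both $\theta$-a.e.\ and in $L^{2}(\tilde\mu_{\alpha})$, is exactly what forces the mollification errors to disappear --- and using $\Psi_{\lambda}'\ge0$ together with the bound on $\alpha u_{\alpha}$, I would arrive at an inequality of the form $(\gamma\lambda^{2}-C)\,\theta\bigl(\{\delta\le-\lambda\}\bigr)\le0$, so that, choosing $\lambda$ with $\gamma\lambda^{2}>C$, the discrete second derivative satisfies $\delta\ge-\lambda$ $\theta$-a.e.; since $u_{\alpha}(he,\omega)-2u_{\alpha}(0,\omega)+u_{\alpha}(-he,\omega)$ is controlled by this together with the semiconcavity bound from above, this yields the lower bound.

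The hard part will be making the ``differentiate the Hamilton--Jacobi equation twice and integrate against $\mu_{\alpha}$'' step of Section~\ref{sfc} rigorous: every derivative must be read as a difference quotient of $u_{\alpha}^{\varepsilon}$, and all the error terms generated must be shown to be $o_{\omega}(\varepsilon)$ in $L^{1}(\theta)$ --- this is where the uniform Lipschitz bound for $u_{\alpha}$, the convexity of $H$, and Theorem~\ref{DifViscSolut} enter; the discrete second derivative $\delta$ appearing in the convexity estimate has to be tied carefully to the second difference of $u_{\alpha}$ itself; the vanishing of the transport term must be deduced from flow-invariance and the holonomy constraint applied to cutoff-modified mollifications of $u_{\alpha}$ rather than to $u_{\alpha}$ directly; and every constant must be traced to be independent of $\alpha$, with the $\alpha u_{\alpha}$ contributions handled as in Section~\ref{sfc}. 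The complete argument follows \cite{EGom1} (see also \cite{BG}) almost verbatim.
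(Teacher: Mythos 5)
Your outline matches the paper's proof essentially step for step: the upper bound is the uniform-in-$\alpha$ semiconcavity of $u_\alpha$, and the lower bound is obtained by mollifying the shifted solutions $u_\alpha(\cdot\pm h,\omega)$, using the uniform convexity of $H$, killing the transport term through the holonomy constraint applied to the stationary test function $\Psi\bigl(\beta^{\varepsilon}/|h|^{2}\bigr)$ (with the velocity identified with $-D_pH(0,D_xu_\alpha(0,\omega),\omega)$ on $\supp\mu_\alpha$ via Theorem \ref{DifViscSolut}; flow invariance is not actually needed for this), and then running the truncation argument with $\Psi'$ supported in $\{s\le-\lambda\}$, keeping all constants independent of $\alpha$. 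The one step you flag but leave open --- converting the squared gradient differences produced by convexity into control of the second difference of $u_\alpha$ itself --- is closed in the paper by a second use of semiconcavity: on $A_\lambda=\{\beta^{\varepsilon}(0,\omega)\le-\lambda|h|^{2}\}$ concavity of $\tilde u_\alpha^{\varepsilon}-\tfrac{\gamma}{2}|x|^{2}$ forces $|D_x\tilde u_\alpha^{\varepsilon}(0,\omega)-D_x\hat u_\alpha^{\varepsilon}(0,\omega)|\ge(\tfrac{\lambda}{2}+\gamma)|h|$, so the good quadratic term grows like $\lambda^{2}|h|^{2}\theta(A_\lambda)$ and dominates both the $\alpha$-term and the right-hand side as $\lambda\to\infty$, yielding $\theta(A_\lambda)=0$ for $\lambda$ large.
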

\begin{proof}
If $h \neq 0$ then we define,
$$\tilde{u}_{\alpha}(x, \omega)=u_{\alpha}(x+h, \omega)  \text{ and }  \hat{u}_{\alpha}(x, \omega)=u_{\alpha}(x-h, \omega),$$
and
$ \tilde{u}_{\alpha}^{\varepsilon}(x, \omega)  \text{ and }  \hat{u}_{\alpha}^{\varepsilon}(x, \omega),$
the corresponding smoothings (see Remark \ref{viscsolsmooth}).

Remember that 
$$H(h, D_{x}\tilde{u}_{\alpha}^{\varepsilon}(0, \omega),  \omega ) + \alpha \tilde{u}_{\alpha}^{\varepsilon}(0, \omega) \leq c \varepsilon,$$ 
and
$$H(-h, D_{x}\hat{u}_{\alpha}^{\varepsilon}(0, \omega),  \omega ) + \alpha \hat{u}_{\alpha}^{\varepsilon}(0, \omega) \leq c \varepsilon.$$ 
Thus,
\begin{align}\notag
H(0, & D_{x}\tilde{u}_{\alpha}^{\varepsilon}(0, \omega) , \omega ) - 2  H(0, D_{x}u_{\alpha}(0, \omega), \omega ) + H(0, D_{x}\hat{u}_{\alpha}^{\varepsilon}(0, \omega) , \omega )
\\\notag
=&H(0, D_{x}\tilde{u}_{\alpha}^{\varepsilon}(0, \omega) , \omega ) - H(h, D_{x}\tilde{u}_{\alpha}^{\varepsilon}(0, \omega) , \omega )+ H(h, D_{x}\tilde{u}_{\alpha}^{\varepsilon}(0, \omega) , \omega ) + \alpha \tilde{u}_{\alpha}^{\varepsilon}(0, \omega)
\\\notag& -  \alpha \tilde{u}_{\alpha}^{\varepsilon}(0, \omega)
+ 2   \alpha u_{\alpha}(0, \omega)  -  \alpha \hat{u}_{\alpha}^{\varepsilon}(0, \omega)
\\\notag& +  \alpha \hat{u}_{\alpha}^{\varepsilon}(0, \omega) + H(-h, D_{x}\hat{u}_{\alpha}^{\varepsilon}(0, \omega) , \omega )
- H(-h, D_{x}\hat{u}_{\alpha}^{\varepsilon}(0, \omega) , \omega )+
H(0, D_{x}\hat{u}_{\alpha}^{\varepsilon}(0, \omega) , \omega )
\\ \notag 
\leq &2 c  \varepsilon  - \alpha \left(  \tilde{u}_{\alpha}^{\varepsilon}(0, \omega)
- 2     u_{\alpha}(0, \omega)  +    \hat{u}_{\alpha}^{\varepsilon}(0, \omega) \right)
\\\label{uno}& - 
\left( D_xH(0, D_{x}\tilde{u}_{\alpha}^{\varepsilon}(0, \omega) , \omega ) - D_xH(0, D_{x}\hat{u}_{\alpha}^{\varepsilon}(0, \omega) , \omega )\right) h + O(|h|^2).
\end{align}  
On the other hand the convexity of $H$ implies that
\begin{align*}
H(0, D_{x}\tilde{u}_{\alpha}^{\varepsilon}(0, \omega) , \omega ) \geq &H(0, D_{x}u_{\alpha}(0, \omega), \omega ) \\ & +  D_pH(0, D_{x}u_{\alpha}(0, \omega),   \omega) (D_{x}\tilde{u}_{\alpha}^{\varepsilon}(0,  \omega) - D_{x}u_{\alpha}(0,  \omega))\\& +  \frac{\gamma}{2} |D_{x}\tilde{u}_{\alpha}^{\varepsilon}(0,  \omega) - D_{x}u_{\alpha}(0,  \omega)|^{2},
\end{align*}
and
\begin{align*}
H(0, D_{x}\hat{u}_{\alpha}^{\varepsilon}(0, \omega) , \omega ) \geq &H(0, D_{x}u_{\alpha}(0, \omega), \omega ) \\& + D_pH(0, D_{x}u_{\alpha}(0, \omega),   \omega) (D_{x}\hat{u}_{\alpha}^{\varepsilon}(0,  \omega) - D_{x}u_{\alpha}(0,  \omega))\\& +  \frac{\gamma}{2} |D_{x}\hat{u}_{\alpha}^{\varepsilon}(0,  \omega) - D_{x}u_{\alpha}(0,  \omega)|^{2}.
\end{align*}

Adding these two formulas we obtain the following inequality:
\begin{align*}
&\frac{\gamma}{2} \left(  |D_{x}\tilde{u}_{\alpha}^{\varepsilon}(0,  \omega) - D_{x}u_{\alpha}(0,  \omega)|^{2} +  |D_{x}\hat{u}_{\alpha}^{\varepsilon}(0,  \omega) - D_{x}u_{\alpha}(0,  \omega)|^{2} \right) \\  &+ D_pH(0, D_{x}u_{\alpha}(0, \omega),   \omega) (D_{x}\tilde{u}_{\alpha}^{\varepsilon}(0,  \omega) - 2 D_{x}u_{\alpha}(0,  \omega) + D_{x}\hat{u}_{\alpha}^{\varepsilon}(0,  \omega)) \\&\leq 
\left( H(0, D_{x}\tilde{u}_{\alpha}^{\varepsilon}(0, \omega) , \omega ) - 2  H(0, D_{x}u_{\alpha}(0, \omega), \omega ) + H(0, D_{x}\hat{u}_{\alpha}^{\varepsilon}(0, \omega) , \omega ) \right) .
\end{align*}
By \eqref{uno} we have,
\begin{align*}
& \frac{\gamma}{2} \left(  |D_{x}\tilde{u}_{\alpha}^{\varepsilon}(0,  \omega) - D_{x}u_{\alpha}(0,  \omega)|^{2} +  |D_{x}\hat{u}_{\alpha}^{\varepsilon}(0,  \omega) - D_{x}u_{\alpha}(0,  \omega)|^{2} \right) \\ & +  D_pH(0, D_{x}u_{\alpha}(0, \omega),   \omega) (D_{x}\tilde{u}_{\alpha}^{\varepsilon}(0,  \omega) - 2 D_{x}u_{\alpha}(0,  \omega) + D_{x}\hat{u}_{\alpha}^{\varepsilon}(0,  \omega)) \\ &\leq 2 c  \varepsilon  - \alpha \left(  \tilde{u}_{\alpha}^{\varepsilon}(0, \omega)
- 2     u_{\alpha}(0, \omega)  +    \hat{u}_{\alpha}^{\varepsilon}(0, \omega) \right)\\ & -
\left( D_xH(0, D_{x}\tilde{u}_{\alpha}^{\varepsilon}(0, \omega) , \omega ) - D_xH(0, D_{x}\hat{u}_{\alpha}^{\varepsilon}(0, \omega) , \omega )\right) h+O(|h|^2).
\end{align*}  
Or equivalently,
\begin{align}
\notag
&\notag\frac{\gamma}{2} \left(  |D_{x}\tilde{u}_{\alpha}^{\varepsilon}(0,  \omega) - D_{x}u_{\alpha}(0,  \omega)|^{2} +  |D_{x}\hat{u}_{\alpha}^{\varepsilon}(0,  \omega) - D_{x}u_{\alpha}(0,  \omega)|^{2} \right)  \\ &\notag + D_pH(0, D_{x}u_{\alpha}(0, \omega),   \omega) (D_{x}\tilde{u}_{\alpha}^{\varepsilon}(0,  \omega) - 2 D_{x}u_{\alpha}(0,  \omega) + D_{x}\hat{u}_{\alpha}^{\varepsilon}(0,  \omega))\\&\notag +\alpha \left(  \tilde{u}_{\alpha}^{\varepsilon}(0, \omega) - 2     u_{\alpha}(0, \omega)  +    \hat{u}_{\alpha}^{\varepsilon}(0, \omega) \right)\\&\label{two2} \leq 2 c  \varepsilon  -  \left( D_xH(0, D_{x}\tilde{u}_{\alpha}^{\varepsilon}(0, \omega) , \omega ) - D_xH(0, D_{x}\hat{u}_{\alpha}^{\varepsilon}(0, \omega) , \omega )\right) (h)+O(|h|^2).
\end{align}  
Define, $\beta^{\varepsilon}(x, \omega)=  \tilde{u}_{\alpha}^{\varepsilon}(x, \omega) - 2     u_{\alpha}(x, \omega)  + \hat{u}_{\alpha}^{\varepsilon}(x, \omega)$, so \eqref{two2} can be rewritten as
\begin{align}
\notag
& \frac{\gamma}{2} \left(  |D_{x}\tilde{u}_{\alpha}^{\varepsilon}(0,  \omega) - D_{x}u_{\alpha}(0,  \omega)|^{2} +  |D_{x}\hat{u}_{\alpha}^{\varepsilon}(0,  \omega) - D_{x}u_{\alpha}(0,  \omega)|^{2} \right) \\&\notag  +  D_pH(0, D_{x}u_{\alpha}(0, \omega),   \omega)  D_{x} \beta^{\varepsilon}(0, \omega)   + \alpha \beta^{\varepsilon}(0, \omega) \\&\label{three3}  \leq  2 c  \varepsilon  -  \left( D_xH(0, D_{x}\tilde{u}_{\alpha}^{\varepsilon}(0, \omega) , \omega ) - D_xH(0, D_{x}\hat{u}_{\alpha}^{\varepsilon}(0, \omega) , \omega )\right) ( h)+O(|h|^2)
\end{align}
Applying the inequality
\begin{align*}
\|
&\left(
D_x H(0, D_{x}\tilde{u}_{\alpha}^{\varepsilon}(0, \omega) , \omega ) - D_xH(0, D_{x}\hat{u}_{\alpha}^{\varepsilon}(0, \omega) , \omega ) 
\right) h\|
 \\\leq  &\| D^2{px}H(0, D_{x}\tilde{u}_{\alpha}^{\varepsilon}(0, \omega) , \omega ) \| \cdot | D_{x}\tilde{u}_{\alpha}^{\varepsilon}(0, \omega) - D_{x}\hat{u}_{\alpha}^{\varepsilon}(0, \omega) | \cdot  |h| \\\leq & \frac{\gamma}{4} | D_{x}\tilde{u}_{\alpha}^{\varepsilon}(0, \omega) - D_{x}\hat{u}_{\alpha}^{\varepsilon}(0, \omega) |^{2} + \frac{1}{\gamma} | D_{x}\tilde{u}_{\alpha}^{\varepsilon}(0, \omega) - D_{x}\hat{u}_{\alpha}^{\varepsilon}(0, \omega) |^{2} \cdot  |h|^{2} \\\leq& \frac{\gamma}{4} \left(  |D_{x}\tilde{u}_{\alpha}^{\varepsilon}(0,  \omega) - D_{x}u_{\alpha}(0,  \omega)|^{2} +  |D_{x}\hat{u}_{\alpha}^{\varepsilon}(0,  \omega) - D_{x}u_{\alpha}(0,  \omega)|^{2} \right) \\& +  \frac{1}{\gamma} | D_{x}\tilde{u}_{\alpha}^{\varepsilon}(0, \omega) - D_{x}\hat{u}_{\alpha}^{\varepsilon}(0, \omega) |^{2} \cdot  |h|^{2},
\end{align*}
to \eqref{three3} we get, 
\begin{align}\notag & \frac{\gamma}{4} \left(  |D_{x}\tilde{u}_{\alpha}^{\varepsilon}(0,  \omega) - D_{x}u_{\alpha}(0,  \omega)|^{2} +  |D_{x}\hat{u}_{\alpha}^{\varepsilon}(0,  \omega) - D_{x}u_{\alpha}(0,  \omega)|^{2} \right)  \\&\label{quatro} +D_pH(0, D_{x}u_{\alpha}(0, \omega),   \omega)  D_{x} \beta^{\varepsilon}(0, \omega)   + \alpha \beta^{\varepsilon}(0, \omega)  \leq 2 C  ( \varepsilon   +  |h|^{2}). 
\end{align}
Consider a function $\Psi:  \mathbb{R} \to  \mathbb{R}$, such that  $\Phi (s)=\Psi'(s) \geq 0$. We can multiply \eqref{quatro}
 by $\Phi \left(\frac{\beta^{\varepsilon}(0, \omega)}{|h|^{2}}\right)$ and integrate with respect to $\theta$,
\begin{align}
&\notag\int_{ \Omega}  \frac{\gamma}{4} \left(  |D_{x}\tilde{u}_{\alpha}^{\varepsilon}(0,  \omega) - D_{x}u_{\alpha}(0,  \omega)|^{2} +  |D_{x}\hat{u}_{\alpha}^{\varepsilon}(0,  \omega) - D_{x}u_{\alpha}(0,  \omega)|^{2} \right)\Phi \left(\frac{\beta^{\varepsilon}(0, \omega)}{|h|^{2}}\right) d\theta \\\notag & +\int_{ \Omega} D_pH(0, D_{x}u_{\alpha}(0, \omega),   \omega)  D_{x} \beta^{\varepsilon}(0, \omega)\Phi \left(\frac{\beta^{\varepsilon}(0, \omega)}{|h|^{2}}\right)  d\theta \\ &\label{cinco} +\int_{ \Omega} \alpha \beta^{\varepsilon}(0, \omega) \Phi \left(\frac{\beta^{\varepsilon}(0, \omega)}{|h|^{2}}\right) d\theta \leq 2 C  ( \varepsilon   +  |h|^{2})\int_{ \Omega} \Phi \left(\frac{\beta^{\varepsilon}(0, \omega)}{|h|^{2}}\right) d\theta.
\end{align}
We have
\begin{align*}
&
 |h|^{2} \int_{ \Omega} D_pH(0, D_{x}u_{\alpha}(0, \omega),   \omega) \frac{1}{ |h|^{2}} D_{x} \beta^{\varepsilon}(0, \omega)\Phi \left(\frac{\beta^{\varepsilon}(0, \omega)}{|h|^{2}}\right)  d\theta\\ &
 =\int_{ \Omega} D_pH(0, D_{x}u_{\alpha}(0, \omega),   \omega)  D_{x} \Psi \left(\frac{\beta^{\varepsilon}}{|h|^{2}}\right)(0, \omega)   d\theta=0.
 \end{align*}
Thus, \eqref{cinco} can be restated as,
\begin{align}
\notag &\int_{ \Omega}   |D_{x}\tilde{u}_{\alpha}^{\varepsilon}(0,  \omega) - D_{x}\hat{u}_{\alpha}^{\varepsilon}(0,  \omega)|^{2} \Phi \left(\frac{\beta^{\varepsilon}(0, \omega)}{|h|^{2}}\right) d\theta \\& +   \int_{ \Omega} \alpha \beta^{\varepsilon}(0, \omega) \Phi \left(\frac{\beta^{\varepsilon}(0, \omega)}{|h|^{2}}\right) d\theta \leq 2 C  ( \varepsilon   +  |h|^{2})\int_{ \Omega} \Phi \left(\frac{\beta^{\varepsilon}(0, \omega)}{|h|^{2}}\right) d\theta. 
\label{seis}
\end{align}
Define, $A_{\lambda}=\{\omega | \frac{\beta^{\varepsilon}(0, \omega)}{|h|^{2}} \leq -\lambda \}$, and consider the  function $\Psi $ defined by
$$\Psi(s)=
\begin{cases}
s  & \text{if} \; s \leq -\lambda,  \\
1  & \text{otherwise}.
\end{cases}
$$

Fix a positive constant $\gamma$ such that the functions $\bar{u}_{\alpha} (x,  \omega)= \tilde{u}_{\alpha} (x,  \omega) - \frac{\gamma}{2} |x|^{2}$ and $\bar{u}_{\alpha}^{\varepsilon}(x,  \omega)= \tilde{u}_{\alpha}^{\varepsilon}(x,  \omega) - \frac{\gamma}{2} |x|^{2}$ are concave. Observe that a point $\omega$ is in $A_{\lambda}$ only if 
$$\bar{u}_{\alpha}^{\varepsilon}(h,  \omega) -2 \bar{u}_{\alpha} (0,  \omega) + \bar{u}_{\alpha}^{\varepsilon}(-h,  \omega) \leq -(\lambda + \gamma) |h|^{2}.$$
Define $F^{\varepsilon}(t)=\bar{u}_{\alpha}^{\varepsilon}(t \frac{h}{|h|},  \omega)$. Since $F^{\varepsilon}$ is concave and $(F^{\varepsilon})''  \leq 0$ we have 
$$\bar{u}_{\alpha}^{\varepsilon}(h,  \omega) -2 \bar{u}_{\alpha}^{\varepsilon} (0,  \omega) + \bar{u}_{\alpha}^{\varepsilon}(-h,  \omega)  \geq ( D_{x}\bar{u}_{\alpha}^{\varepsilon}(h,  \omega) - D_{x}\bar{u}_{\alpha}^{\varepsilon}(-h,  \omega) ) h.$$
Subtracting this inequalities we get,
$$(\lambda + \gamma) |h|^{2} \leq  2 | \bar{u}_{\alpha}^{\varepsilon} (0,  \omega) - \bar{u}_{\alpha}  (0,  \omega) | +  | D_{x}\bar{u}_{\alpha}^{\varepsilon}(h,  \omega) - D_{x}\bar{u}_{\alpha}^{\varepsilon}(-h,  \omega) | \, | h|.$$

Since $u_{\alpha}$ is stationary and uniformly Lipschitz continuous we have 
$| \bar{u}_{\alpha}^{\varepsilon} (0,  \omega) - \bar{u}_{\alpha}  (0,  \omega) | \leq C  \varepsilon $. thus we can choose $\varepsilon $ in such way that 
$$ | D_{x}\bar{u}_{\alpha}^{\varepsilon}(h,  \omega) - D_{x}\bar{u}_{\alpha}^{\varepsilon}(-h,  \omega) | \geq  (\frac{\lambda }{2} + \gamma) |h|$$
and
$$ | D_{x}u_{\alpha}^{\varepsilon}(h,  \omega) - D_{x}u_{\alpha}^{\varepsilon}(-h,  \omega) | \geq  (\frac{\lambda }{2} + \gamma) |h|.$$

Using this estimates in (6) we get
$$ (\frac{\lambda }{2} + \gamma)^{2} |h|^{2} \theta(A_{\lambda}) -  \alpha  \lambda |h|^{2} \theta(A_{\lambda})  \leq 2 C  ( \varepsilon   +  |h|^{2}) \theta(A_{\lambda}).$$
Observe that, if $\theta(A_{\lambda}) > 0$ then the left hand side of this inequality converges to $+ \infty$ when $\lambda \to + \infty$, so there exists a value $\lambda_{0}$ such that $\theta(A_{\lambda}) = 0$, that is, $-\lambda_{0} |h|^{2} \leq \tilde{u}_{\alpha}^{\varepsilon}(x, \omega) - 2     u_{\alpha}(x, \omega)  + \hat{u}_{\alpha}^{\varepsilon}(x, \omega)$, $\theta$ almost everywhere. The upper bound comes from the semiconcavity of $u_{\alpha}$. Thus there exists $C >0$ such that $| u_{\alpha}(h, \omega)   - 2  u_{\alpha}(0, \omega) + u_{\alpha}(-h, \omega) | \leq  C  | h |^{2} $, $\theta$ almost everywhere, which completes the proof of the theorem. 
\end {proof}

\begin{proof}[Proof of Theorem \ref{LipschitzOfViscSol}]
Let $\theta$ be the projection of $\mu_{\alpha}$. By Theorem \ref{DifViscSolut}, $D_{x}u_{\alpha}(0, \omega)$ exists $\theta$-a.e.  On the other hand, fixed $\omega \in \supp \theta$, $D_{x}u_{\alpha}(y, \omega)$ exists Lebesgue almost everywhere. 

We claim that 
$$| u_{\alpha}(y, \omega)   -  u_{\alpha}(0, \omega) + u_{\alpha}(-y, \omega)| \leq  C  | y |^{2}.$$
This claim is a consequence of Theorem \ref{SemicOfViscSol}, by choosing $h=y$ and of the semi-concavity of $u_{\alpha}$. In fact, we have
\begin{equation}
\label{ape1}
- C  | h |^{2} \leq u_{\alpha}(y, \omega)   - 2  u_{\alpha}(0, \omega) + u_{\alpha}(-y, \omega)  \leq  C  | h |^{2},
\end{equation}
\begin{equation}
\label{ape2}
 u_{\alpha}(y, \omega)   -  u_{\alpha}(0, \omega) -  D_{x}u_{\alpha}(0, \omega) y  \leq  C  | y |^{2},
\end{equation}
and
\begin{equation}
\label{ape3}
 u_{\alpha}(-y, \omega)   -  u_{\alpha}(0, \omega) + D_{x}u_{\alpha}(0, \omega) y  \leq  C  | y |^{2}. 
 \end{equation}
The claim is obtained from \eqref{ape2} and from the difference between \eqref{ape1} and \eqref{ape3}.

Let $z \in \mathbb{R}$ be a point such that $|z| \leq 2 |y|$.
The semi-concavity of $u_{\alpha}$ implies that, 
\begin{equation}
\label{ape4}
u_{\alpha}(z, \omega)   \leq  u_{\alpha}(y, \omega) + D_{x}u_{\alpha}(y, \omega) (z-y)  +  C  | z-y |^{2}.
\end{equation}
Using, $u_{\alpha}(z, \omega)  = u_{\alpha}(0, \omega) + D_{x}u_{\alpha}(0, \omega) z  +  o(| z|^{2})$ and $u_{\alpha}(y, \omega)  = u_{\alpha}(0, \omega) + D_{x}u_{\alpha}(0, \omega) y  +   o(| y|^{2})$ in (4) we get
\begin{equation}
\label{ape5}
 (D_{x}u_{\alpha}(0, \omega) -D_{x}u_{\alpha}(y, \omega))(z-y)  \leq   C  | y |^{2}.  
 \end{equation}
If we take $z=y + |y| \frac{D_{x}u_{\alpha}(0, \omega) -D_{x}u_{\alpha}(y, \omega)}{|D_{x}u_{\alpha}(0, \omega) -D_{x}u_{\alpha}(y, \omega)|}$ then we obtain $| D_{x}u_{\alpha}(y, \omega)   -  D_{x}u_{\alpha}(0, \omega) | \leq  C  | y |$.
\end{proof}


\bibliographystyle{alpha}

\bibliography{mainbibliography}

\end{document}